\newtheorem{theorem}{Theorem}
\newtheorem{thm}[theorem]{Theorem}
\numberwithin{theorem}{section}
\newtheorem{lemma}[theorem]{Lemma}
\newtheorem{cor}[theorem]{Corollary}
\newtheorem{conj}[theorem]{Conjecture}
\newcommand{\Z}{\mathbb{Z}}
\newcommand{\R}{\mathbb{R}}
\newcommand{\C}{\mathbb{C}}
\newcommand{\G}{\mathcal{G}}
\newcommand{\T}{\mathbb{T}}
\newcommand{\No}{\mathcal{N}}
\newcommand{\be}{\begin{equation}}
\newcommand{\ee}{\end{equation}}
\newcommand{\SL}{\text{SL}_2(\C)}
\newcommand{\SU}{\text{SU}_2}
\newcommand{\Zv}{\mathcal{Z}}
\newcommand{\Zvunc}{\mathcal{Z}_{\text{unc}}}
\newcommand{\old}[1]{}
\newcommand{\GL}{{\text GL}_2(\C)}
\newcommand{\Zunc}{Z_{\mathrm{unc}}}
\newcommand{\Punc}{\mathrm{Pr}({\mathrm{unc}})}
\renewcommand{\Pr}{\mathrm{Pr}}
\newcommand{\p}{\mathcal{P}}
\newcommand{\Qdet}{\mathrm{qdet}}
\newcommand{\Tr}{\mathrm{Tr}}
\newcommand{\disk}[3]{\raisebox{-14bp}{\mbox{\hspace{2pt}

\begin{document}
\title{The Laplacian on planar graphs and graphs on surfaces}
\author{R. Kenyon\thanks{Brown University, 151 Thayer St., Providence, RI 02912. Research supported by the NSF.}}
\date{}
\maketitle

\tableofcontents
\section{Introduction}

By a {\bf resistor network}, or simply {\bf network}, we mean here
a graph $\G=(V,E)$ with vertices $V$
and (undirected) edges $E$, and $c$ a positive real-valued
function on the edges. The value $c_e$ is the {\bf conductance} of the edge $e$. 

The {\bf Laplacian} on a network $(\G,c)$ is the linear operator
$\Delta\colon\R^V\to\R^V$ defined by
\be\label{Lapdef}
(\Delta f)(v) =  \sum_{v'\sim v}c_{vv'}(f(v)-f(v')).\ee
Here the sum is over neighbors $v'$ of $v$.

The Laplacian is the most basic operator on a network, 
and has countless uses in all areas of mathematics.
We study here some specific properties of the
Laplacian on networks embedded on surfaces,
and in particular on the simplest surfaces: the plane, the annulus, the torus.

Our goals will be to focus on a few topological, probabilistic and combinatorial applications. Specifically, our three main interconnected goals are to discuss: 
\begin{enumerate}
\item The discrete EIT (Electrical Impedance Tomography) problem: reconstructing the network from boundary measurements. For planar networks, the 
classification due to Yves Colin de Verdi\`ere of Dirichlet-to-Neumann operators
\cite{CdV,CGV},
\item
The connections with the random spanning tree model developed by Curtis, Ingerman and Morrow \cite{CIM}
and Wilson and myself \cite{KW1, KW2}, 
and 
\item 
The characteristic polynomial of $\Delta$ on an annulus and torus
based on joint work with Okounkov and Sheffield \cite{KOS}, Okounkov \cite{KO}, and in \cite{Kenyon.bundle}.
\end{enumerate}
\bigskip

\noindent{\bf Acknowledgements.}
Large parts of section \ref{CPN} were developed by Colin de Verdi\`ere \cite{CdV} and Curtis, Ingerman and Morrow
\cite{CIM}. I thanks them for explanations and discussions.

The rest of this paper includes material developed in various projects jointly with Alexander Goncharov, Andrei Okounkov, Scott Sheffield, and David Wilson. In particular some
of the material is lifted directly out of work in progress with David Wilson \cite{KW4},
whom I thank for allowing it to be included here.

The main new results in the current paper are Lemma \ref{sequence}
and Theorems \ref{minimizable}, \ref{distinct}, and \ref{topequivtorus}.

\section{Background}
\subsection{Electrical Impedance Tomography}
The classical {\bf Calder\'on problem} is to determine the electrical conductivity
of a region in $\R^2$ or $\R^3$ 
by making boundary measurements of the type:
fix the voltage along the boundary and measure the resulting current flow
out of the region. 

This problem has concrete applications in medical imaging,
nondestructive materials testing, etc. In these settings it is called
Electrical Impedance Tomography (EIT). It is widely used because it is easy and cheap to implement:
basically it is very easy to apply a low voltage
to pretty much any conductive material and measure the resulting current flow.

In this paper we study the \emph{discrete version} of EIT. Given a 
resistor network (a finite graph in which every edge is a resistor),
understand its internal structure from boundary measurements.
There is a very nice solution for planar networks. For networks
on more general surfaces we are beginning to have a better understanding
but much more remains to be done.

For a network $\G$ on a surface with nontrivial topology there is closely related question,
even when there is no boundary, which is to understand the determinant of the
``bundle Laplacian" as a function on the moduli space of flat connections
on bundles on $\G$. 

We shall see how these questions are related below. 

\subsection{Spanning trees}
One of the combinatorial tools involved in the study of discrete EIT 
is the model of (random)  {\bf spanning
trees}, and related objects called {\bf cycle-rooted spanning forests}
on networks.
Kirchhoff \cite{Kirchhoff} was the first to see
the connection between spanning trees and the Laplacian on a network:
he showed that the Laplacian determinant in fact counts spanning trees.
 
The random spanning tree measure (or UST, for Uniform Spanning Tree)
has for the past $20$ years been a remarkably successful 
and rich area of study in probability theory.
The random spanning tree as a probability model goes back to at least
work of Temperley \cite{Temperley}, who gave a bijection between
spanning trees on the square grid and domino tilings.
Aldous and Broder \cite{Al,Br} in the 1980s showed a
connection between spanning trees and random walk,
giving an algorithm for sampling a UST based on 
the simple random walk on the network.
Pemantle \cite{Pemantle}
showed that the unique path between two points in 
a UST has the same distribution as the 
{\bf loop-erased
random walk} (LERW) between those two points\footnote{The
loop-erased random walk 
between $a$ and $b$ is defined as follows:
talk a simple random walk from $a$ stopped when it reaches $b$,
and then erase from the trace all loops in chronological order.
What remains is a simple path from $a$ to $b$.}. 
Wilson \cite{Wilson}
extended this to give a quick and elegant method of sampling
a uniform spanning tree in any graph.
Burton and Pemantle \cite{BP}
proved that the edges of the uniform spanning tree form 
a ``determinantal process'', that is there is a matrix $T$
indexed by the edges such that the probability that some set of edges
$\{e_1,\dots,e_k\}$ is in the UST is $\det(T(e_i,e_j)_{1\le i,j\le k})$. 
The matrix $T$ is built from the Green's function of $\Delta$.

The UST on the grid $\Z^2$ received particular attention due to 
the conformal invariance properties of its scaling limit
(limit when the mesh size of the grid goes to zero). 
In particular in \cite{K.ci} we showed that the ``winding field"
of the UST on $\Z^2$ (which measure the winding of the branches around faces) 
converges in the scaling limit
to the Gaussian free field, a conformally invariant random function
on $\R^2$.
In \cite{LSW} Lawler, Schramm and Werner showed the convergence of the
branches of the UST on $\Z^2$ in the scaling limit
to the conformally invariant process $\text{SLE}_2$.

As mentioned above, Temperley \cite{Temperley} 
gave a bijection between spanning trees and the dimer
model on the square grid; this was later generalized to arbitrary
graphs in \cite{KPW} and \cite{KS}, pointing to a very close relation
between the UST and the dimer model. Many of the results discussed
below are inspired by the corresponding results in the dimer model.

\subsection{Characteristic polynomials}

To a network on an annulus we can associate a polynomial $P(z)$ 
which is the determinant of the line-bundle Laplacian $\Delta$ 
for a flat line bundle with monodromy $z\in\C^*$ around a generator
of the homology of the annulus. 
One can think of this as the determinant of the Laplacian acting 
on the space of $z$-quasiperiodic functions: ``locally defined" functions whose values are multiplied by $z$
under analytic continuation counterclockwise around the annulus. 

The polynomial $P$ is real and symmetric 
(that is, $P(z)=P(1/z)$).
There is a nice characterization of those $P$ which occur:
they are those with positive constant coefficient, and whose roots are 
real, positive and distinct except for a double root at $z=1$. 
See Theorem \ref{distinct} below.

For a network on the torus one can associate a 
two-variable polynomial $P(z_1,z_2)$ in a similar way:
it is the determinant of the Laplacian for a flat line bundle with monodromy
$z_1,z_2$ around generators of $\pi_1$ of the torus.
This polynomial plays
an important role in understanding the structure and large-scale
properties of the random spanning tree model on a periodic planar graph
\cite{BP, KOS}.
Like in the case of the annulus there is a characterization of those $P(z_1,z_2)$
which occur: 
the curve $\{P(z_1,z_2)=0\}$ is a \emph{Harnack
curve} which is symmetric $(P(z_1,z_2)=P(z_1^{-1},z_2^{-1})$),
satisfies $P(1,1)=0$ and has positive constant coefficient. 
See the definition of Harnack curve in section \ref{Harnackdef}.
One can show that in fact any Harnack curve satisfying the above properties
arises from the characteristic polynomial
of some toral network. 

A more recent development in networks on the torus 
is the understanding of the integrable
dynamical system underlying them. 
Here the dynamical system is not related to the flow of current
in the network, but is a dynamical system on the space of conductances
assigned to the edges in the network. It is the restriction of an
integrable Hamiltonian
dynamical system on a larger space (the space of line bundles on
bipartite networks on the torus)
to a Lagrangian subvariety. See \cite{GK}. We will not discuss this here,
mostly because we don't yet have a very good understanding
of this system except as an invariant subvariety of a larger system.

For networks on other surfaces $\Sigma$
one can also define a characteristic polynomial $P=\det\Delta$
which is a regular function on the representation variety of $\pi_1(\Sigma)$
into $\SL$. Here much less is known about the structure of $P$. See 
however Theorem \ref{generalCRSF} below.

Finally, one would like to consider a network on a general surface with boundary,
and understand properties of the boundary measurement map
(for the appropriate bundle Laplacian) and to what extent it determines the network. We are still far from this goal but the cases of simple
topology or small boundary hint that there
is an interesting answer.

\section{Networks}

By a {\bf network} we mean a connected 
finite graph $\G=(V,E)$ with a positive real-valued function $c$ on the edges called the {\bf conductance}
function, and a usually nonempty set of vertices $\No$ called 
{\bf nodes} or {\bf terminals} or {\bf boundary vertices}.
These are the simplest type of electrical networks, with resistors only.

The nodes are indexed from $1$ to $n$.
Let $I$ be the set of non-node vertices; elements of $I$ are
referred to as {\bf internal vertices}.

\subsection{Laplacian}

The Laplacian on a network $\G$ is the operator $\Delta:\R^V\to\R^V$ defined for
$f\in\R^V$ by (\ref{Lapdef}). 
Since $\R^V$ has a natural basis, consisting of functions $\delta_v$
which are $1$ at $v$ and zero elsewhere, we often write $\Delta$ as a matrix
$\Delta=(\Delta_{vv'})_{v,v'\in V}$ where
$\Delta_{vv'}=-c_{vv'}$ if $v,v'$ are adjacent and the diagonal entries are
$\Delta_{vv}=\sum_{v'\sim v} c_{vv'}.$ See for example (\ref{Lapex})
which is the Laplacian of the graph of Figure \ref{Ynetwork}.

Associated to the Laplacian is the
quadratic form $Q$ on $\R^V$ defined as
\be\label{qf}
Q(f) = \langle f,\Delta f\rangle = \sum_{v\sim v'} c_{vv'}(f(v)-f(v'))^2
\ee
which is called the {\bf Dirchlet energy} of $f$. Here $\langle,\!\rangle$ is
the natural inner product in which the basis $\{\delta_v\}_{v\in V}$ is orthonormal.

From the expression (\ref{qf}) we see that $\Delta$ is positive semidefinite
with kernel consisting of the constant functions 
(we assume unless stated otherwise that $\G$ is connected).

Another useful operator is the {\bf incidence matrix} $d$, defined as follows.
Fix an (arbitrary) orientation for each edge $e$, so that $e_+$ and $e_-$
are its head and tail. 
Define $d\colon\R^V\to\R^E$  by the formula
$$df(e) = f(e_+)-f(e_-).$$
Then it is easy to verify that 
$$\Delta =d^*{\cal C}d,$$
where ${\cal C}$ is the diagonal matrix of conductances, and $d^*$ is the transpose
of $d$ (its adjoint for the inner product $\langle,\rangle$ above): 
for an element $w\in\R^E$ we have
$$d^*w(v) = \sum_{e\to v} w(e)$$
where the sum is over the edges $e$ containing endpoint $v$, and directed towards $v$. We can call $df$ the {\bf gradient} of the function $f$ and
$d^*w$ the {\bf divergence} of the flow $w$.

Note that we can write the Dirichlet energy as
$$Q(f)=\langle df,{\cal C}df\rangle$$
where here $\langle,\!\rangle$ is the natural inner product on $\R^E$.

\subsection{The Laplacian with boundary}

If $B\subset V$ is a subset of vertices
and $I=V\setminus B$, 
we can define an operator
$\Delta_B:\R^I\to\R^I$ called the {\bf Dirichlet Laplacian with boundary $B$}
by the same formula
$$\Delta_Bf(v) = \sum_{v'\sim v}c_{vv'}(f(v)-f(v')),$$
where here $v\in I$ and the sum is over all neighbors of $v$ in $V$
(not just those in $I$).

As a matrix $\Delta_B$ is just a submatrix of the full Laplacian $\Delta$:
the submatrix indexed by $I$.

\subsection{The Dirichlet problem}

A function $f\in\R^V$ is said to be {\bf harmonic at $v$} if $\Delta f(v)=0$.
By (\ref{Lapdef}) this simply means that $f(v)$ is a weighted average
of the values of $f$ at the neighboring vertices, where the weights
are proportional to the conductances. 

A function is {\bf harmonic} if it is harmonic at all vertices.
On a finite connected graph a harmonic function must be constant,
since the kernel of $\Delta$ consists of constant functions only.

Suppose $B$ is a nonempty set boundary vertices and $u$ is a function on $B$.
A function $f$ on $V$ which is harmonic on $I=V\setminus B$ and agrees with
$u$ on $B$ is said to be {\bf harmonic 
with Dirichlet boundary conditions on $B$}. Such functions satisfy
a {\bf maximum principle}: their maximum (and minimum) values occur on $B$
(because of the above observation about weighted
averages).

Given this fact it is not hard to see that, if $B$ is nonempty, then $\Delta_B$ is invertible:
any function $f$ in the kernel of $\Delta_B$ can be extended to a function on $V$ taking values zero on $B$; it is thus a harmonic function with Dirichlet boundary 
conditions at $B$. By
the maximum principle it must take its maximum
and minimum on $B$, where it has value $0$. Thus $f$ must be identically zero.

The {\bf Dirichlet problem} is the problem of
finding, for a given function $u$ on $B$, a function $f$ on $V$ which is
equal to $u$ on $B$ and harmonic on $I=V\setminus B$: find $f$
satisfying
\be\label{DP}
\begin{aligned}
\Delta f(v) =0 &\text{ for }v\in V\setminus B\\
f(v) = u(v) &\text{ for } v\in B.
\end{aligned}
\ee
The function $f$ is called the {\bf harmonic extension} of $u$. 
For finite networks the solution to the Dirichlet problem is straightforward
using invertibility of $\Delta_B$. Alternatively,
the solution $f$ is the function with boundary values $u$ and
which minimizes the Dirichlet energy $Q(f)$. Convexity of $Q$ implies that
there is a unique solution. 

A function on $\G$ which is harmonic on $I$ is a solution to the Dirichlet
problem with boundary $B$. Such a function is also called
an {\bf equilibrium potential}.

\subsection{The Dirichlet problem and the response matrix}

For a network we will take $\No$, the set of nodes, to play the role of the 
boundary vertices (which we called $B$ above). 
Given a function $u$ on $\No$, let $f_u$ be the harmonic
extension of $u$. Then $\Delta f_u(v)$, restricted to $\No$, is a (generally nonzero) function of $v\in\No$. 
The map $\Lambda:u\mapsto \Delta f_u|_\No$  is a linear 
map $\Lambda:\R^\No\to\R^\No$ called
the {\bf Dirichlet-to-Neumann map} of the network.
For certain conventional reasons we often prefer to work with $L=-\Lambda$
and refer to $L$ as the {\bf response matrix}. See the next section for an example.

We can describe $\Lambda$ as a matrix as follows.
Suppose we index the vertices so that vertices in $\No$ come first.
Then we can write $\Delta$ in block form as
$$\Delta = \left(\begin{matrix}A&B\\B^t&C\end{matrix}\right),$$
where $A$ has rows and columns indexed by the nodes $\No$
and $C$ has rows
and columns indexed by the internal vertices; $C$ is just the matrix
of the Dirichlet Laplacian $\Delta_\No$. 

The Dirichlet problem (\ref{DP}) can be written as follows: find $f_I$ on 
the internal vertices $I$ so that 
$$\left(\begin{matrix}A&B\\B^t&C\end{matrix}\right)
\left(\begin{matrix}u\\f_I\end{matrix}\right)=\left(\begin{matrix}g\\0\end{matrix}\right),$$ for a function $g$ which is the desired function of $u$.
We can split this into two linear equations
\begin{eqnarray}
Au+Bf_I &=& g\\B^t u+C f_I &=&0.
\end{eqnarray} 
This second equation can be solved as 
\be\label{f_I}f_I=-C^{-1}B^t u.\ee
(Here $C$ is invertible, as discussed above).

One can then plug in this value of $f_I$ in the first equation to get 
$$Au-BC^{-1}B^t u = g,$$ that is,
$\Lambda u=g$ where $\Lambda$ is the matrix $\Lambda=A-BC^{-1}B^t$. 
Thus
\be\label{responsedef}L=-A+BC^{-1}B^t.\ee

We see from this expression that $L$ is symmetric, since $A$ and $C$
are symmetric.
In fact $\Lambda$ is positive semidefinite (and $L$ is negative semidefinite),
with kernel consisting solely of the constant functions. 
This follows from the fact that 
$$\langle u,\Lambda u\rangle = \langle f,\Delta f\rangle\ge 0$$
where $f$ is the harmonic extension of $u$ (one can check this using (\ref{f_I})). 

\subsubsection{Example}

Let $\G$  be the network of Figure \ref{Ynetwork} with three nodes 
$\{v_1,v_2,v_3\}$,
one internal vertex $v_4$
\begin{figure}[htbp]
\center{\includegraphics[width=6cm]{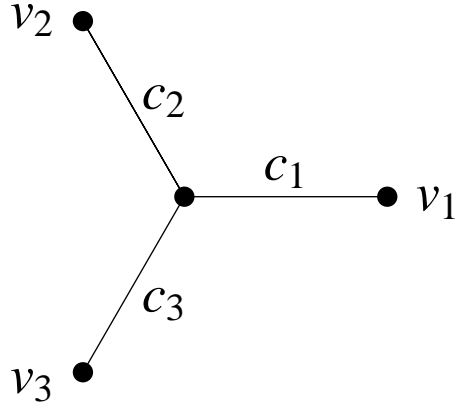}}
\caption{\label{Ynetwork}The `Y' network.}
\end{figure}
and three edges with conductances $c_1,c_2,c_3$.
We have
\be\label{Lapex}
\Delta = \begin{bmatrix}c_1&0&0&-c_1\\0&c_2&0&-c_2\\0&0&c_3&-c_3\\-c_1&-c_2&-c_3&c_1+c_2+c_3\end{bmatrix}.\ee
Using formula (\ref{responsedef}) we have
\begin{eqnarray}\nonumber
L&=&-\begin{pmatrix}c_1&0&0\\0&c_2&0\\0&0&c_3\end{pmatrix}+\begin{pmatrix}-c_1\\-c_2\\-c_3\end{pmatrix}(\frac1{c_1+c_2+c_3})\begin{pmatrix}-c_1&-c_2&-c_3\end{pmatrix}\\\label{Lexample}
&=&\begin{pmatrix}\frac{-c_1c_2-c_1c_3}{c_1+c_2+c_3}&\frac{c_1c_2}{c_1+c_2+c_3}&\frac{c_1c_3}{c_1+c_2+c_3}\\\frac{c_2c_1}{c_1+c_2+c_3}&\frac{-c_2c_1-c_2c_3}{c_1+c_2+c_3}&\frac{c_2c_3}{c_1+c_2+c_3}\\
\frac{c_3c_1}{c_1+c_2+c_3}&\frac{c_3c_2}{c_1+c_2+c_3}&
\frac{-c_3c_1-c_3c_2}{c_1+c_2+c_3}\end{pmatrix}.
\end{eqnarray}

\subsection{Electrical interpretation}

Associated to a function $f$ (called {\bf potential} or {\bf voltage}) 
on a network is a flow $\omega$ on the edges (called {\bf current})
defined by 
$$\omega(e)={\cal C}df(e)=c_e(f(e_+)-f(e_-)).$$
Here ${\cal C}$ is the diagonal matrix of edge conductances and 
$df(e)=f(e_+)-f(e_-)$ is the gradient of $f$.

Ohm's law states that ${\cal C}df(e)$ is the flow across edge $e$ when the vertices
are held at potentials $f(e_+)$ and $f(e_-)$, and the {\bf resistance} on the edge
is $1/c_e$, the reciprocal of the conductance.

Kirchhoff's network equations state that when the boundary 
nodes are held at fixed
potentials, the internal vertices will attain potentials $f$ in such a way
that the flow of current induced by Ohm's law in the network has the property that
the current entering an internal vertex equals the current exiting that vertex.
In other words the divergence $d^*\omega$ of the current
$\omega$ is zero at internal vertices.
Thus $d^*{\cal C}d(f)=0$ at every internal vertex, so $f$ is the harmonic 
extension of $u$.

This allows us to give $L_{ij}$ the following electrical interpretation:
Hold all nodes except $i$ at potential $0$, and node $i$ at potential $1$. Then
for the resulting current flow,
$L_{ij}$ is the current exiting the network at node $i$.
This current is positive: the potentials at internal vertices are in $[0,1]$
(in fact in $(0,1)$)
by the maximum principle for harmonic functions, 
so the current at edges adjacent to $j$ is directed towards $j$.
Similarly one shows $L_{ii}<0$.

\section{Circular planar networks}\label{CPN}

By a {\bf circular planar network} (CPN) we mean a connected 
finite graph $\G=(V,E)$ embedded in the plane,
with a positive real-valued function $c$ on the edges called the {\bf conductance}
function, and a nonempty set of vertices $\No$ on the outer face called 
{\bf nodes} or {\bf terminals}. 
(There may be other vertices on the outer face as well).
The reason it is a \emph{circular} planar network is that the network
can be drawn inside the disk with the nodes on the boundary circle. 

The nodes are ordered in counterclockwise order from $1$ to $n$.
Let $I$ be the set of non-node vertices; elements of $I$ are
referred to as {\bf internal vertices}.
The non-exterior faces of $\G$ are referred to as {\bf interior faces};
we consider $\G$ to also have $n$ {\bf exterior faces}, with the $i$th
exterior face comprising the region between nodes $i$ and $i+1$,
and having
boundary consisting of the edges along the ``outer face" of $\G$ between
node $i$ and $i+1$.  

\subsection{Conjugate harmonic function}

If $\G$ is a circular planar network,
a harmonic function $f$ on $\G$ with Dirichlet boundary conditions on $\No$
has a {\bf conjugate harmonic function} $g$
defined (up to an additive constant) on the faces of $\G$ by the condition
${\cal C}df = \partial g$, where $\partial$ is the difference
operator on the dual graph $\G'$: if $e=vv'$ is an edge of 
$\G$ and $F_1,F_2$ are the adjacent faces, so that $F_1$ is left of 
$e$ when $e$ is traversed from $v$ to $v'$, then 
\be\label{CR}c_e(f(v')-f(v))=g(F_1)-g(F_2).\ee
The harmonicity of $f$ implies that these equations are consistent
around a vertex; by summing along paths one sees that the value of $g$
is defined consistently on all faces, once one choose a value
of $g$ at any starting face.
Thus these equations define $g$ uniquely up to a global additive constant.
The function $g$ is harmonic for the operator $\partial^*{\cal C}^{-1}\partial$.
That is, one puts conductances on the dual edges which are the inverses
of the conductances on the primal edges.

The pair $(f,g)$ is sometimes written as $f+ig$ and is called a {\bf discrete 
analytic function} since the equations (\ref{CR}) play the role of 
the discrete Cauchy-Riemann equation.

If $f$ is harmonic with Dirichlet boundary conditions then $g$ will be
harmonic on $\G'$ with Dirichlet boundary conditions, that is,
$g$ is harmonic on interior vertices of $\G'$ (that is, on 
non-exterior faces of $\G$).

\subsection{Characterization of response matrices}

The response matrix is a fundamental matrix associated to a network.
A natural question is: what properties of the network are reflected in its response
matrix? 

The basic properties of $L$ shown in the previous section are that 
\begin{enumerate}\item $L$ is real and symmetric, 
\item $L$ is negative semidefinite, 
\item The kernel of $L$ consists of the constant functions, and
\item 
$L_{ij}>0$ if $i\ne j$, and $L_{ii}<0$. 
\end{enumerate} 
(It is not hard to see that property 2 is implied by the others).

Any matrix satisfying the above properties
is in fact the response matrix
of some (not necessarily planar) network: it suffices to take a complete graph
on $n$ vertices, declare that each vertex is a node, and put conductance
$L_{ij}$ on the edge $ij$ connecting vertices $i$ and $j$.

The question is much more interesting in the case of 
circular planar networks.
In this case there is a beautiful characterization of response matrices
in terms of minors, due to Colin de Verdi\`ere \cite{CdV}.

Given two disjoint subsets $A,B$ of the nodes of a circular planar network, 
we say that $A$ and $B$
are {\bf noninterlaced} if they are contained in disjoint intervals
of nodes with respect to the circular order. That is, if $a<b<c<d$ are nodes
then it is never the case that $a,c\in A$ and $b,d\in B$. 

\begin{thm}[\cite{CdV}]\label{CdVthm}
$L$ is the response matrix of a connected circular planar network if and only if
$L$ is symmetric, with kernel consisting of the constant functions,
and for any disjoint noninterlaced subsets of nodes $A$ and $B$ with $|A|=|B|$, we have
$\det L_A^B\ge 0$, where $L_A^B$ is the submatrix of $L$ whose
rows index $A$ and columns index $B$.
\end{thm}

There are two parts of this theorem, the necessity of the conditions
and the existence of a network for any $L$ satisfying the conditions. 
Both have interesting proofs which we will discuss. 
\old{Even though there
are an exponentially large set of inequalities in the condition,
in recent work \cite{KW4} we showed that $\begin{pmatrix}n\\2\end{pmatrix}$
of these inequalities imply the others. See Theorem \ref{centralminorthm} below.
}

We define a network to be {\bf well-connected} if all noninterlaced 
minors $\det L_A^B$ are \emph{strictly} positive.
In \cite{CdV} Colin de Verdi\`ere shows that the space of well-connected
circular planar networks on $n$ nodes is homeomorphic to an open ball
of dimension $\begin{pmatrix}n\\2\end{pmatrix}$.
More about this in section \ref{YDeltamoves}

\subsection{Groves}
As one can see from (\ref{responsedef}), the response matrix entries are rational functions
of the conductances. Surprisingly, the off-diagonal entries are {\bf subtraction free} rational functions
of the conductances, that is, ratios of polynomials with positive coefficients;
in fact the coefficients are all $1$ or $0$ (see for example (\ref{Lexample})).
This is a strong indication that these polynomials are counting some
type of combinatorial objects.

Indeed, that is one of the ideas in the Curtis-Ingerman-Morrow proof 
(see  \cite{CIM}) of 
necessity in Theorem \ref{CdVthm} above, which we discuss in this section.

\subsubsection{Spanning trees}
A {\bf spanning tree} of a graph is a subset of edges which is a tree,
that is, contains no cycles, and is spanning, that is, connects all vertices.
Equivalently one can say that a spanning tree is a maximal set of edges
(with respect to inclusion) that has no cycles.

For a network $\G$ with conductances $c$ on its edges, one can define
the {\bf weight} $w(T)$ of a spanning tree $T$ to be the product of the 
conductances of its edges, $w(T)=\prod_{e\in T}c_e$.

The most important theorem about spanning trees is 
that the Laplacian determinant is the weighted sum of spanning trees
(this is also called the {\bf partition sum}).
\begin{thm}[Matrix-Tree Theorem \cite{Kirchhoff}]\label{matrixtree}
Let $\det_0\Delta$ be the product of the nonzero eigenvalues (with multiplicity)
of $\Delta$ for a connected network $\G$. Then
$$\frac1{|\G|}{\det}_0\Delta = \sum_Tw(T).$$
Here the sum is over spanning trees of $\G$.
\end{thm}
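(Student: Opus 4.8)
The plan is to reduce the statement about the product of nonzero eigenvalues to the classical \emph{reduced Laplacian} form of the theorem, and then prove that form by combining the factorization $\Delta=d^*{\cal C}d$ with the Cauchy--Binet formula. Throughout write $n=|\G|$ for the number of vertices, and for a vertex $i$ let $\Delta^{(i)}$ denote the matrix obtained from $\Delta$ by deleting the $i$th row and column (this is the Dirichlet Laplacian $\Delta_{\{i\}}$ of the previous section, the submatrix indexed by $V\setminus\{i\}$).

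First I would establish the linear-algebra identity
\[
{\det}_0\Delta=\sum_{i=1}^n \det\Delta^{(i)}=n\,\det\Delta^{(1)}.
\]
Since $\G$ is connected, $\Delta$ is symmetric and positive semidefinite of rank $n-1$ with kernel spanned by the all-ones vector $\mathbf 1$. Hence its adjugate $\mathrm{adj}(\Delta)$ has rank one; as $\Delta\,\mathrm{adj}(\Delta)=\det(\Delta)\,I=0$, its columns lie in $\ker\Delta=\R\mathbf 1$, and symmetry forces $\mathrm{adj}(\Delta)=\gamma\,\mathbf 1\mathbf 1^t$ for a scalar $\gamma$. The diagonal entries of $\mathrm{adj}(\Delta)$ are exactly the principal cofactors $\det\Delta^{(i)}$, so these are all equal to $\gamma$. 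On the other hand, $\mathrm{tr}\,\mathrm{adj}(\Delta)=\sum_i\det\Delta^{(i)}$ equals the elementary symmetric function $e_{n-1}$ of the eigenvalues of $\Delta$; because one eigenvalue vanishes, $e_{n-1}$ collapses to the product of the $n-1$ nonzero eigenvalues, namely ${\det}_0\Delta$. This gives the displayed identity and reduces everything to showing $\det\Delta^{(1)}=\sum_T w(T)$.

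For the reduced statement I would use $\Delta=d^*{\cal C}d$. Deleting the row and column of a fixed node $v_0$ gives $\Delta^{(0)}=d_0^*{\cal C}d_0$, where $d_0$ is $d$ with the column indexed by $v_0$ removed. Writing ${\cal C}={\cal C}^{1/2}{\cal C}^{1/2}$ and applying Cauchy--Binet to the product of the $(n-1)\times|E|$ and $|E|\times(n-1)$ factors yields
\[
\det\Delta^{(0)}=\sum_{\substack{S\subseteq E\\ |S|=n-1}}(\det B_S)^2\prod_{e\in S}c_e,
\]
where $B_S$ is the $(n-1)\times(n-1)$ submatrix of $d_0$ with rows indexed by the edges of $S$ and columns by $V\setminus\{v_0\}$, i.e.\ the reduced incidence matrix of the subgraph with edge set $S$.

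The heart of the proof, and the step I expect to be the main obstacle, is the combinatorial lemma that $(\det B_S)^2=1$ when $S$ is the edge set of a spanning tree and $(\det B_S)^2=0$ otherwise. If $S$ contains a cycle, the signed incidence rows along that cycle are linearly dependent, and the dependence survives deletion of the $v_0$ column, so $\det B_S=0$; since $|S|=n-1$, the only surviving $S$ are acyclic, hence spanning trees. For a spanning tree I would argue by induction on $n$: a tree on $\ge 2$ vertices has a leaf $v\ne v_0$, whose unique incident edge contributes the only nonzero entry in column $v$ of $B_S$, so Laplace expansion along that column reduces $\det B_S$ up to sign to the determinant of the reduced incidence matrix of the tree with $v$ pruned, which is again a tree containing $v_0$. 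The base case is immediate, giving $\det B_S=\pm1$. Substituting into the Cauchy--Binet sum gives $\det\Delta^{(0)}=\sum_{T}\prod_{e\in T}c_e=\sum_T w(T)$, which combined with the first identity completes the proof. The technical care is concentrated in the unimodularity induction and in verifying the cycle-dependence claim after the column deletion.
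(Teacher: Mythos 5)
Your proof is correct and takes essentially the same route as the paper's: factor $\Delta=d^*{\cal C}d$, delete one row and column, apply Cauchy--Binet, and show the edge-set minors are $\pm1$ exactly for spanning trees (your leaf-pruning induction and cycle row-dependence argument are minor variants of the paper's root-directed-permutation and component-indicator-in-the-kernel arguments, and the symmetric ${\cal C}^{1/2}$ split versus the paper's asymmetric split is immaterial). The one genuine addition is your adjugate computation establishing ${\det}_0\Delta=|\G|\det\Delta^{(1)}$, a reduction the paper's sketch merely asserts as the ``equivalent formulation''; including it makes the proof of the stated eigenvalue form complete.
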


This theorem is usually attributed to Kirchhoff \cite{Kirchhoff}.
An equivalent formulation is that the partition sum $Z:=\sum_Tw(T)$
is equal to 
the {\bf reduced determinant} of $\Delta$,
which is the determinant of $\Delta$ after removing any row and column;
since $\Delta$ is symmetric and constant functions are in the kernel
any choice of row and column gives the same reduced determinant.
\medskip

\noindent{\emph{Proof of Theorem \ref{matrixtree}, sketch.}}
Several proofs of the Matrix-tree Theorem are known; the classical
one we present uses the {\bf Cauchy-Binet Theorem} 
which is a formula for computing
the determinant of a product of two nonsquare matrices: it states that
if $X$ is an $n\times m$ matrix and $Y$ is $m\times n$ with $m\ge n$ then
\be\label{CB}
\det XY=\sum_B \det(X^B)\det(Y_B)\ee 
where $B$ runs over all choices of $n$ column
indices for $X$, $X^B$ is the $n\times n$ minor of $X$ defined by columns $B$
and $Y_B$ is the $n\times n$ minor of $Y$ defined by rows $B$. 

For the Matrix-Tree Theorem, let $\Delta_{\bar 1,\bar 1}$ be the submatrix
of $\Delta$ obtained by removing row $1$ and column $1$.
Then $\Delta_{\bar 1,\bar 1}=d^*_{\bar 1}{\cal C}d_{\bar 1}$ where $d_{\bar 1}$
is obtained from $d$ by removing column $1$.

One now uses $X=d_{\bar 1}^*$ and $Y={\cal C}d_{\bar 1}$ in (\ref{CB}). 
Each $B$ in (\ref{CB}) is a set of $|V|-1$ edges; we claim that 
the nonzero summands in 
(\ref{CB}) are precisely the sets of edges which form spanning trees. If $B$ forms a tree, its edges can be directed
towards the removed vertex. Then in the 
definition of $\det d_{\bar 1,B}$ as an expansion over the symmetric group,
$$\det d_{\bar 1,B}=
\sum_{\sigma\in S_{n-1}}(-1)^\sigma d_{2\sigma(2)}\dots d_{n\sigma(n)},$$
the only nonzero term is the one whose permutation $\sigma$
matches each vertex to
the edge adjacent to it and closer to the root. 
Thus $\det d_{\bar 1,B}=\pm1$. If edges in $B$ do not form a tree, 
then there must be
more than one component; the function on vertices which is $1$ on a
component not containing the removed vertex and zero elsewhere is in the
kernel of $d_{\bar 1,B}$, so $\det d_{\bar 1,B}=0$. 

Finally,
the determinant of the diagonal submatrix $\det {\cal C}_B^B$ for a tree $B$
is the product of the edge weights, and 
$\det d_{\bar 1,B}^*=\det d_{\bar 1,B}=\pm1$.
So $$\det(d^*_{\bar 1,B}{\cal C_B^B}d_{\bar 1,B})=w(T).$$ This completes the proof.
\hfill$\square$
\medskip

The {\bf spanning tree measure} is the probability measure on spanning
trees of a graph in which the probability of a tree is proportional to its weight.
In the special case that the conductances are all $1$ this is the {\bf uniform
measure on spanning trees} (UST measure); 
each tree has probability $\frac1{\kappa}$ where
$\kappa$ is the determinant of the reduced Laplacian of the conductance-$1$
network; $\kappa$ is sometimes called the {\bf complexity} of the network.

Cayley's formula \cite{Cayley}, first proved by Borchhardt \cite{Borch}
gives the number spanning trees of  the complete
graph $K_n$ to be $n^{n-2}$. This follows easily from the above theorem
since $$\Delta=\begin{pmatrix}n-1&-1&\dots&-1\\
-1&n-1&-1&\vdots\\
\vdots&&\ddots&-1\\
-1&\dots&-1&n-1\end{pmatrix}$$
whose eigenvalues are $n$ with multiplicity $n-1$ and $0$ with multiplicity $1$.

\subsubsection{Groves from minors of $L$}
On a network with nodes $\No$, a {\bf grove} is a subset of the edges,
each of whose components is a tree containing one or more nodes.
In other words it is a ``spanning forest"  in which each component is
connected to the boundary (and isolated vertices are
considered components, so there are no isolated internal vertices). 
See Figure \ref{groveexample}.
We allow a component to contain more
than one node; for example a spanning tree is a special case of a grove. 

\begin{figure}
\center{\includegraphics[width=2in]{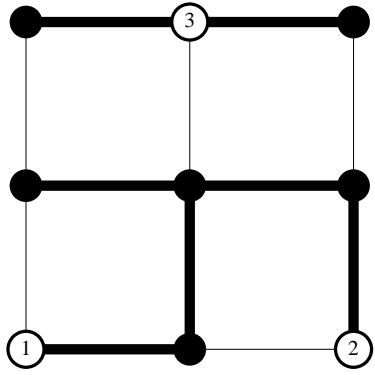}}
\caption{\label{groveexample}A grove of type $12|3$ (nodes are in white;
internal vertices in black).}
\end{figure}
Groves can be put into equivalence classes depending on how their 
components partition the nodes. Given a partition $\pi$ of the nodes,
let $S_\pi$ be the set of groves in which each component contains exactly all the nodes
of a part of $\pi$. For example when there are three nodes the
partitions are: $1|2|3$ where each node is in a separate component, 
$1|23$ where nodes $2$ and $3$ are in the same component different
from $1$, and likewise $2|13, 3|12,$ and finally $123$ where all nodes
are in the same component.  
More generally groves in the set $S_{1|2|\dots|n}$, that is, in
which each component contains exactly one node, are called
{\bf uncrossings}. The groves in $S_{12\dots n}$ are spanning trees. 

Note that for circular planar graphs not all partitions $\pi$
are possible; for a partition to be obtained by a grove
it must be planar. For example $13|24$ is not the partition of any grove on
a circular planar graph. 

The probability measure on spanning trees extends to a probability measure
on groves: the weight of a grove is the product of its edge weights,
and the probability of a grove is $\frac1{Z}$ times its weight,
where $Z=\sum_T w(T)$ is the weighted sum of all groves. 

For each partition $\pi$ we denote by $Z_{\pi}$ the weighted sum of groves
of type $\pi$. Then $Z=\sum_{\pi}Z_{\pi}$, and the probability that
a random grove has type $\pi$ is $\frac{Z_{\pi}}{Z}$. 
We define $\Zunc=Z_{1|2|\dots|n}$ to be the partition sum for uncrossings,
and $Z_{\text{tree}}=Z_{12\dots n}$ to be the weight sum of spanning trees.

All minors of the response matrix have combinatorial interpretations in terms of groves, for example when $i\ne j$ we prove in Theorem \ref{LABC} below that
$$L_{ij}=\frac{Z_{\pi(ij|\text{rest different})}}{\Zunc}=\frac{\Pr(\pi(ij|\text{rest different}))}{\Punc}$$ 
where $\pi(ij|\text{rest different})$ is the partition
in which all nodes are in separate components except for $i$ and $j$ which
are in the same component. 
More generally, we have the following combinatorial interpretation of \emph{all}
minors of $L$ (a closely related result
can be found in \cite{CIM}):
\begin{theorem}[\cite{KW2}]\label{LABC}
Let $Q,R,S,T\subset\No$ be a partition of $\No$ into four sets with $|R|=|S|$
(and some of which may be empty).
  Then $\det L_{R\cup T}^{S\cup T}$ is the ratio of two terms: the denominator
  is $\Zunc$; the numerator is a signed weighted sum
  of groves of $\G_T$, the graph $\G$ in which all nodes in $T$ are
  considered internal vertices, the nodes in $Q$ are in singleton parts,
  and in which nodes in $R$ are paired with nodes
  in $S$, with the sign being the sign of the pairing permutation:
\be\label{generalminor}
 \det L_{R\cup T}^{S\cup T} = (-1)^{|T|}\sum_{\text{\textrm{permutations} $\rho$}}(-1)^\rho\frac{Z\big[{}_{r_1}^{s_{\rho(1)}}|\cdots|_{r_k}^{s_{\rho(k)}}|q_1|\cdots|q_\ell\big]}{\Zunc}.
\ee
\end{theorem}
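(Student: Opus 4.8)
The plan is to reduce the statement to two standard determinantal facts—the Schur-complement minor identity and the all-minors matrix-tree theorem—and let the combinatorial structure of the index sets do the rest. Write $\Delta=\left(\begin{smallmatrix}A&B\\B^t&C\end{smallmatrix}\right)$ as in (\ref{responsedef}), so that $L=-(A-BC^{-1}B^t)$ is minus the Schur complement $\Delta/C$ of the internal block $C=\Delta_\No$. The first ingredient I would record is that $\det C=\Zunc$: by the matrix-tree argument of Theorem \ref{matrixtree} applied to the Dirichlet Laplacian, $\det C$ is the weighted sum of spanning forests in which every tree contains exactly one node, which is precisely the uncrossing partition sum.

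Next I would invoke the Schur-complement minor identity: for index sets $X,Y\subseteq\No$ with $|X|=|Y|$,
\be
\det\big((\Delta/C)_X^Y\big)=\frac{\det\Delta_{X\cup I}^{Y\cup I}}{\det C}.
\ee
This is immediate from $\det N=\det(N/C)\det C$ applied to the square submatrix $N=\Delta_{X\cup I}^{Y\cup I}$, whose bottom-right block is $C$ and whose Schur complement is $(\Delta/C)_X^Y$. Taking $X=R\cup T$, $Y=S\cup T$ (so $|X|=|Y|=k+|T|$ with $k=|R|=|S|$) and using $L=-(\Delta/C)$ gives
\be
\det L_{R\cup T}^{S\cup T}=(-1)^{k+|T|}\,\frac{\det\Delta_{\overline{Q\cup S}}^{\overline{Q\cup R}}}{\Zunc},
\ee
where $\overline{Q\cup S}=R\cup T\cup I$ is the kept row set and the kept columns are $\overline{Q\cup R}=S\cup T\cup I$.

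The heart of the argument is the all-minors matrix-tree theorem, which I would derive exactly as in Theorem \ref{matrixtree} by applying Cauchy-Binet to $\Delta_{\overline U}^{\overline W}=(d_{\overline U})^{*}\,{\cal C}\,d_{\overline W}$, where $d_{\overline U}$ is $d$ with the columns indexed by $U$ deleted: for $|U|=|W|$, $\det\Delta_{\overline U}^{\overline W}$ is a signed weighted sum over spanning forests with $|U|$ trees, each tree containing exactly one vertex of $U$ and one of $W$, the sign being that of the induced matching permutation times a global sign fixed by the vertex ordering. Here $U=Q\cup S$ and $W=Q\cup R$. The key structural observation is that $Q\subseteq U\cap W$: a vertex $q\in Q$ is simultaneously a $U$-vertex and a $W$-vertex, so the tree containing it can contain no further vertex of $U$ (two $U$-vertices are forbidden) and none of $W$; since $Q\cup R\cup S=\No\setminus T$, this forces each $q$ into a singleton component, self-matched. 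The remaining trees each carry one vertex of $S$ and one of $R$, defining a permutation $\rho$ with $s_{\rho(i)}$ in the tree of $r_i$, and contributing $(-1)^\rho$. Because $T$ lies in neither $U$ nor $W$, the $T$-vertices are free to lie in any tree, i.e.\ they behave as internal vertices—so these forests are exactly the groves of $\G_T$ of type ${}_{r_1}^{s_{\rho(1)}}|\cdots|q_1|\cdots|q_\ell$. Summing over $\rho$ recovers the numerator of (\ref{generalminor}).

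The step I expect to be most delicate is the sign bookkeeping: the all-minors theorem carries a global sign $\epsilon_0$ determined by the positions of the deleted rows $Q\cup S$ and columns $Q\cup R$ in a chosen ordering of $V$, and I must check that $(-1)^{k+|T|}\epsilon_0$ collapses to the advertised $(-1)^{|T|}$ while the matching sign becomes exactly $(-1)^\rho$. I would pin this down by choosing an ordering in which $R$, $S$, $Q$ are interleaved conveniently so that $\epsilon_0=(-1)^k$, and cross-check against the special case $R=\{i\}$, $S=\{j\}$, $T=\emptyset$, where the formula must reduce to the off-diagonal identity $L_{ij}=Z_{\pi(ij|\text{rest different})}/\Zunc$ recorded just above the theorem.
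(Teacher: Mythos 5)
Your proposal is correct and takes essentially the same route as the paper's proof: the paper likewise relates $\det L_{R\cup T}^{S\cup T}$ to $\det\Delta_{R\cup T\cup I}^{S\cup T\cup I}/\det C$ via a block (Schur-complement) factorization of $\Delta$ with $\det C=\Zunc$, and then expands $\det\Delta_{R\cup I'}^{S\cup I'}$ for $I'=I\cup T$ by Cauchy--Binet applied to $d^*{\cal C}d$, identifying the nonzero terms with exactly the groves you describe ($Q$ in singleton parts, $T$ acting as internal vertices, $R$ paired to $S$) and the sign with $(-1)^\rho$ via a permutation matrix. Your packaging of the second step as an all-minors matrix-tree theorem, and your deferred-but-planned global sign check, are only cosmetic differences from the paper's treatment, whose own sign bookkeeping is handled in the same spirit.
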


As a special case of this theorem, we have the above statement about $L_{ij}$
for $i\ne j$ (here $L_{ij}$ is the $i,j$-entry of $L$ which in the notation of
the theorem would be $L_{\{i\}}^{\{j\}}$). 
Another special case is the reduced determinant of $L$: for any $i$ and $j$,
removing row $i$ and column $j$ gives
$$\det L_{\bar i}^{\bar j}=\frac{Z_{\text{tree}}}{\Zunc}$$
(to see this, take $R=\{i\},S=\{j\}$ and $T=\No\setminus\{i,j\}$).

In the example (\ref{Lexample}) above where $\G$ is the $Y$ network 
of Figure \ref{Ynetwork},
$L_{12}=\frac{c_1c_2}{c_1+c_2+c_3}$; $c_1c_2$ 
is the weight of the unique grove of type
$12|3$ and $c_1+c_2+c_3$ is the sum of the weights of the three
groves of type $1|2|3$.
\medskip

\noindent{\it Proof of Theorem \ref{CdVthm}, necessity}.
Since nonplanar pairings do not occur,
if $R$ and $S$ are non-interlaced subsets of the nodes of the same size,
then 
$\det L_R^S$ is a nonnegative because there is a single term in 
the sum in (\ref{generalminor}) and the permutation is the identity.
This gives a combinatorial interpretation of the inequalities in 
Theorem \ref{CdVthm}: they are positive because they count something. 
\hfill{$\square$}

\begin{proof}[Proof of Theorem \ref{LABC}]
Write $\Delta$ in block form as 
$$\Delta=\begin{pmatrix}A&B\\B^t&C\end{pmatrix}$$
where $A$ is indexed by the nodes and $C$ by the internal vertices.
Then we can factor $\Delta$ as
$$\Delta=\begin{pmatrix}A&B\\B^t&C\end{pmatrix}=
\begin{pmatrix}A-BC^{-1}B^t&BC^{-1}\\0&I\end{pmatrix}
\begin{pmatrix}I&0\\B^t&C\end{pmatrix}=\begin{pmatrix}-L&BC^{-1}\\0&I\end{pmatrix}
\begin{pmatrix}I&0\\B^t&C\end{pmatrix}.
$$

We wish to compute the determinant of the minor
$\Delta_{X\cup I}^{Y\cup I}$. For notational simplicity reorder the nodes so that 
nodes in $X$ come last and nodes in $Y$ come first.
Then $$\Delta=\begin{pmatrix}*&*&*\\-L_X^Y&*&*\\0&0&I\end{pmatrix}
\begin{pmatrix}I&0&0\\0&I&0\\{}*&*&C\end{pmatrix}.
$$
and
$$\Delta_{X\cup I}^{Y\cup I}=\begin{pmatrix}-L_X^Y&*&*\\0&0&I\end{pmatrix}
\begin{pmatrix}I&0\\0&0\\{}*&C\end{pmatrix}.$$

The Cauchy-Binet formula (\ref{CB}) then gives
$$\det\Delta_{X\cup I}^{Y\cup I}=(-1)^{|X|}\det L_X^Y\det C.$$ 
In particular
$$\det\Delta_{R\cup T\cup I}^{S\cup T\cup I}=(-1)^{|R|+|T|}
\det L_{R\cup T}^{S\cup T}\det C
$$ 
where $I$ is the set of internal nodes. 
Since $\Zunc=\det C$, it suffices to show that 
$\det\Delta_{R\cup T\cup I}^{S\cup T\cup I}$ is the desired numerator.

Let $I'=I\cup T$ be the ``new" set of internal nodes.
We need to evaluate $\det\Delta_{R\cup I'}^{S\cup I'}$.
The remainder of the proof is now an extension of 
the proof of Theorem \ref{matrixtree}.
Write $\Delta=d^*{\cal C}d$.
Then $\Delta_{R\cup I'}^{S\cup I'}=d_{S\cup I'}^*{\cal C}d_{R\cup I'}$
where $d_X$ represents the restriction of $d$ to the subspace indexed
by $X$.
By the Cauchy-Binet theorem,
\be\label{multisum}
\det d_{S\cup I'}^*{\cal C}d_{R\cup I'}=\sum_Y\det (d^Y_{S\cup I'})^*
\det{\cal C}_Y^Y
\det d_{R\cup I'}^Y,\ee
where the sum is over collections of edges $Y$ of cardinality $|S\cup
I'|=|R\cup I'|$.  

The terms in the sum (\ref{multisum}) are collections of edges $Y$ in the graph 
$\G$. The number of components of the subgraph with edges $Y$ is at least 
$$|\text{vertices}|-|\text{edges}|=|R\cup S\cup Q\cup I'|-|Y|=
|R|+|Q|.$$ We claim that for each nonzero term in the sum (\ref{multisum}),
each component is a tree containing exactly one element of $S\cup Q$.
If some component did not contain a point of $S\cup Q$, then the function
which is $1$ on this component and zero elsewhere would be in the kernel
of $d_{R\cup I'}^Y$ whose determinant would then be zero. 
Since the number of components is $|R|+|Q|=|S|+|Q|$ each component must contain exactly one point of $S\cup Q$.
Similarly each component also contains exactly
one element of $R\cup Q$ (since $\det d_{S\cup I}^Y\ne 0$).
Thus each component is a tree containing either a unique $q\in Q$
(and no point of $R\cup S$), or a tree containing both a 
unique $r\in R$ and a unique $s\in S$. The total number of components
is $|R|+|Q|$ so each element of $Q,R$ and $S$ is in a component.
In particular there is a permutation $\rho$ and for each $i$ a 
component joining $r_i$, the $i$th element of $R$, to $s_{\rho(i)}$, the $\rho(i)$th
element of $S$.  The weight of a component
is the product of its edge weights.
The sign of the term corresponding to the subset $Y$
can be determined as follows. Each component connected to $Q$ occurs
in both $\det d_{R\cup I'}^Y$ and $\det d_{S\cup I'}^Y$ and so contributes
sign $+1$. If we postmultiply $d_{S\cup I'}$ by the permutation matrix $P$
which permutes the rows according to $\rho$ then the sign of the 
corresponding terms in $Pd_{S\cup I'}$ and $d_{R\cup I'}$ 
are the same since they corresponding to the same choice of rows.
Since $\det P= (-1)^{\rho}$ this completes the proof. 
\end{proof}

\subsubsection{Other groves}

Theorem \ref{LABC} shows that, when divided by the factor $\Punc$
(the probability of the uncrossing) 
certain grove probabilities are given by
noninterlaced minors of $L$: those groves which correspond
to noninterlaced pairings, for example.

Remarkably, one can give a similar expression for \emph{all} grove types:
see Theorem \ref{allgroves} below.

For a partition $\tau$ of $\{1,\dots,n\}$ (planar or not) we define 
\begin{equation}\label{Ltau}
L_\tau = \sum_F\prod_{\text{$\{i,j\} \in F$}} L_{i,j},
\end{equation}
where the sum is over those spanning forests $F$ of the complete graph on
$n$ vertices $1,\dots,n$ for which the trees of $F$ span the parts of $\tau$.
For example we have 
$$L_{(123|4)}=L_{12}L_{13}+L_{12}L_{23}+L_{13}L_{23}$$
and $$L_{(13|24)}=L_{13}L_{24}.$$

\begin{thm}[\cite{KW1}]\label{allgroves}
For any planar partition $\sigma$ we have
$$\frac{\Pr(\sigma)}{\Punc} = \sum_\tau \p_{\sigma,\tau} L_\tau$$
where $\p$ is the integer matrix defined below.
\end{thm}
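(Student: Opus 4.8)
The first thing I would do is reinterpret the statement as a polynomial identity in the entries of the response matrix. Both sides are functions of the conductances, but the right-hand side is manifestly a polynomial in the off-diagonal entries $L_{ij}$, so the real content is that the normalized grove sum $\Pr(\sigma)/\Punc = Z_\sigma/\Zunc$ is itself such a polynomial, and that its expansion in the forest polynomials $L_\tau$ has integer coefficients. The organizing observation is that, since the constant functions span the kernel of $L$, the matrix $-L$ has vanishing row sums and positive off-diagonal entries $L_{ij}$; hence $-L$ is exactly the Laplacian of the weighted complete graph on $\No$ with conductances $L_{ij}$. Reading the definition of $L_\tau$ through the Matrix-Tree Theorem (Theorem \ref{matrixtree}) then identifies $L_\tau$ as a product, over the parts of $\tau$, of reduced spanning-tree determinants of this auxiliary Laplacian; in particular $L_{(12\cdots n)}$ is its reduced determinant, which by the special case of Theorem \ref{LABC} equals $Z_{\text{tree}}/\Zunc$. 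So the $L_\tau$ are precisely the natural determinantal coordinates on the right-hand side.

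Next I would show that $Z_\sigma/\Zunc$ is an integer combination of the $L_\tau$. The bridge is Theorem \ref{LABC}, which writes each minor $\det L_{R\cup T}^{S\cup T}$ as a signed sum of grove partition sums with the nodes of $T$ internalized. Since that theorem directly produces only pairings-plus-singletons, I would assemble a general planar $\sigma$ by growing each of its parts: repeatedly pair two of its nodes via an off-diagonal entry and then internalize one of them so it can be reattached to the next node of the part. Internalizing a node, however, only forces it to lie in some component, not in a prescribed one, so I would use inclusion--exclusion (M\"obius inversion over the partition lattice) to pass from these internalized grove sums to the sum $Z_\sigma$ for the exact partition $\sigma$. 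Combined with the Matrix-Tree reading above, this expresses $Z_\sigma/\Zunc$ as an integer combination of products of the $L_{ij}$, that is, of forest polynomials $L_\tau$, with all the $\Zunc$ denominators cancelling automatically because the underlying identity is formal in the $L_{ij}$.

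It remains to check that the integer coefficients produced this way coincide with the matrix $\p$ defined below. They are integers by construction, coming only from the signs $(-1)^\rho$ in Theorem \ref{LABC} and from the M\"obius function of the partition lattice; to match $\p$ one orders partitions by refinement and observes that the transition between $\{Z_\sigma/\Zunc\}$ and $\{L_\tau\}$ is triangular with unit diagonal, so its inverse is forced to be the integer matrix $\p$. Non-planar $\tau$ genuinely enter the final answer: already for $n=4$ the crossing partition contributes the term $L_{(13|24)}=L_{13}L_{24}$.

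The main obstacle is the middle step. Getting Theorem \ref{LABC} to build parts of size larger than two forces the internalization trick, and the attendant inclusion--exclusion must be carried out so that (a) exactly the planar $\sigma$ survive --- this is where the non-crossing structure of circular planar groves is essential in killing the interlaced pairings --- and (b) the bookkeeping of the signs $(-1)^\rho$ against the M\"obius signs yields precisely integer coefficients with the claimed triangularity. Once that combinatorial inversion is under control, the identity and the integrality of $\p$ follow from the Matrix-Tree Theorem and linear algebra.
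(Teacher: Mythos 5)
Your setup is sound as far as it goes: $-L$ is indeed the Laplacian of the weighted complete graph on $\No$ with conductances $L_{ij}$, the $L_\tau$ are its forest sums, the special cases you check are correct, and you rightly observe that nonplanar grove sums vanish on circular planar networks. Note first, though, that the paper states this theorem without proof, quoting it from \cite{KW1}, so there is no in-paper argument to match: your proposal must stand alone, and it does not, because the step you yourself flag as the main obstacle is closed by a claim that is false as stated. The transition between the internalized-minor quantities of Theorem \ref{LABC} and the grove sums $Z_\sigma/\Zunc$ is \emph{not} triangular with unit diagonal with respect to refinement. Already for $n=6$ and $\sigma=12|34|56$ (take $R=\{1,3,5\}$, $S=\{2,4,6\}$, $T=Q=\emptyset$), the minor $\det L_{\{1,3,5\}}^{\{2,4,6\}}$ is a signed sum of the grove sums of all five planar pairings $12|34|56$, $14|23|56$, $12|36|45$, $16|23|45$, $16|25|34$, which are pairwise incomparable under refinement, so a refinement induction never gets started. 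For pure pairings one can repair this with a different order (number of nestings, inducting from the rainbow pairing down), but your internalization step for parts of size at least $3$ breaks that order too: for $n=5$ and $\sigma=145|23$, taking $R=\{4,2\}$, $S=\{1,3\}$, $T=\{5\}$, the surviving correction terms are $345|12$ and $34|125$, which are \emph{less} nested than $\sigma$, while the other choice of endpoints for the part $\{1,4,5\}$ picks up $15|234$ instead. So no order you have named, nor any obvious one, makes the system triangular, and the inversion --- the heart of the proof --- is missing. (A smaller unaddressed point: minors of $L$ expand with diagonal entries and with cycle monomials such as $L_{12}L_{23}L_{31}$, so even granting the inversion you owe an argument that the final expression lies in the span of the forest polynomials $L_\tau$ at all.)

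Even if the inversion were carried out, your identification of the resulting coefficients with $\p$ is a non-argument. The matrix from planar partitions to all partitions is rectangular, so ``its inverse is forced to be $\p$'' has no meaning, and $\p$ is not defined as an inverse of anything: it is defined by repeated application of \ref{part-trans}, whose very consistency is the separate Theorem \ref{P-thm}, which your proposal never engages. What is true, and what you gesture at, is that the $L_\tau$ are linearly independent (their monomial supports are disjoint) and that response matrices of well-connected networks fill an open set, so an expansion of $Z_\sigma/\Zunc$ in the $L_\tau$ is unique \emph{if it exists}; but uniqueness cuts the wrong way for you, since it shows your coefficients would agree with $\p_{\sigma,\tau}$ only if the theorem were already known. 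To prove the theorem you must actually match your inclusion--exclusion signs against the skein coefficients of \ref{part-trans}, and nothing in the proposal does this; in \cite{KW1} the expansion is established hand in hand with Theorem \ref{P-thm} rather than by inverting a system of determinantal identities. As written, then, you have a plausible program with the right ingredients (Theorem \ref{LABC}, internalization, vanishing of nonplanar grove sums) but not a proof.
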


The rows of the matrix $\p$ are indexed by planar partitions, and the columns are indexed by all partitions.  In the case of $n=4$ nodes, the matrix $\p$ is

\vspace{3pt}
\newcommand{\rf}[1]{\begin{rotatebox}{90}{$#1$}\end{rotatebox}}
\centerline{
\begin{tabular}{r@{$\,\,\,$}c@{$\,$}c@{$\,$}c@{$\,$}c@{$\,$}c@{$\,$}c@{$\,$}c@{$\,$}c@{$\,$}c@{$\,$}c@{$\,$}c@{$\,$}c@{$\,$}c@{$\,$}c@{$\,$}c}
        & \rf{1|2|3|4} & \rf{12|3|4} & \rf{13|2|4} & \rf{14|2|3} & \rf{23|1|4} & \rf{24|1|3} & \rf{34|1|2} & \rf{12|34} & \rf{14|23} & \rf{1|234} & \rf{2|134} & \rf{3|124} & \rf{4|123} & \rf{1234\phantom{|}} & \rf{13|24} \\
$1|2|3|4$  &1&0&0&0&0&0&0&0&0&0&0&0&0&0& 0 \\
$12|3|4$   &0&1&0&0&0&0&0&0&0&0&0&0&0&0& 0 \\
$13|2|4$   &0&0&1&0&0&0&0&0&0&0&0&0&0&0& 0 \\
$14|2|3$   &0&0&0&1&0&0&0&0&0&0&0&0&0&0& 0 \\
$23|1|4$   &0&0&0&0&1&0&0&0&0&0&0&0&0&0& 0 \\
$24|1|3$   &0&0&0&0&0&1&0&0&0&0&0&0&0&0& 0 \\
$34|1|2$   &0&0&0&0&0&0&1&0&0&0&0&0&0&0& 0 \\
$12|34$    &0&0&0&0&0&0&0&1&0&0&0&0&0&0&$-1$ \\
$14|23$    &0&0&0&0&0&0&0&0&1&0&0&0&0&0&$-1$ \\
$1|234$    &0&0&0&0&0&0&0&0&0&1&0&0&0&0& 1 \\
$2|134$    &0&0&0&0&0&0&0&0&0&0&1&0&0&0& 1 \\
$3|124$    &0&0&0&0&0&0&0&0&0&0&0&1&0&0& 1 \\
$4|123$    &0&0&0&0&0&0&0&0&0&0&0&0&1&0& 1 \\
$1234$     &0&0&0&0&0&0&0&0&0&0&0&0&0&1& 0.
\end{tabular}}
\vspace{3pt}
For example, the row for $1|234$ tells us
$$
  \frac{\Pr(1|234)}{\Pr(1|2|3|4)} = L_{(1|234)} + L_{(13|24)} = L_{2,3} L_{3,4} + L_{2,3}L_{2,4} + L_{2,4} L_{3,4} + L_{1,3}L_{2,4}
$$
and the row for $12|34$ tells us
$$
  \frac{\Pr(12|34)}{\Pr(1|2|3|4)} = L_{(12|34)} - L_{(13|24)} = L_{1,2} L_{3,4} - L_{1,3}L_{2,4}.
$$

We call this matrix $\p$ the \textbf{projection matrix from partitions to
planar partitions}, since it can be interpreted as a map from the
vector space whose basis vectors are indexed by all partitions to the
vector space whose basis vectors are indexed by planar partitions, and
the map is the identity on planar partitions.  
For example, the column for $13|24$ tells us
$$
13|24\ \ \text{projects to}\ \ -12|34-14|23+1|234+2|134+3|124+4|123.
$$

The projection matrix may be computed using some simple combinatorial
transformations of partitions.  Given a partition $\tau$, the
$\tau$th column of $\p$ may be computed by repeated application of
the following transformation rule, until the resulting formal linear
combination of partitions only involves planar partitions.
The rule is a generalization of the transformation
$$
13|24 \to -12|34-14|23+1|234+2|134+3|124+4|123
$$ 
which can be represented as a sort of skein diagram:
\begin{center}
\includegraphics[scale=1]{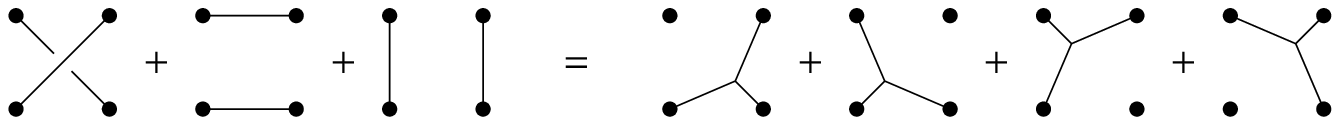}
\end{center}
We generalize this rule to partitions $\tau$ containing additional items and parts
as follows.  If
partition $\tau$ is nonplanar, then there will exist items $a<b<c<d$
such that $a$ and $c$ belong to one part, and $b$ and $d$ belong to
another part.  Arbitrarily subdivide the part containing $a$ and $c$
into two sets $A$ and $C$ such that $a\in A$ and $c\in C$, and
similarly subdivide the part containing $b$ and $d$ into $B\ni b$ and
$D\ni d$.  Let the remaining parts of partition $\tau$ (if any) be
\newcommand{\rest}{\operatorname{rest}}
denoted by ``$\rest$.''  Then the transformation rule is
\begin{multline}\label{part-trans}\tag{Rule~1}
 AC|BD|\rest \to
 A|BCD|\rest
\,+\,B|ACD|\rest
\,+\,C|ABD|\rest
\,+\,D|ABC|\rest\\
-\,AB|CD|\rest
\,-\,AD|BC|\rest.
\end{multline}

\begin{theorem}[\cite{KW1}]\label{P-thm}
  Any partition $\tau$ may be transformed into a formal linear
  combination of planar partitions by repeated application of
  \ref{part-trans}, and the resulting linear combination does not
  depend on the choices made when applying \ref{part-trans}, so
  that we may write
  $$\tau \to \sum_{\text{\rm planar partitions $\sigma$}} \p_{\sigma,\tau} \sigma.$$
\end{theorem}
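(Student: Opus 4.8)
The statement combines two separate assertions about the rewriting system generated by the rule \eqref{part-trans}: \emph{termination} (every sequence of applications eventually reaches a linear combination of planar partitions) and \emph{confluence} (the terminal combination is independent of all the choices made along the way). The plan is to treat these in turn, reducing confluence to a local statement via Newman's lemma (local confluence plus termination implies confluence).

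For termination I would introduce a monovariant on partitions that strictly decreases under each application of the rule. The natural candidate is the crossing number $\mathrm{cr}(\pi)$, the number of quadruples $a<b<c<d$ with $a,c$ in one block and $b,d$ in a \emph{different} block; a partition is planar precisely when $\mathrm{cr}(\pi)=0$, so the normal forms of the system are exactly the planar partitions. First I would note that the number of blocks is preserved by \eqref{part-trans} (each of the six right-hand terms has the same number of parts as $AC|BD|\mathrm{rest}$), so one may work inside the finite set of partitions with a fixed block count. The key lemma to establish is then that every one of the six partitions produced by the rule has strictly smaller crossing number than $AC|BD|\mathrm{rest}$, \emph{for every admissible subdivision} $A\sqcup C$ of the first block and $B\sqcup D$ of the second. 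The resolved crossing $(a,b,c,d)$ is destroyed in all six terms; the real work is to bound the crossings that the regrouping of $A,B,C,D$, and their interaction with the blocks of $\mathrm{rest}$, can create, since the subdivision may scatter the extra elements of the two crossing blocks. As $\mathrm{cr}$ is a nonnegative integer, a strict decrease forbids infinite rewrite sequences and yields termination.

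With termination in hand, Newman's lemma reduces confluence to \emph{local} confluence: whenever two applications of the rule are available at a partition $\tau$, the two results must reduce to a common linear combination. I would organize the critical pairs by how the two applications overlap: (i) two crossings supported on four disjoint blocks, where the applications commute and joinability is immediate; (ii) two crossings sharing exactly one block; (iii) two crossings inside the same pair of blocks; and, most basically, (iv) the same crossing $(a,b,c,d)$ resolved using two different subdivisions of the crossing blocks. Case~(iv), independence from the arbitrary subdivision, together with the overlapping cases (ii)--(iii), is where the genuine content lies, and I expect it to be the main obstacle: one must show that the two six-term expansions, after further reductions, agree, which demands careful bookkeeping of the signs and of where the extra elements of the blocks are routed.

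A cleaner route to confluence, which I would pursue in parallel and which may sidestep the critical-pair analysis entirely, is to exhibit an explicit linear invariant. Concretely, I would look for a bilinear pairing $\langle\,\cdot\,,\,\cdot\,\rangle$ on the free vector space on partitions --- for instance $\langle\sigma,\tau\rangle$ a suitable function (with a formal parameter) of the number of blocks of the join $\sigma\vee\tau$ in the partition lattice --- with two properties: that \eqref{part-trans} preserves $\langle\,\cdot\,,\tau'\rangle$ for every planar $\tau'$ (a single local identity, forced by the signs in the rule and checkable on the generating relation $13|24\to\cdots$), and that the Gram matrix of $\langle\,\cdot\,,\,\cdot\,\rangle$ on planar partitions is nondegenerate. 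Given such a pairing, any two terminal combinations $X,Y$ obtained from $\tau$ satisfy $\langle X,\tau'\rangle=\langle\tau,\tau'\rangle=\langle Y,\tau'\rangle$ for all planar $\tau'$, so $X-Y$ is a combination of planar partitions annihilated by all $\langle\,\cdot\,,\tau'\rangle$ and hence zero; this gives independence of the choices for free, leaving only termination to prove by hand. On this route the remaining obstacle is the nondegeneracy of the planar Gram matrix, which I would expect to follow from a Temperley--Lieb or meander-type evaluation.
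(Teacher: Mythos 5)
The first thing to say is that the paper you were asked to match contains no proof of Theorem~\ref{P-thm}: it is quoted from \cite{KW1}, where well-definedness is ultimately derived from the network/grove interpretation (the functionals of Theorem~\ref{allgroves} and linear independence of the planar grove polynomials) rather than from an abstract rewriting analysis --- so of your two routes, the second is the one in the spirit of the actual proof. Your first route fails at its announced key lemma: it is \emph{not} true that all six partitions produced by \ref{part-trans} have strictly smaller crossing number. Take $\tau=15|36|24$ on $\{1,\dots,6\}$: its crossing quadruples are $(1,3,5,6)$ (between $\{1,5\}$ and $\{3,6\}$) and $(2,3,4,6)$ (between $\{3,6\}$ and $\{2,4\}$), so $\mathrm{cr}(\tau)=2$. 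Resolving the crossing $(a,b,c,d)=(1,3,5,6)$ forces $A=\{1\}$, $C=\{5\}$, $B=\{3\}$, $D=\{6\}$, and the output term $A|BCD|\mathrm{rest}=1|356|24$ has crossing quadruples $(2,3,4,5)$ and $(2,3,4,6)$, hence $\mathrm{cr}=2$ again: merging $C$ with $B\cup D$ created a new crossing with the bystander block $\{2,4\}$, exactly the danger you flagged but did not resolve. Since your only termination argument is this false strict-decrease claim, and Newman's lemma requires termination of \emph{all} rewrite sequences before local confluence buys you anything, both halves of the first route are unsupported; the critical-pair analysis (your cases (ii)--(iv)) is in any case only announced, not performed.

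Your second route is the right strategy but, as written, a plan rather than a proof, and the step you describe as ``a single local identity\dots checkable on the generating relation'' is subtler than you suggest, because the most natural candidate invariants demonstrably fail it. On the circular planar star network with nodes $1,2,3,4$ on the circle, one internal center vertex, and unit conductances, the grove partition functions give $Z[13|24]=0$ while the signed sum of the six right-hand sides is $1+1+1+1-0-0=4$; likewise the functionals $\tau\mapsto L_\tau$ of (\ref{Ltau}), evaluated at its response matrix ($L_{ij}=1/4$ for $i\ne j$), give $L_{(13|24)}=1/16$ against $4\cdot\tfrac{3}{16}-\tfrac{2}{16}=\tfrac{10}{16}$. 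So neither groves nor the $L_\tau$'s satisfy \ref{part-trans} as an identity, and the pairing must be engineered with more care (in \cite{KW1} the normalization by the uncrossing and the precise form of Theorem~\ref{allgroves} carry this weight); the nondegeneracy of the planar Gram matrix, which you defer to ``a Temperley--Lieb or meander-type evaluation,'' is the second pillar you leave unproven. In sum: the termination lemma is false, the confluence arguments are not carried out, and the invariant-pairing route is a correct outline whose two essential lemmas are both missing.
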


\subsection{Minimality and electrical equivalence}
\old{
A circular planar graph is said to be {\bf well-connected} if $\det L_A^B>0$
for all non-interlaced sets of nodes $A,B$. 
By Theorem \ref{generalminor}, this will be true if $\G$ supports, 
for each non-interlaced pair $A,B$, 
a grove which pairs the vertices of $A$ and $B$.
Any such grove will have as a subset a set of disjoint simple paths
pairing the vertices of $A$ and $B$, and conversely, any 
set of disjoint paths pairing $A$ and $B$ can be completed to such a grove
by adding edges. 
Thus the condition that $\G$ be well-connected can be translated
into a condition about pairing sets of vertices with disjoint simple paths. 

In section \ref{reconsection} we discuss the minimal sets of inequalities 
which imply well-connectedness.
We first discuss the medial graph and indicate how it plays a role in 
determining well-connectedness and equivalence of minimal networks.
}

\subsubsection{Medial graph}

If $\G$ is a circular planar network or network on a surface,
the {\bf medial graph} of $\G$ is the graph with a vertex $v(e)$ for every edge $e$
of $\G$, and an edge connecting $v(e)$ and $v(e')$ if $e$ and $e'$ share a 
vertex and are
adjacent edges (in cyclic order) out of this vertex. The medial graph is regular of 
degree $4$.  We usually cut the edges of the medial graph which
separate the nodes from $\infty$, so that the medial graph has two 
half-edges (called ``{\bf stubs}") adjacent to each node, see Figure \ref{medialgraph}.

\begin{figure}[htbp]
\center{\includegraphics[width=2.5in]{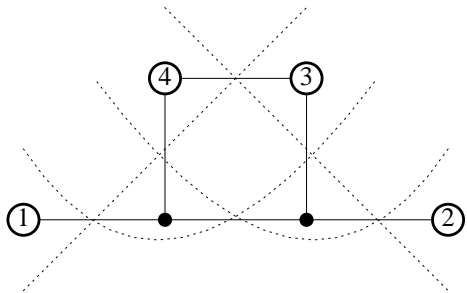}}
\caption{\label{medialgraph} The medial graph (dotted) for a 
$4$-node network (solid).}
\end{figure}
A {\bf strand} of the medial graph $M$ is a maximal path in the medial graph
which continues ``straight" at each vertex of $M$, that is, turns neither
left nor right. There is a strand starting at each stub and ending at another stub,
and possibly other strands which form closed loops.

If $\G$ is drawn in a disk, with the nodes on the bounding circle,
the strands of a medial graph are the boundaries of a cell decomposition
of the disk. The cells come in two types, corresponding to vertices and faces
of $\G$ (between two adjacent nodes there is a cell corresponding to an 
exterior face of $\G$). 

A nice property of medial strands is that, for minimal
graphs (see the next section), they are boundaries
of the support of equilibrium potentials. Let us explain.

\begin{lemma}\cite{CdV}
Suppose $\G$ is minimal.
Let $a,b$ be stubs connected by a medial strand. This medial strand
divides the disk into two parts $U_+,U_-$. 
Then there is an equilibrium potential $f$ (a function harmonic on $I$ and with Dirichlet boundary conditions on $\No$),
not identically zero, with harmonic conjugate $f^*$
with the property that both $f$ and $f^*$ are identically zero on (cells in) $U_-$.
\end{lemma}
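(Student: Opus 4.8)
The plan is to realize the desired potential as a solution of a homogeneous linear system and to use minimality only to guarantee that this system has a nontrivial kernel. First I would record the combinatorial consequence of minimality: since $\G$ is minimal, the strand from $a$ to $b$ is a \emph{simple} arc, so $U_+$ and $U_-$ are honest disks and every cell of the medial decomposition lies entirely in one of them. This partitions the vertices as $V=V_+\sqcup V_-$ and the faces (interior and exterior) as $F_+\sqcup F_-$ according to the side on which their cell sits, and it sorts the edges into those whose medial vertex $v(e)$ lies strictly inside $U_+$, those with $v(e)$ strictly inside $U_-$, and the edges $e_1,\dots,e_k$ actually crossed by the strand. The key local observation is that a crossed edge $e_j$ has one endpoint $v_j^+\in V_+$ and one in $V_-$, and its two faces split one into $F_+$ and one into $F_-$, whereas for an edge lying strictly inside $U_-$ all four surrounding cells lie in $U_-$.

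Next I would take as unknowns the values of $f$ on $V_+$ and of $f^*$ on $F_+$, declare $f\equiv 0$ on $V_-$ and $f^*\equiv 0$ on $F_-$, and impose the discrete Cauchy--Riemann equations (\ref{CR}) on every edge. Recall from the discussion of conjugate harmonic functions that a solution of all of (\ref{CR}) automatically has $f$ harmonic at every interior vertex and $f^*$ harmonic at every interior face; thus any solution is an equilibrium potential together with its conjugate, and the Dirichlet conditions are built in. The point of the partition above is that the equations (\ref{CR}) attached to edges strictly inside $U_-$ read $0=0$ and impose nothing, while each crossed edge $e_j$ contributes the single relation $c_{e_j}f(v_j^+)=\pm f^*(F_j^+)$ (with $F_j^+\in F_+$ its surviving face) linking the two values that survive at the crossing. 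So the only nontrivial constraints come from the $m_+$ edges strictly inside $U_+$ together with the $k$ crossed edges.

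The existence of a nonzero solution is then a dimension count: by rank--nullity the solution space has dimension at least $\big(|V_+|+|F_+|\big)-(m_++k)$, and I would show by an Euler-characteristic computation on the disk $\overline{U_+}$ (whose $2$-cells are exactly the cells of $V_+$ and $F_+$, whose interior $0$-cells are the $m_+$ medial vertices, and whose boundary carries the $k$ crossings together with an arc of the circle) that this number equals $1$. Hence the kernel is at least one-dimensional and we may pick $f\not\equiv 0$; such a solution cannot have $f\equiv 0$, since then $f^*$ would be locally constant, hence constant on the connected dual graph, hence zero because $f^*|_{F_-}=0$. This produces the required equilibrium potential vanishing with its conjugate on $U_-$.

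The main obstacle is precisely this Euler count and, behind it, the use of minimality. If $\G$ were not minimal the strand could self-intersect, or two strands could cross twice; then $U_-$ would fail to be a disk, a single edge could be forced to be both interior to $U_-$ and crossed, and the system would be over-determined and collapse to the zero solution. So the real work is to verify that simplicity of the strand makes the cell partition clean and forces the Euler characteristic to come out to $1$ rather than to something nonpositive. A secondary delicate point is the bookkeeping at the two stubs $a,b$, where the strand meets the boundary circle and where the exterior faces adjacent to the terminal nodes must be assigned to the correct side; I would handle these as part of $\partial\overline{U_+}$ in the computation. As a sanity check the count already gives dimension $1$ on the simplest cases — a single edge between two nodes and a two-edge series network — matching the explicit solutions there.
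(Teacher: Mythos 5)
The paper states this lemma without proof, citing Colin de Verdi\`ere, so your argument is judged on its own merits. Your overall strategy is legitimate and in the spirit of the original: impose the discrete Cauchy--Riemann equations (\ref{CR}) on all edges with $f\equiv 0$ on $V_-$ and $f^*\equiv 0$ on $F_-$, note that the equations on edges interior to $U_-$ are vacuous while each crossed edge leaves one relation $c_{e_j}f(v_j^+)=\pm f^*(F_j^+)$, and observe that any solution of the full system is automatically an equilibrium potential with conjugate (harmonicity of $f$ at every interior vertex, including vertices of $V_-$ adjacent to crossed edges, follows by telescoping the $f^*$-differences around the vertex, since (\ref{CR}) holds on \emph{every} edge). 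Your identification of the local structure at a crossing --- one endpoint and one face of $e_j$ on each side --- is also correct.

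However, your Euler count is wrong as stated: $(|V_+|+|F_+|)-(m_++k)$ does not equal $1$ in general. In the count on $\overline{U_+}$ you must include among the boundary $0$-cells the $s_+$ stub endpoints lying on the circular arc of $\partial\overline{U_+}$ (you account only for the $k$ crossings and the two strand endpoints); each crossing contributes exactly one interior medial edge-end into $U_+$ and each stub contributes one as well. The computation then yields
$$|V_+|+|F_+| \;=\; 1+m_++\tfrac{1}{2}(k+s_+),$$
so the defect is $1+\tfrac12(s_+-k)$. By minimality, a strand with one stub on each arc crosses the strand $ab$ exactly once and a strand with both stubs on the same arc not at all, whence $s_+=k+2t_+$ where $t_+\ge 0$ is the number of strands lying entirely on the $U_+$ side, and the defect is $1+t_+$. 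This exceeds $1$ whenever $t_+>0$ --- already for a three-node path network, cutting along a strand with one crossing, the solution space is $2$-dimensional. Fortunately your conclusion needs only the inequality $\ge 1$, so this is an error of statement rather than of method, but as written ``equals $1$'' is false and the computation you sketch (omitting the stubs) would not reproduce even the correct value.

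A second, genuinely missing step: your nontriviality argument (``if $f\equiv 0$ then $f^*$ is constant on the connected dual graph, hence zero since $f^*|_{F_-}=0$'') silently assumes $F_-\neq\emptyset$, and a priori $U_-$ could consist of vertex-cells only. It cannot, but this needs an argument: each medial edge separates a vertex-cell from a face-cell, and $U_-$ contains at least two cells --- a single cell would have its entire inner boundary on the one strand, which is impossible because at each medial vertex the two boundary arcs at a cell's corner belong to the two \emph{distinct} crossing strands, and every strand passes through at least one medial vertex. Hence $U_-$ contains an interior medial arc and therefore a face-cell. With that verified (together with the routine connectivity of the dual graph when the outer face is split into the $n$ exterior faces), your proof is complete.
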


For an example see Figure \ref{equilpot}.
Furthermore we have
\begin{lemma}\label{equilpotlemma}\cite{CdV}
If $G$ is minimal and $a,b$ are two stubs, let $C_+$ and $C_-$ be the two 
arcs of the boundary of the disk separated by $a$ and $b$.
Then $a$ and $b$ are connected by a medial strand
in $\G$ if and only if there are equilibrium potentials $f_+$
and $f_-$ and conjugates respectively 
$g_+,g_-$ such that $f_+,g_+$ are zero on nodes and
exterior faces of $C_+$,
and $f_-,g_-$ are zero on nodes and exterior faces of $C_-$. 
\end{lemma}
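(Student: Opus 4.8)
The plan is to handle the two directions separately, getting the ``only if'' direction for free from the preceding lemma and reserving the real work for the converse. For the forward direction, suppose $a$ and $b$ are joined by a medial strand $\gamma$. Then $\gamma$ divides the disk into $U_+$ and $U_-$, labeled so that $C_+$ bounds $U_+$ and $C_-$ bounds $U_-$. The preceding lemma applied to $\gamma$ produces an equilibrium potential together with its conjugate vanishing on every cell of $U_-$; in particular they vanish on the nodes and exterior faces lying along $C_-$, giving $f_-,g_-$. Applying the same lemma with the roles of $U_+$ and $U_-$ interchanged (the labeling is arbitrary) produces $f_+,g_+$. This settles the implication strand $\Rightarrow$ potentials.

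For the converse I would show that the common zero set of an equilibrium potential and its conjugate is bounded by medial strands, and then use minimality to identify that boundary. Given $f_-,g_-$ vanishing on $C_-$, fix the additive constant in $g_-$ so that $g_-=0$ on the exterior faces of $C_-$, and let $Z$ be the union of all cells on which the relevant function ($f_-$ on vertex-cells, $g_-$ on face-cells) vanishes. The discrete Cauchy--Riemann relation (\ref{CR}) shows that $Z$ has no ``corner'' at any medial vertex: around the medial vertex $v(e)$ of an edge $e=vv'$ with adjacent faces $F_1,F_2$, equation (\ref{CR}) reads $c_e(f(v')-f(v))=g(F_1)-g(F_2)$, so if three of the four surrounding cells lie in $Z$ then so does the fourth. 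Consequently $\partial Z$ passes straight through every medial vertex, i.e. $\partial Z$ is a union of medial strands together with arcs of the bounding circle. Since $f_-$ is not identically zero, $Z$ is a proper subset, so $\partial Z$ is nonempty, and because $Z$ contains every cell meeting $C_-$, the strand part of $\partial Z$ separates $C_-$ from the support of $f_-$.

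It remains to show that this separating family is a single strand joining $a$ to $b$. Here I would invoke minimality: for a minimal graph the medial strands are simple arcs joining distinct boundary stubs, any two strands cross at most once, and there are no closed or self-crossing strands (the structural description of minimal networks recalled in the next section). Thus $\partial Z$ meets the circle only at stubs, and the stubs it uses must separate $C_-$, all of whose cells lie in $Z$, from the rest; the only stubs with this property are $a$ and $b$. Feeding in the companion potential $f_+,g_+$, whose zero region is bounded by strands separating $C_+$, pins down the second endpoint and rules out the possibility that the strand leaving $a$ terminates at a stub interior to one of the arcs. The ``cross at most once'' property then forces $\partial Z$ to be exactly one chord from $a$ to $b$, which is the desired strand.

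The main obstacle is this last step: upgrading ``$\partial Z$ is a union of strands separating the two arcs'' to ``$\partial Z$ is the single strand from $a$ to $b$.'' This is precisely where minimality is indispensable, since without it the support boundary could involve lenses or repeated crossings and the clean chord structure, hence the stub-pairing interpretation, would break down. I expect the cleanest route is to run the zero-region argument for $f_-$ and $f_+$ simultaneously, so that the two separating strand-families share the chord through $a$ and the chord through $b$ and, by the single-crossing property, must coincide in a unique arc.
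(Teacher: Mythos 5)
A preliminary remark: the paper itself gives no proof of this lemma --- it is quoted from Colin de Verdi\`ere \cite{CdV} --- so your attempt has to be judged on its own terms rather than against a proof in the text. Your forward direction is correct and is surely the intended one: apply the preceding lemma to the strand joining $a$ and $b$, once with each labeling of the two complementary regions, to produce $f_-,g_-$ and $f_+,g_+$.

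The converse, however, contains a genuine gap, in fact two. First, the local step is overstated. Equation (\ref{CR}) does yield the three-implies-four rule (if three of the four cells around a medial vertex carry the value zero, so does the fourth), but this excludes only corners of $\partial Z$ at which \emph{three} of the four surrounding cells lie in $Z$. A corner at which exactly \emph{one} of the four cells lies in $Z$ --- a zero cell protruding into the support --- is perfectly consistent with (\ref{CR}): one can have $f(v)=0$ with $f(v')$, $g(F_1)$, $g(F_2)$ all nonzero, since $c_e f(v')=g(F_1)-g(F_2)$ imposes no vanishing. So the full zero set $Z$ of $(f_-,g_-)$ need not have boundary a union of medial strands, and ``$\partial Z$ passes straight through every medial vertex'' does not follow; you would need instead to work with a region grown from the cells along $C_-$ by the propagation rule, or to bring the harmonicity of $f_-$ and $g_-$ (not just the Cauchy--Riemann relation) into the local analysis. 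Second, and more seriously, the final step --- upgrading ``a family of strands separating $C_-$ from the support'' to ``the single strand joining $a$ to $b$'' --- is the heart of the converse, and you explicitly leave it as an expectation (``I expect the cleanest route\dots'') rather than an argument. The claim that ``the only stubs with this property are $a$ and $b$'' is likewise unargued: a priori the boundary of your region could reach the circle at a stub interior to $C_+$ or $C_-$, and ruling that out is precisely where minimality (no self-crossings, at most one crossing per pair of strands) must be used quantitatively, for instance by running the zero-region argument for $f_+$ and $f_-$ simultaneously and showing the two separating boundaries share both endpoints. As written, the converse is a plausible plan --- and it is in the right circle of ideas for \cite{CdV} --- but not a proof.
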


\begin{figure}[htbp]
\center{\includegraphics[width=2.5in]{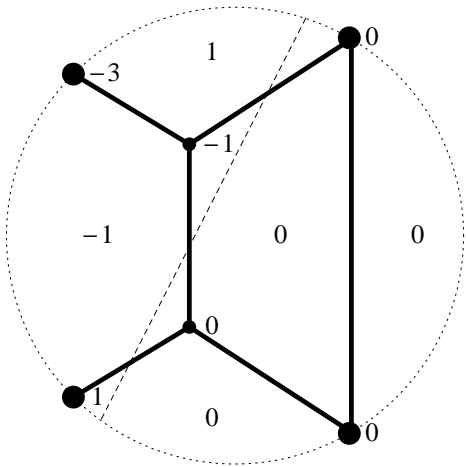}}
\caption{\label{equilpot}A network (solid) with a strand (dashed) and an
equilibrium potential and dual which are zero on one side. In this example
all conductances
are $1$.}
\end{figure}

\subsubsection{Minimality and electrical transformations}\label{YDeltamoves}

A circular planar network is said to be {\bf minimal} if the medial strands
have the properties 
\begin{enumerate}
\item There are no closed loops.
\item A strand does not intersect itself.
\item No two strands intersect more than once.
\end{enumerate}

The reason we define minimal graphs is that every circular planar network
is equivalent, in the sense of having the same response matrix, to 
a minimal circular planar network. Moreover minimal graphs are precisely
those in which the reconstruction has a unique solution \cite{CdV}.

In \cite{CGV} it was shown
that any circular planar network can be converted into a minimal 
circular planar network by a sequence of local transformations called
electrical transformations.
An {\bf electrical transformation} is a local rearrangement of the graph
and conductances
of one of the types shown in Figure \ref{electricalmoves}.
They consist of:
\begin{enumerate}
\item Removing a ``dead branch" (remove a non-node vertex of degree $1$ and the edge connecting it to the rest of the graph) or a self-loop.
\item Replacing two conductors $c_1,c_2$ in series with a single conductor
of conductance $\frac{c_1c_2}{c_1+c_2}$
(as long as the common vertex is not a node).
\item Replacing parallel edges of conductances $c_1,c_2$ with a single edge
of conductance $c_1+c_2$.
\item the ``Y-Delta" move, also called star-triangle move: an internal vertex of degree three with edges
of conductance $c_1,c_2,c_3$ is replaced by a triangle with edges of
conductance $\frac{c_1c_2}{c_1+c_2+c_3},\frac{c_1c_3}{c_1+c_2+c_3},
\frac{c_2c_3}{c_1+c_2+c_3}$ as in the diagram.
\end{enumerate}
The reverse of a Y-Delta move is a Delta-Y move and is also allowed.
The inverse transformation takes conductances $c_1,c_2,c_3$ on the ``Delta"
to conductances
$$\frac{c_1c_2+c_1c_3+c_2c_3}{c_1},\frac{c_1c_2+c_1c_3+c_2c_3}{c_2},\frac{c_1c_2+c_1c_3+c_2c_3}{c_3}$$
on the Y. For simplicity when we say ``Y-Delta move" we mean either
a Y-Delta or its reverse.
\begin{figure}
\center{\includegraphics[width=6in]{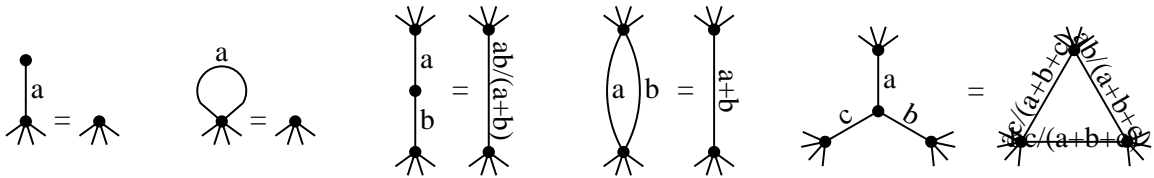}}
\caption{\label{electricalmoves}The electrical transformations.}
\end{figure}

We say that circular planar networks $\G,\G'$ on the same number of nodes
are {\bf topologically equivalent} if one can be converted to the other as graphs
(that is, ignoring conductances), using electrical transformations. 
We define circular planar networks
$\G,\G'$ to be {\bf electrically equivalent} if they have the same
response matrix. \label{elecequiv}

\subsubsection{Topological equivalence}
It is not hard to show that any circular planar network is topologically equivalent to a minimal
circular planar network, that is, can be converted to a minimal
network by electrical transformations. The idea is to simply chart how the moves act on medial strands:
Each of the first three types of moves decreases the number of crossings
of the medial strands (that is, the number of edges of the network). 
We isotope the strands, leaving their endpoints fixed,
until each strand
has no self-intersections and crosses each other strand at most once.
During this isotopy, each time a strand crosses an intersection of other
strands we perform a Y-Delta move on the graph; 
other singularities encountered will decrease the number of
crossings and so are finite in number.

This argument shows that for the purposes of determining
topological equivalence one can assume $\G$ and $\G'$ 
are minimal. 
What is a bit harder to show is that the ``minimization" process leads to a
unique minimal graph, up to Y-Delta moves. That is, if $\G'$ and $\G''$
are minimal graphs obtained from the same graph $\G$ then
$\G'$ can be converted to $\G''$ using only
Y-Delta moves (and Delta-Y moves). See Theorem \ref{topequiv} below.

For a given minimal circular planar graph $\G$ with $n$ nodes, 
the medial graph has $2n$ stubs, one starting to the left and right of each
node. The medial paths connect these stubs in pairs.
Let $\pi$ denote the pairing
of the stubs by the medial paths; if we label the stubs from $1$ to $2n$ in cclw
order then we can think of $\pi$ as a fixed-point free involution
of $\{1,2,\dots,2n\}$.

\begin{thm}[\cite{CdV}]\label{topequiv}
Two minimal graphs $\G,\G'$ are topologically equivalent
if and only if they have the same medial strand stub involution.
\end{thm}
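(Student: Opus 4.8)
The plan is to translate the entire statement into the language of medial strands and argue there, using the dictionary between electrical transformations and local moves on strand diagrams. Throughout I realize a minimal network $\G$ as its medial strand arrangement: a collection of $n$ curves in the disk with endpoints at the $2n$ fixed stubs on the bounding circle, subject to the minimality conditions (no closed loops, no self-intersection, no two strands crossing more than once). The stub involution $\pi$ records which pairs of stubs are joined by a strand. The one fact I will use repeatedly is the local correspondence: a Y-Delta move is exactly a \emph{triangle move} (sliding one strand across the transverse crossing of two others), while the other three electrical transformations correspond to removing a kink, a bigon, or a closed loop of strands. The choice of which $2$-coloring of the complementary cells gives the primal vertices is pinned down by the positions of the nodes, so minimal graphs correspond bijectively, up to isotopy rel endpoints, to reduced strand arrangements.

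For necessity, suppose $\G$ and $\G'$ are topologically equivalent, i.e.\ joined by a sequence of electrical transformations. Each transformation is supported in a disk disjoint from the boundary and, as just noted, only isotopes, kinks, or cancels strand segments; none of them reconnects strand endpoints. Since the stubs stay fixed, the pairing of stubs by strands is unchanged by every move in the sequence (the first three moves lower the crossing number and the Y-Delta keeps it fixed, but all preserve the matching). Hence $\G$ and $\G'$ have the same stub involution. This direction is essentially bookkeeping and I expect it to be routine.

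The substance is sufficiency: if two minimal graphs have the same involution $\pi$, they are topologically equivalent, in fact joined by Y-Delta moves alone. The key structural observation is that in a reduced arrangement two strands cross \emph{exactly once} precisely when their endpoints interlace on the circle, and not at all otherwise (a parity argument, using that they cross at most once); hence $\pi$ already determines the set of crossing pairs. Thus the two arrangements have the same strands and the same pairs of crossing strands, and can differ only in the planar arrangement of these crossings. The claim becomes: any two reduced arrangements realizing the same boundary matching are connected by triangle moves. This is the disk analogue of the Tits--Matsumoto theorem that any two reduced words for a permutation are connected by braid moves (the braid relation being precisely the triangle move), and I would prove it by induction on the number of strands. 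To peel off one strand $\gamma$ (say the one through stub $1$), I would use triangle moves to push $\gamma$ toward its boundary arc until no crossing is trapped in the lune it cuts off; $\gamma$ then sits in a canonical position determined by $\pi$, and deleting it yields a reduced arrangement on $2n-2$ stubs with the induced matching, to which induction applies. Carrying this out for $\G$ and for $\G'$ to reach the same canonical form, then concatenating, connects $\G$ to $\G'$ by triangle moves.

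The hard part will be the inductive ``pushing'' step: showing that whenever $\gamma$ is not yet in canonical position there is always a triangle move strictly decreasing the number of crossings trapped in the lune, i.e.\ that an innermost trapped crossing necessarily forms a flippable triangle together with $\gamma$. Making this termination-and-availability argument precise, and checking that triangle moves automatically keep the arrangement reduced so that no loop, self-crossing, or double crossing is ever created en route, is the crux; it is the same mechanism underlying the connectivity of simple pseudoline arrangements under triangle flips. Once that is in hand, the correspondence with Y-Delta moves upgrades the conclusion from ``topologically equivalent'' to ``related by Y-Delta moves,'' recovering the stronger statement foreshadowed just before the theorem.
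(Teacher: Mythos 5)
Your proposal is correct in outline, and it splits into a direction that matches the paper and one that genuinely departs from it. For sufficiency (same involution implies topological equivalence) you and the paper use the same mechanism: the paper isotopes both strand arrangements, endpoints fixed and without creating new crossings, to the canonical straight-chord arrangement, with each triple-point passage during the isotopy realized as a Y-Delta move; your inductive peeling of one strand at a time is a more explicit organization of exactly this straightening, and your observation that the involution already determines the crossing pairs (the interlacing parity argument) is the reason no tangency/bigon events are needed. The crux you flag --- that an innermost trapped crossing always yields a flippable triangle, so the pushing terminates while staying reduced --- is indeed the nontrivial content (the pseudoline-arrangement analogue of Tits--Matsumoto), but the paper is no more detailed on this point: it simply asserts the strands can be isotoped to chords without new crossings, so your proposal is if anything more honest about where the work lies. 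For necessity your route is genuinely different. The paper does \emph{not} argue combinatorially that the moves preserve the matching; instead it invokes Lemma \ref{equilpotlemma}: put conductances on the network, note that electrical transformations preserve the response matrix $L$, and use the fact that the supports of equilibrium potentials and their conjugates, hence the stub involution, are determined by $L$. Your direct bookkeeping --- kink, bigon, loop removal and triangle moves never reconnect stub endpoints --- is more elementary and avoids conductances and the analytic lemma entirely; what the paper's route buys in exchange is the stronger conclusion that the involution is an invariant of the response matrix itself (so even \emph{electrically} equivalent minimal networks share it), which is what the reconstruction theory later exploits. Both arguments are sound, and your combinatorial necessity argument is consistent with the paper's own earlier discussion of how the moves act on medial strands.
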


\begin{proof}
If two minimal graphs have the same medial
strand involution then one can isotope the strands without forming any
new crossings so the two graphs are isomorphic: for example
put the nodes and stubs in general position on a circle respecting their circular order; now isotope the strands to straight chords across the disk. 

Conversely, by Lemma \ref{equilpotlemma}
the response matrix of a minimal graph $\G$ determines its stub involution, 
since the supports
of equilibrium potentials and their harmonic conjugates are determined by $L$.
\end{proof}

\subsubsection{Well-connected and non-well-connected networks}

Recall that a network is well-connected if all non-interlaced minors of $L$ are strictly positive.
Equivalently, for any non-interlaced subsets of nodes $A,B$, there is a grove
connecting points of $A$ to those of $B$ in pairs. (If $|A|=|B|=k$ one can just look for a set of $k$
pairwise disjoint paths connecting points of $A$ to those of $B$; such a set is easily completed
to a grove by adding extra edges.)

What stub involution corresponds to a well-connected network?
It is not hard to construct well-connected graphs.
One nice family is illustrated in Figure \ref{stdgraphs}.
\begin{figure}
\center{\includegraphics[width=5in]{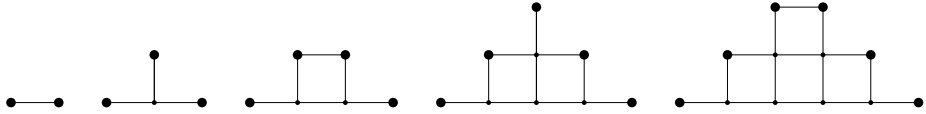}}
\caption{\label{stdgraphs}The standard networks $\Gamma_k$ for $k=2,\dots,6$.
These are well-connected.}
\end{figure}
We leave it to the reader to show that 
these graphs are well-connected, and have stub involution
$\pi_{\text{well}}$ which pairs $i$ with $n+i$ for each $i$.
That is, each stub is paired with the diametrically opposite stub. 

\begin{cor} For a minimal graph $\G$ on $n$ nodes, the following are equivalent:
\begin{enumerate}\item $\G$
is well-connected.
\item Its stub involution
is $\pi_{\text{well}}$. 
\item $\G$ is topologically equivalent to $\Gamma_n$.
\end{enumerate}
\end{cor}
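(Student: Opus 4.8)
The plan is to prove the three conditions equivalent by first establishing (2)$\Leftrightarrow$(3) directly from Theorem \ref{topequiv}, and then closing the loop with (3)$\Rightarrow$(1) and (1)$\Rightarrow$(2). The equivalence (2)$\Leftrightarrow$(3) is essentially free: the standard networks $\Gamma_n$ were observed to have stub involution $\pi_{\text{well}}$, so by Theorem \ref{topequiv} a minimal $\G$ is topologically equivalent to $\Gamma_n$ if and only if $\G$ has the same stub involution, namely $\pi_{\text{well}}$. Thus essentially all the work lies in relating well-connectedness to the stub involution.

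For (3)$\Rightarrow$(1) I would first observe that well-connectedness is really a property of the underlying graph, independent of the (positive) conductances. Indeed, for a noninterlaced pair $A,B$ of equal size, the necessity argument for Theorem \ref{CdVthm} (via Theorem \ref{LABC}) shows that $\det L_A^B$ is a single grove partition sum divided by $\Zunc$; since all conductances are positive and $\Zunc>0$, this minor is strictly positive exactly when $\G$ carries at least one grove pairing $A$ to $B$, equivalently $|A|$ pairwise disjoint connecting paths, which is a condition on the graph alone. Since the electrical transformations preserve the response matrix $L$, they preserve positivity of these minors, hence preserve well-connectedness; so well-connectedness is an invariant of topological-equivalence classes. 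As $\Gamma_n$ is well-connected, any $\G$ topologically equivalent to it is well-connected.

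For (1)$\Rightarrow$(2) I would argue by the contrapositive, passing through the medial picture. The key combinatorial fact is that a fixed-point-free involution of the $2n$ stubs has all of its chords pairwise crossing if and only if it is the antipodal pairing $\pi_{\text{well}}$: a chord $\{i,\pi(i)\}$ is crossed by every other chord precisely when each of the remaining $n-1$ chords has one endpoint on each side of it, which forces both sides to contain $n-1$ stubs, i.e. $\pi(i)=i+n$. So if the involution is not $\pi_{\text{well}}$, some two medial strands fail to cross. I then want to convert such a non-crossing (equivalently non-diametric) configuration into a noninterlaced pair $A,B$ with $|A|=|B|$ that admits no system of disjoint connecting paths, so that by the grove interpretation above $\det L_A^B=0$ and $\G$ is not well-connected. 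Here Lemma \ref{equilpotlemma} is the tool: a strand from $a$ to $b$ produces equilibrium potentials (and conjugates) vanishing on one boundary arc, which certify that the nodes on that arc are screened off and bound the number of disjoint paths one can push across the strand.

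The main obstacle is exactly this last conversion: making precise, using Lemma \ref{equilpotlemma} together with the interpretation of medial strands as geodesic separators, that a non-diametric strand forces the maximum number of vertex-disjoint paths between the two complementary arcs to drop strictly below the common size of a balanced noninterlaced pair $A,B$. This is a Menger / max-flow--min-cut statement in which the strands play the role of cuts, and the delicate bookkeeping of which nodes and exterior faces lie on which side of the offending strand (and hence which minor vanishes) is where the real effort lies. Everything else — the two combinatorial observations and the invariance argument in (3)$\Rightarrow$(1) — is routine given the results already established.
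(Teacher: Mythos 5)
Your reduction of the corollary to the implication (1)$\Rightarrow$(2) is sound, and the two easy pieces are handled correctly: (2)$\Leftrightarrow$(3) is immediate from Theorem \ref{topequiv} once one knows $\Gamma_n$ has involution $\pi_{\text{well}}$, and your (3)$\Rightarrow$(1) argument --- well-connectedness is a graph property via the grove interpretation of noninterlaced minors (Theorem \ref{LABC}), electrical transformations preserve $L$ with positive conductances going to positive conductances, and $\Gamma_n$ is well-connected --- is fine. (For the record, the paper offers no written proof: the corollary is stated as fall-out of Theorem \ref{topequiv}, the reader exercise on $\Gamma_n$, and the results of \cite{CdV}.) The genuine gap is that (1)$\Rightarrow$(2), which is the entire content of the statement, is left as an acknowledged ``main obstacle,'' and the mechanism you sketch for it would not work as stated. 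You propose a Menger/max-flow bound in which the non-diametric strand $\sigma$ plays the role of a cut. But the only bound minimality gives you is: vertex-disjoint paths from one arc to the other must use distinct edges of $\G$ lying on $\sigma$, and the number of such edges equals the number of strands crossing $\sigma$. If the smaller side of $\sigma$ contains $m$ stubs and hence about $p\approx m/2$ nodes, that crossing number can be as large as $m\approx 2p$, i.e.\ roughly \emph{twice} the number of nodes available to form one side of a balanced noninterlaced pair. So for every pair $A,B$ with $|A|=|B|=k\le p$ that you can build, the cut bound gives only ``at most $\approx 2p$ disjoint paths,'' which never drops below $k$; no minor is forced to vanish by this counting, and the contrapositive does not close.

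The step can be completed, but by linear algebra rather than path counting, and this is where Lemma \ref{equilpotlemma} actually bites. If $\pi\ne\pi_{\text{well}}$, pick a non-diametric strand $ab$ and let $C_+$ be the complementary arc containing at least as many nodes, so $A:=\No\cap C_+$ and $B:=\No\cap C_-$ satisfy $|A|\ge|B|$. Lemma \ref{equilpotlemma} provides a nonzero equilibrium potential $f_+$ with conjugate $g_+$ vanishing on the nodes and exterior faces of $C_+$. Then $u:=f_+|_{\No}$ is nonzero (an equilibrium potential vanishing at \emph{all} nodes is identically zero by uniqueness of the Dirichlet problem) and is supported on $B$; moreover $Lu$ vanishes on $A$, since the boundary current out of a node of $C_+$ is the difference of $g_+$ across its two adjacent exterior faces, which are all zero. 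Hence $L_A^B\,(u|_B)=0$ with $u|_B\ne 0$, so $L_A^B$ has rank strictly less than $|B|$, and \emph{every} $|B|\times|B|$ minor $\det L_{A'}^{B}$ with $A'\subseteq A$ vanishes; these pairs $(A',B)$ are noninterlaced, contradicting well-connectedness. The ``delicate bookkeeping'' you defer is real but localized: one must handle the nodes adjacent to the stubs $a,b$ (whose exterior faces may straddle the strand) and check $|A|\ge|B|$ in the degenerate splittings --- this is essentially Colin de Verdi\`ere's argument, to which the paper implicitly delegates. As it stands, your proposal identifies the right tool but substitutes for the decisive rank-deficiency argument a cut-counting heuristic that is quantitatively insufficient.
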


Another nice family of well-connected graphs which are in addition
circularly symmetric (for $n$ odd) and nearly circularly symmetric 
(for $n$ even) is given in Figure \ref{circwellconn}. See Figure \ref{rectwellconn} for a third family.
\begin{figure}
\center{\includegraphics[width=6in]{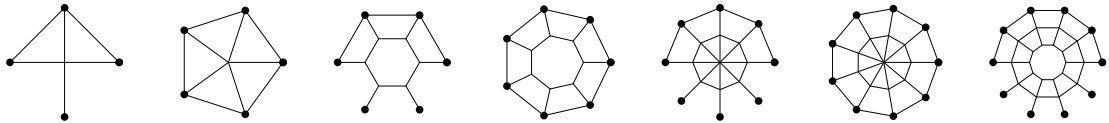}}
\caption{\label{circwellconn}Some (almost) circularly symmetric well-connected graphs.}
\end{figure}

\begin{figure}
\center{\includegraphics[width=4in]{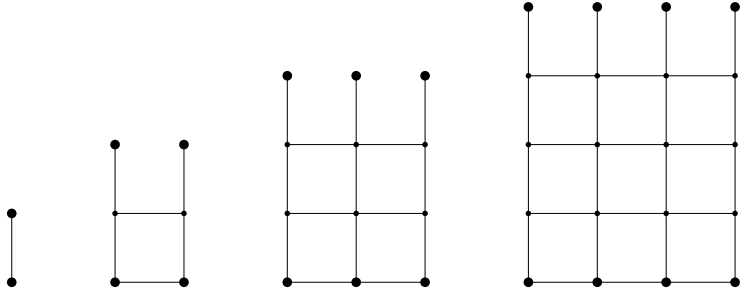}}
\caption{\label{rectwellconn}A different family of well-connected graphs for even $n$.}
\end{figure}

What about non-well connected graphs?
Is there any structure to the set of topological equivalence classes of 
$n$-node networks? See below.

\subsubsection{Electrical equivalence}
We discussed topological equivalence above.
Regarding electrical equivalence, 
we have
\begin{thm} Electrical transformations do not change the response matrix.
\end{thm}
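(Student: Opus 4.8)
The plan is to characterize the response matrix variationally and then check that each of the four moves preserves the relevant variational quantity. Recall from the discussion of the Dirichlet problem that for boundary data $u$ on $\No$ the harmonic extension $f_u$ is precisely the minimizer of the Dirichlet energy $Q$ among all extensions of $u$, and that $\langle u,\Lambda u\rangle=\langle f_u,\Delta f_u\rangle=Q(f_u)$. Combining these gives the clean formula
$$\langle u,\Lambda u\rangle=\min_{f|_{\No}=u}Q(f),$$
so the quadratic form of $\Lambda$ is the ``effective Dirichlet energy'' seen from the nodes. Since $\Lambda$ is symmetric it is determined by this quadratic form (by polarization), and since $L=-\Lambda$ it suffices to prove that every electrical transformation preserves the function $u\mapsto\min_{f|_{\No}=u}Q(f)$ for all $u$.

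The key structural observation that makes this a local check is that each move alters $Q$ only inside a small gadget and eliminates (or leaves free) at most one internal vertex $w$. Writing $Q(f)=Q_{\mathrm{gadget}}(f)+Q_{\mathrm{rest}}(f)$, where $Q_{\mathrm{rest}}$ does not involve $f(w)$ and the gadget shares only the vertices $v_i$ (untouched by the move) with the rest, we may minimize in stages:
$$\min_{f|_{\No}=u}Q(f)=\min_{f}\Big[Q_{\mathrm{rest}}(f)+\min_{f(w)}Q_{\mathrm{gadget}}(f)\Big].$$
Thus it is enough to verify, move by move, that after minimizing over the eliminated internal vertex the gadget energy equals the energy of the replacement gadget, as a function of the shared vertices $v_i$.

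I would then dispatch the first three moves quickly. A self-loop contributes $c(f(v)-f(v))^2=0$, and a dead branch $e=vw$ with $w$ internal of degree one contributes only $c(f(v)-f(w))^2$, whose minimum over the free value $f(w)$ is $0$ (attained at $f(w)=f(v)$); either way removal changes nothing. For parallel edges $c_1(f(v_1)-f(v_2))^2+c_2(f(v_1)-f(v_2))^2=(c_1+c_2)(f(v_1)-f(v_2))^2$ is immediate. For a series pair $v_1$--$w$--$v_2$, minimizing $c_1(f(v_1)-f(w))^2+c_2(f(w)-f(v_2))^2$ over the free $f(w)$ gives minimizer $f(w)=\frac{c_1f(v_1)+c_2f(v_2)}{c_1+c_2}$ and minimum value $\frac{c_1c_2}{c_1+c_2}(f(v_1)-f(v_2))^2$, exactly the energy of the replacement conductor.

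The one genuine computation is the Y--Delta move, which I expect to be the main (though still routine) obstacle. Here $w$ is internal of degree three joined to $v_1,v_2,v_3$ by conductances $c_1,c_2,c_3$; with $s=c_1+c_2+c_3$, minimizing $\sum_i c_i(f(v_i)-f(w))^2$ over $f(w)$ yields minimizer $f(w)=\frac1s\sum_i c_if(v_i)$ and minimum value $\sum_i c_if(v_i)^2-\frac1s\big(\sum_i c_if(v_i)\big)^2$. It then remains to expand this and compare with the Delta energy $\frac{1}{s}\sum_{i<j}c_ic_j(f(v_i)-f(v_j))^2$; after clearing $s$, both sides give coefficient $c_i(c_j+c_k)$ on each $f(v_i)^2$ and $-2c_ic_j$ on each cross term $f(v_i)f(v_j)$, confirming equality. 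With all four gadget identities in hand, the staged minimization shows the effective energy, and hence $\Lambda$ and $L=-\Lambda$, is unchanged, completing the proof.
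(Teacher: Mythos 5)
Your proof is correct, but it takes a genuinely different route from the paper's. The paper argues at the level of \emph{current flow}: it shows, move by move, that the harmonic potential on the ``before'' network induces the same currents as on the ``after'' network, using Kirchhoff's law at the eliminated vertex (e.g.\ for Y--Delta it imposes $c_1df_1+c_2df_2+c_3df_3=0$ and solves for the unique $C_{ij}=\frac{c_ic_j}{c_1+c_2+c_3}$ making the three boundary currents match for arbitrary potential drops). You instead characterize the response matrix variationally, via $\langle u,\Lambda u\rangle=\min_{f|_{\No}=u}Q(f)$ together with polarization, and then verify that each gadget's energy, after minimizing out the eliminated internal vertex, equals the replacement gadget's energy --- in effect a Schur-complement elimination of $f(w)$. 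Both reductions to a local check are sound (your staged-minimization identity is valid because $Q_{\mathrm{rest}}$ does not involve $f(w)$, and the paper's move definitions guarantee the eliminated vertex is never a node), and your four gadget computations, including the Y--Delta coefficient match $c_i(c_j+c_k)$ on squares and $-2c_ic_j$ on cross terms, are all correct. What each approach buys: your energy argument makes the ``it suffices to check locally'' step fully rigorous with one formula, needs no uniqueness-of-solution discussion, and yields the Y--Delta identity as a direct polynomial expansion rather than a solve-for-$C_{ij}$ computation; the paper's current-flow argument is closer to the electrical intuition, exhibits the explicit correspondence of potentials and currents between the two networks (not just equality of the effective energy), and is the form of the argument that generalizes most naturally to the bundle-Laplacian setting used later in the paper, where the quadratic form need not be real symmetric.
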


\begin{proof}
It suffices to show that electrical transformations preserve current flow.
More precisely, the current flow on the ``before" network will be
equal to the current flow
on the ``after" network if defined appropriately on any added edges.

This is clear for transformations of type $1$ (removing a dead branch or self-loop)
since these edges have no current flow.
For a type $3$ move (parallel reduction) the current flow on $c_1$ is 
$c_1 df(e)$ where $f$ is the corresponding potential on the vertices.
Similarly the current flow across $c_2$ is $c_2df(e)$. The current flow
on the combined edge is $(c_1+c_2)df(e)$ which is the sum of the current
flows on the individual edges. 

For a type $2$ (series) transformation, the relevant equation is
$c_1df(e_1)=c_2df(e_2)$ since no current is lost at the center vertex.
This implies that 
$$c_1df(e_1)=\frac{c_1c_2}{c_1+c_2}(df(e_1)+df(e_2)),$$
and $df(e_1)+df(e_2)$ is the potential drop across the combined edges.

A similar computation works for the Y-Delta move. When converting from a Y
to a Delta, the current along the edges of the Y directed towards
the central vertex is $c_1df_1,c_2df_2,c_3df_3$ (where the $df_i$
are the potential drops) which must add to zero since no
current is lost at the central vertex:
$$c_1df_1+c_2df_2+c_3df_3=0.$$
A short computation then shows that
\begin{eqnarray*}c_1df_1 &=& C_{12}(df_1-df_2)+C_{13}(df_1-df_3),\\
c_2df_2 &=& C_{12}(df_1-df_2)+C_{13}(df_1-df_3),\\
c_3df_3 &=& C_{12}(df_1-df_2)+C_{13}(df_1-df_3)
\end{eqnarray*}
for arbitrary $df_1,df_2,df_2$ when and only when $C_{ij}=\frac{c_ic_j}{c_1+c_2+c_3}$.
\end{proof}

As a consequence of this theorem,
for determining the space of response matrices of planar
networks we need only consider minimal networks.

\subsection{Reconstruction and the space of networks}\label{reconsection}

Given a matrix which satisfies the conditions and inequalities
of Theorem (\ref{CdVthm}),
is it the response matrix of a circular planar network? How can one determine
the conductances from $L$?
This is the {\bf Reconstruction Problem} or {\bf EIT problem}. 
It is closely related
to the {\bf Electrical Equivalence Problem}: what sets of minimal networks
are electrically equivalent?

\subsubsection{Algorithm}

Both \cite{CIM} and \cite{CdV} gave an iterative algorithm 
for reconstruction on any minimal graph.
The idea is as follows. 
Any network is equivalent to one which has either an edge connecting
adjacent nodes or a node of degree $1$ (take a strand $ab$
with the property
that there are no strands completely contained on one side $U_+$ of it;
using Y-Delta moves, 
move all crossings in $U_+$ of other strands to the other side $U_-$. The node 
adjacent to $a$ and inside $U_+$ will either have degree $1$ or be connected
to its adjacent node on the other side of $a$). 

One can now compute the conductance on this edge $e$ from $L$:
find the potential with values
zero on side $U_+$ of strand $ab$. Then the conductance $c_e$ is easily obtained
from the current.

Removing the edge $e$ (if it connected nodes) or contracting the edge $e$
(if it is adjacent to a degree-$1$ node)
results in a new network which is still minimal, and the new response matrix
is a simple rational function of the old response matrix. 

This algorithm shows that the conductances are \emph{rational functions of the $L_{ij}$}. 
However it is not so easy to 
use this to give an explicit formula for the conductances
as a function of the response matrix.
In \cite{KW2} an explicit formula is given 
for the reconstruction problem for the standard graphs $\Gamma_n$
of Figure \ref{stdgraphs},
which expresses $c_e$ as a ratio of polynomials in the $L_{ij}$
which are Pfaffians of skew-symmetric matrices formed from
$L$. This can be considered in some sense the best possible result for
$\Gamma_n$ since the polynomials of the $L_{ij}$ 
are shown there to be irreducible in general.
However this does not preclude the possibility that for other
well-connected networks there is a simpler formula: see \cite{KW4}.

\old{
By allowing the conductances on the edges of $\Gamma_n$ to vary in
$(0,\infty)$ we get all response matrices of well-connected networks,
and the map from conductances to matrices is injective.
Thus the space of well-connected response matrices is 
homeomorphic to an open ball of dimension $\begin{pmatrix}n\\2\end{pmatrix}$,
in fact birational with $\R_+^{n(n-1)/2}$.
}

\subsubsection{Partial order on networks}

By allowing some conductances to go to zero and others to tend to $\infty$
a network degenerates (topologically, we remove edges
of conductance zero and contract edges of conductance $\infty$).

There is a partial order on networks under this degeneration.
We define $\G\le \G'$ if a network equivalent to $\G$ can be obtained from a network equivalent to $\G'$ by contraction and deletion of edges.

One can show that
every minimal network is a minor (in the sense of contraction and/or 
deletion of edges)
of a well-connected network. So the well-connected network is the unique 
maximal element in this partial order. If we allow disconnected networks,
the networks which are minimal are those whose stub involution is a 
non-crossing matching; see Figure \ref{partialorder3}.

\begin{lemma}\label{sequence} If $\G,\G'$ are minimal circular planar networks
and $\G\le \G'$ then there is a sequence
$\G_1\le \G_2\le\dots\le\G_k=\G'$ of minimal networks
where $\G_1$ is equivalent to $\G$ and
$\G_i$ is obtained from a network equivalent to $\G_{i+1}$ by deletion
or contraction of a single edge.
\end{lemma}
\begin{proof} First suppose that $\G$ is obtained from $\G'$ by deleting
a single edge $e$ (the case of contraction is equivalent by duality).
Here is how one minimizes the resulting network:
let $a,b,c,d$ in cclw order 
be the stubs of the strands $ac$ and $bd$ crossing at edge $e$.
Let $U_1,U_2,U_3,U_4$ be the four arcs of the circle separated by $a,b,c,d$,
so that $U_1$ is the arc from $a$ to $b$, etc. 
Using Y-Delta moves we can first comb strands so that
strands connecting stubs from $U_1$ to $U_3$ cross ``left" of $e$,
that is, do not enter the region delimited by the path $bec$, 
and strands connecting stubs from $U_2$ and $U_4$ cross ``below" $e$,
that is, do not enter the region delimited by the path $ced$.
See the Figure \ref{abcd}.
Now we resolve the crossing at $e$, creating a strand connecting $a$ and $b$
and another connecting $c$ and $d$. The strand $ab$ forms a bigon with any
strand from $U_2$ to $U_4$. To minimize
the resulting network we resolve all crossings of these strands with the 
semi-strand from $b$ to $e$ as indicated in Figure \ref{abcd}. The resulting network is now 
minimized (and any minimization results in a network equivalent to this one).
\begin{figure}[htbp]
\center{\includegraphics[width=2in]{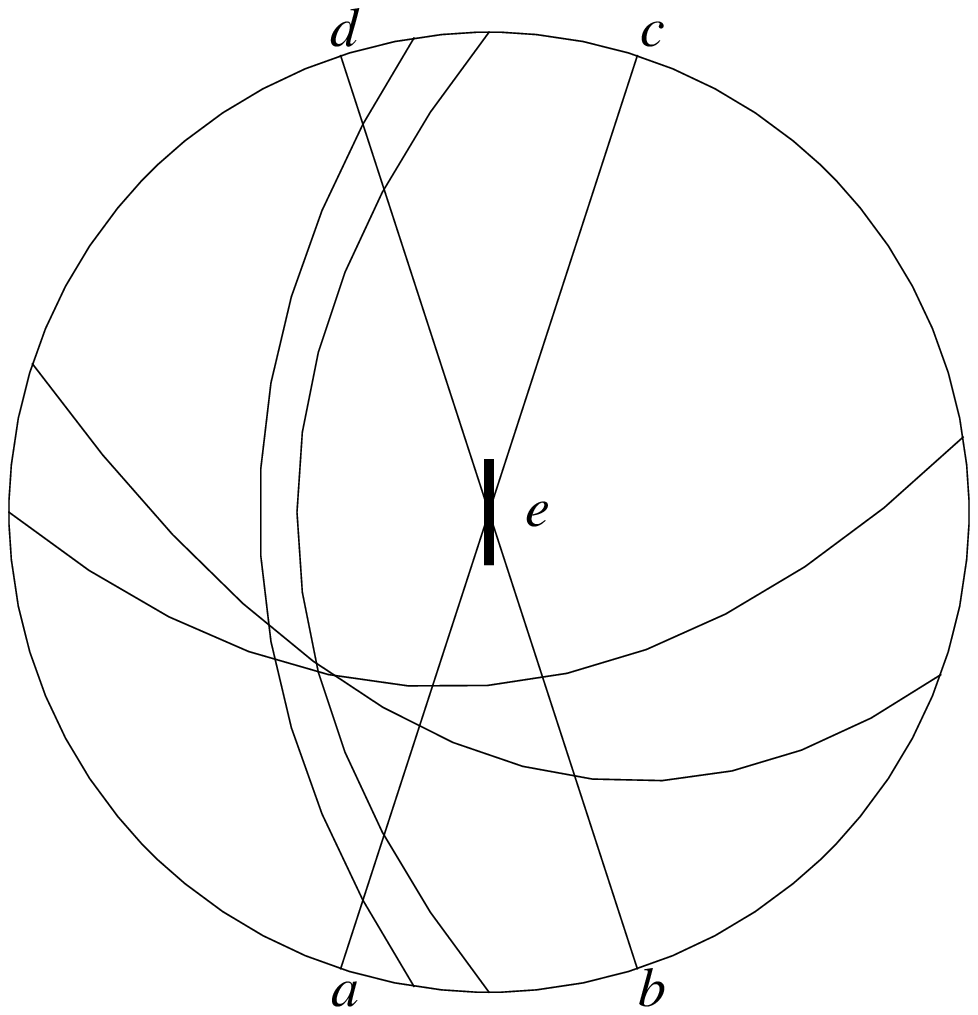}\hskip1in\includegraphics[width=2in]{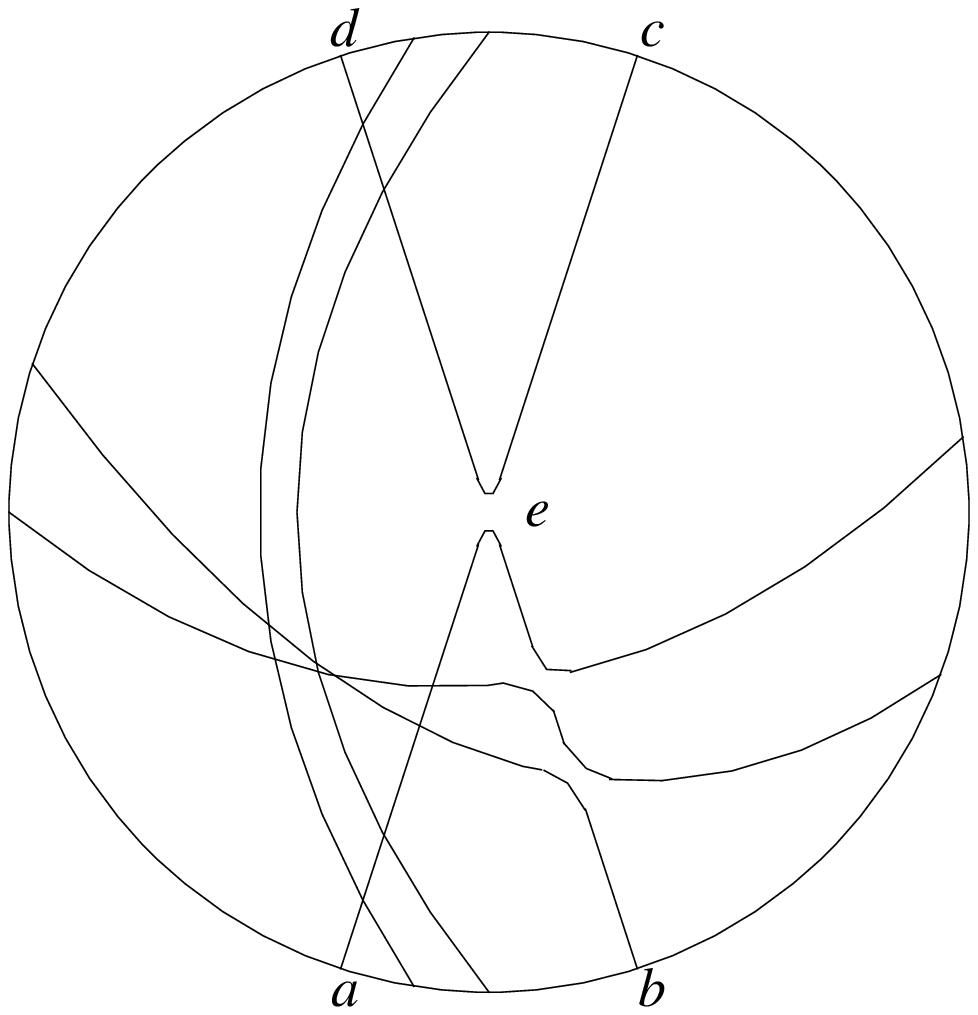}}
\caption{\label{abcd}When a conductance $e$ goes to zero,
comb strands left and down (left figure); then resolve strand crossings
between $b$ and $e$ (right figure).
}
\end{figure}
If we perform this sequence of resolutions in order
of the crossings along the semistrand $be$ from the boundary towards the center, 
(resolving the crossing at $e$ last) 
the intermediate networks are all minimal.
This completes the proof in the case when $\G$ is obtained from $\G'$
by resolution of a single crossing.

In the general case, 
we contract and delete multiple edges of $\G'$, then minimize
to get $\G$. Take a path in the space of conductances which sends the 
desired conductances to zero or infinity, one after another, in any order.
Let $L(t)$ be the corresponding path in the space of response matrices.
When we minimize the resolution of a single crossing as above,
the resulting $L$ matrix does not change (in the sense that no additional minors
become zero), since minimization leaves $L$ invariant.
Thus the sequence of resolutions is replaced with  
a sequence of ``minimal'' resolutions, that is, resolutions which preserve minimality. 
\end{proof}

This partial order on networks
induces a partial order on fixed-point free involutions of 
$\{1,\dots,2n\}$.  This partial order is graded by the number of crossings
in the involution, that is, the number of pairs $i<j<k<\ell$ with
$i$ paired with $k$ and $j$ paired with $\ell$. Moving one level down in the partial order
corresponds to resolving exactly one crossing which is adjacent to the boundary
(in the sense that there is a path in the disk from the crossing to the boundary circle which
does not meet any strand). Algebraically, moving up one level corresponds
to the operation of conjugating the involution by a transposition in such a way as to 
increase the number of crossings by $1$.

See Figure \ref{partialorder3} for the Hasse diagram of the partial order in the 
case $n=3$. 
\begin{figure}[htbp]
\center{\includegraphics[width=4.5in]{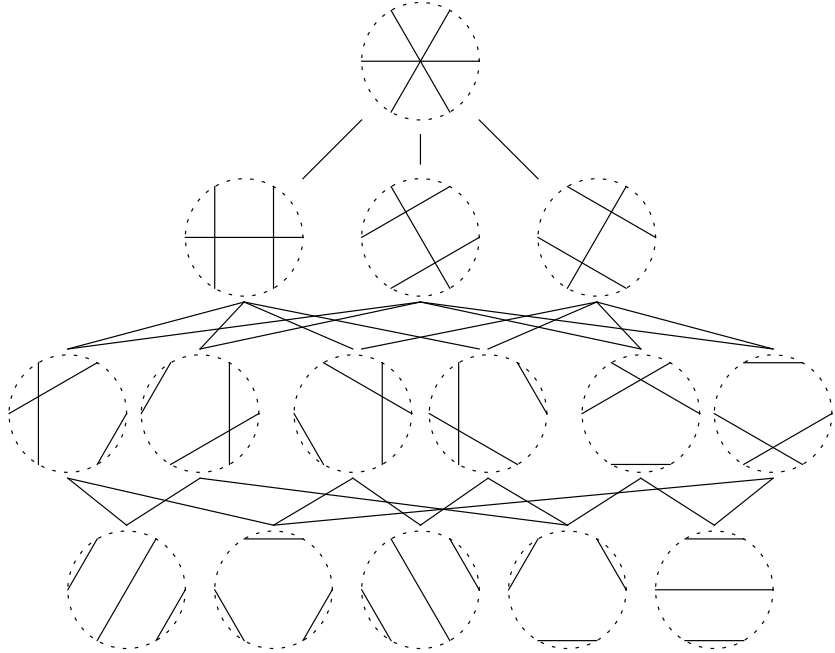}}
\caption{\label{partialorder3}The partial order on circular planar networks
with three nodes. Only the medial graphs are shown.}
\end{figure}

This partial order describes the cell structure of the space of networks.

\subsubsection{Minimal sets of inequalities}

What is the structure of the set of response matrices of all circular 
planar networks?
The set $\Omega$ of response matrices of circular planar networks on $n$ nodes
is a closed subset of $\R^{n(n-1)/2}$ if we
use the coordinates $\{L_{ij}\}_{n\ge i>j\ge 1}.$ It is a semi-algebraic set,
that is, defined by a set of algebraic inequalities $\det L_A^B\ge 0$
for the non-interlaced subsets $A,B$ of nodes. Its interior $\Omega^+$ is defined
by using strict inequalities, and corresponds to the well-connected networks.

The number of inequalities defining $\Omega$ is exponential in $n$. 
However there is a smaller set of $n(n-1)/2$ 
inequalities which defines $\Omega^+$. See below.

If we consider a topological equivalence class 
(defined by a stub involution $\pi$) of graphs,
the response matrices of networks supported on these graphs form a subset
$\Omega_\pi\subset\Omega$, which is a subset of the boundary of $\Omega$
if $\pi\ne\pi_{\text{well}}$. 
$\Omega$ has the structure of a cell-complex, in which 
the $\Omega_\pi$ are the cells, see Lam and Pylyavskyy \cite{LP1}. Is each cell of dimension $k$ defined by
$k$ minor inequalities (and $n(n-1)/2-k$ equalities)?
 The answer is probably yes, but at the moment our understanding
is limited. A similar situation where the cell structure is explicitly worked out is
Postnikov \cite{Postnikov} who deals with totally
positive/totally nonnegative matrices and the totally nonnegative Grassmannian.

Let us discuss here only the minimal sets of inequalities defining
the set $\Omega^+$. 
Somewhat remarkably, there are many different sets $\begin{pmatrix}n\\2\end{pmatrix}$
minor determinants of $L$ whose positivity implies the positivity
of all non-interlaced minors. The situation resembles that of a cluster algebra
but so far we have been unable to find the relevant cluster structure.

One easy-to-remember set of $\begin{pmatrix}n\\2\end{pmatrix}$ minors 
are the {\bf central minors}, defined as follows. Suppose first $n$ is odd. 
Define $M_{i,1}$ for $1\le i\le (n-1)/2$ to be the minor
$L_A^B$ with $A=\{1,2,\dots,i\}$ and $B=\{(n-1)/2+1,(n-1)/2+2,\dots,(n-1)/2+i\}$.
Let $M_{i,j}$ be the minor obtained from $M_{i,1}$ by rotating the indices
cyclically by $j-1$, so that $A=\{j,j+1,\dots,j+i\}$ and likewise for $B$.

These minors are called central minors since, if we arrange the $n$ points 
evenly spaced on a circle, and connect each element of $A$ to the corresponding
point of $B$ opposite it, we get a set of $i$ parallel chords which are
``central" in the sense that they
are as close to a fixed diagonal of the circle as possible while remaining disjoint
from each other. See Figure \ref{centralminors} for an example.
\begin{figure}
\center{\includegraphics[width=5in]{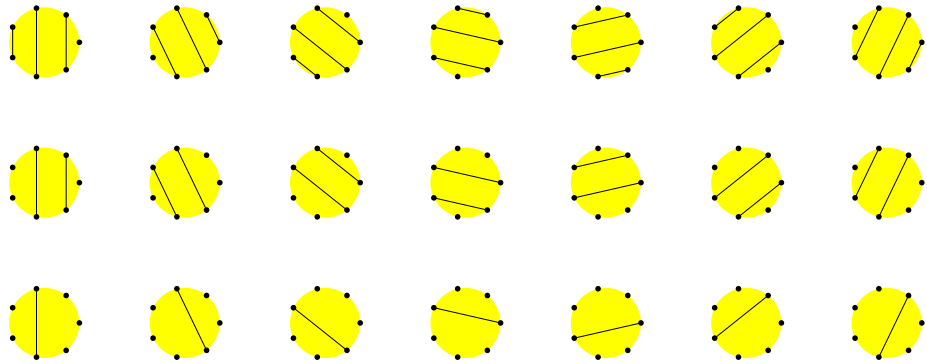}}
\caption{\label{centralminors}The $21$ central minors for a well-connected network on $7$ nodes.}
\end{figure}
In the case $n$ is even we modify the above; define $M_{i,j}$ for $i$ even as before, but for $i$ odd we have, for each diagonal, two choices of positioning
of the chords to make them ``most central"; choose one of these arbitrarily.
These ``off-center" minors are part of our set of central minors.
See Figure \ref{centralminorsix} for a natural choice.

\begin{figure}
\center{\includegraphics[width=5in]{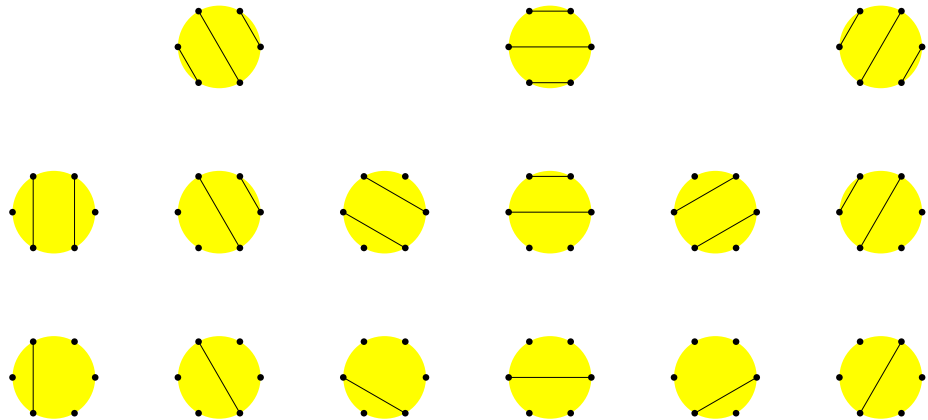}}
\caption{\label{centralminorsix}The $15$ central minors for a well-connected network on $6$ nodes.}
\end{figure}
\begin{thm}[\cite{KW4}]\label{centralminorthm} If the central minors $\{M_{i,j}\}$ are positive then all
noninterlaced minors are positive.
\end{thm}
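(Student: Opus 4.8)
The plan is to reduce the positivity statement to a purely combinatorial question about medial strands, and then to prove that the central connectivities force the well-connected stub involution. Throughout I take $L$ to be a response matrix (symmetric, with kernel the constants, and all noninterlaced minors nonnegative, i.e.\ $L\in\Omega$), so that the content is to upgrade the weak inequalities $\det L_A^B\ge 0$ to strict ones once the central minors are known to be positive.

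First I would realize $L$ by a minimal network. Since electrical transformations preserve $L$ and every circular planar network is equivalent to a minimal one, there is a minimal $\G$ with response matrix $L$. By Theorem \ref{LABC}, for noninterlaced $A,B$ with $|A|=|B|=k$ the minor $\det L_A^B$ equals a single grove partition function divided by $\Zunc>0$; hence $\det L_A^B>0$ if and only if $\G$ carries $k$ pairwise vertex-disjoint paths realizing the unique planar pairing of $A$ with $B$. Thus positivity of each minor is a topological property of $\G$, and by the equivalences recorded in section \ref{YDeltamoves} (well-connected $\iff$ stub involution $\pi_{\mathrm{well}}$ $\iff$ topologically equivalent to $\Gamma_n$) the conclusion ``all noninterlaced minors are positive'' is the same as ``every medial strand of $\G$ is a diameter,'' i.e.\ $\pi=\pi_{\mathrm{well}}$. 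So the theorem becomes: if all central connectivities hold, then $\pi=\pi_{\mathrm{well}}$.

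The bridge between disjoint paths and strands is a min-cut duality for minimal networks: the maximal number of vertex-disjoint paths joining a boundary arc of nodes to a disjoint boundary arc equals the number of medial strands joining the two flanking arcs of stubs. This is a Menger-type statement which I would deduce from minimality together with the equilibrium-potential description of strands in Lemma \ref{equilpotlemma}. The central minor $M_{i,j}$ is precisely the connectivity across the most central family of $i$ near-diameter chords; when $\pi=\pi_{\mathrm{well}}$ each such family is crossed by exactly $i$ diameters and the $i$ disjoint paths exist, so all central minors are automatically positive in that case. The heart of the argument is the converse detection step, which I would run contrapositively: if $\pi\neq\pi_{\mathrm{well}}$ then some strand is not a diameter, and choosing a shortest non-diameter chord exhibits a boundary arc across which strictly fewer than $i$ strands pass for a suitable central pair $(A,B)$; the min-cut bound then forces the corresponding $\det L_A^B$ to vanish, contradicting the assumed positivity of the central minors.

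The main obstacle is exactly this detection step: one must show that the central families (including the \emph{off-center} families needed when $n$ is even) are cofinal among all separating strand families, so that every deviation from the all-diameters involution is witnessed by the failure of at least one central minor. This is where the specific geometry of how disjoint paths cross strands in minimal networks, and the combinatorics of the central index set $\{M_{i,j}\}$, really enter; the surrounding translation is routine. I would organize the detection step as an induction on the number of crossings of $\pi$ (the grading of the partial order following Lemma \ref{sequence}), peeling off one boundary-adjacent crossing at a time and tracking which central minor each such crossing obstructs.
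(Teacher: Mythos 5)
There is a genuine gap, and you have named it yourself: the ``detection step'' is the entire theorem, and your proposal contains no argument for it. You reduce the claim (after assuming $L$ is a response matrix) to the combinatorial assertion that positivity of the central connectivities forces the stub involution $\pi_{\mathrm{well}}$, and then say you ``would'' prove this by induction on the number of crossings, peeling off boundary-adjacent crossings ``and tracking which central minor each such crossing obstructs'' --- but no mechanism is given for why a non-diameter strand must be witnessed by a \emph{central} family, and the even-$n$ case is a concrete obstruction to any naive cut argument: the central set contains only one of each pair of minimally off-center minors, chosen \emph{arbitrarily}, so a shortest non-diameter chord could a priori be witnessed only by the mirror minor that was \emph{not} included. (The paper's proof needs a dedicated extra condensation move precisely to relate the two opposite minimally off-center contiguous crossings.) The Menger-type duality between vertex-disjoint path families and medial strands joining flanking arcs is likewise asserted, not proved; it does not follow directly from Lemma \ref{equilpotlemma}, which identifies strand endpoints via supports of equilibrium potentials but gives no max-flow/min-cut statement.

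There is also a scope problem that would persist even if you filled the gap. You assume at the outset that $L\in\Omega$, i.e.\ that $L$ is already a response matrix with all noninterlaced minors nonnegative, and only upgrade weak inequalities to strict ones. The paper's proof is purely determinantal: starting from an elementary $3\times3$ identity plus Jacobi's minor-inverse identity (the ``jaw move'') and Dodgson condensation (the ``condensation move''), it expresses \emph{every} noninterlaced minor as a subtraction-free rational function of the central minors, with denominators that are themselves minors already handled in the induction. These identities hold for arbitrary matrices, so the implication ``central minors positive $\Rightarrow$ all noninterlaced minors positive'' requires no realizing network at all --- which is exactly what is needed for the surrounding application, namely that $\binom{n}{2}$ inequalities define $\Omega^+$ inside the ambient space of symmetric matrices with kernel the constants. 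Your version, conditioned on $L$ already lying in $\Omega$, would be too weak for that use (one would still need the exponentially many weak inequalities to certify membership in $\Omega$ first), so even a completed topological argument along your lines proves a strictly weaker statement than the one the paper proves and uses.
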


The following proof is lifted directly from \cite{KW4}.
\begin{proof} This follows using two identities for minors.

Let $U$ be a matrix, and let $a,b,c$ index some of its columns, and $z,d$ index some of its rows.  Then it is elementary that
$$
0=
\begin{vmatrix}
U_{z,a}&U_{z,b}&U_{z,c}\\
U_{z,a}&U_{z,b}&U_{z,c}\\
U_{d,a}&U_{d,b}&U_{d,c}\\
\end{vmatrix}
=
U_{z,a}
\begin{vmatrix}
U_{z,b}&U_{z,c}\\
U_{d,b}&U_{d,c}\\
\end{vmatrix}
-U_{z,b}
\begin{vmatrix}
U_{z,a}&U_{z,c}\\
U_{d,a}&U_{d,c}\\
\end{vmatrix}
+U_{z,c}
\begin{vmatrix}
U_{z,a}&U_{z,b}\\
U_{d,a}&U_{d,b}\\
\end{vmatrix}.
$$
Suppose $U$ is invertible, and let $M$ denote its inverse.
Let $r_1,\dots,r_k$ denote the row indices of $M$ (and column indices of $U$),
and $c_1,\dots,c_k$ denote the column indices of $M$ (and row indices of $U$).
Dividing through by $(\det U)^2$ and using Jacobi's identity relating
minors of a matrix to minors of its inverse, we obtain
$$ 0 = \det(M_{i,j})^{i\neq a}_{j\neq z} \det(M_{i,j})^{i\neq b,c}_{j\neq z,d} - \det(M_{i,j})^{i\neq b}_{j\neq z} \det(M_{i,j})^{i\neq a,c}_{j\neq z,d} + \det(M_{i,j})^{i\neq c}_{j\neq z} \det(M_{i,j})^{i\neq a,b}_{j\neq z,d}. $$

Since the column $z$ is always excluded, we may as well suppose that $M$ is a $k\times (k-1)$ matrix.  Then
$$ \det(M_{i,j})^{i\neq b}_{j} \det(M_{i,j})^{i\neq a,c}_{j\neq d} = \det(M_{i,j})^{i\neq a}_{j} \det(M_{i,j})^{i\neq b,c}_{j\neq d}  + \det(M_{i,j})^{i\neq c}_{j} \det(M_{i,j})^{i\neq a,b}_{j\neq d}. $$

For example, taking $M=L^{9,8,7,6,5}_{1,2,3,4}$ and $a,b,c,d=9,8,5,4$, we have
$$
\Big|L^{9, 7, 6, 5}_{1, 2, 3, 4}\Big|  =
\frac{\Big|L^{8, 7, 6, 5}_{1, 2, 3, 4}\Big| \, \Big|L^{9, 7, 6}_{1, 2, 3}\Big|
  +
\Big|L^{9, 8, 7, 6}_{1, 2, 3, 4}\Big| \, \Big|L^{7, 6, 5}_{1, 2, 3}\Big|}{\Big|L^{8, 7, 6}_{1, 2, 3}\Big|}.
$$
We can denote this pictorially as
$$
\disk{9 7 6 5}{1 2 3 4}{9}  =
\frac{\disk{8 7 6 5}{1 2 3 4}{9} \disk{9 7 6}{1 2 3}{9}
  +
\disk{9 8 7 6}{1 2 3 4}{9}  \disk{7 6 5}{1 2 3}{9}}{\disk{8 7 6}{1 2 3}{9}}.
$$
We call transformations of this type the ``jaw move''.

For a given non-interleaved determinant interspersed with at least one isolated node, we can take $b$ to be one of the interspersed isolated nodes, and $a$ and $c$ to be the first and last of the nodes on the same side as $b$, and $d$ to be either first or last node on the other side as $b$.  With this choice of $a,b,c,d$, the jaw move expresses the original determinant as a positive rational function of ``simpler'' non-interleaved determinants, where a determinant is simpler if it has fewer strands, or else the same number of strands but fewer interspersed isolated nodes.

By repeated application of the jaw rule, any non-interleaved determinant can be expressed as a positive rational function of non-interleaved contiguous determinants.

The other identity that we need is based on Dodgson condensation (and is
also called the Desnanot-Jacobi identity).  It can be derived in a similar way as the jaw move, with the elementary starting identity
$$
U_{c,a} U_{d,b}
=
\begin{vmatrix}
U_{c,a}&U_{c,b}\\
U_{d,a}&U_{d,b}\\
\end{vmatrix}
 + U_{c,b} U_{d,a}.
$$
Dividing through by $(\det U)^2$ and using Jacobi's identity, we obtain
$$ \det(M_{i,j})^{i\neq a}_{j\neq c} \det(M_{i,j})^{i\neq b}_{j\neq d} =
 \det(M_{i,j})^{i}_{j} \det(M_{i,j})^{i\neq a,b}_{j\neq c,d} +
 \det(M_{i,j})^{i\neq b}_{j\neq c} \det(M_{i,j})^{i\neq a}_{j\neq d}. $$
For example, taking $M=L^{8,7,6}_{1,2,3}$ and $a,b,c,d=8,6,3,1$, we obtain
$$
\Big|L^{8,7}_{1, 2}\Big|  =
\frac{\Big|L^{8, 7, 6}_{1, 2, 3}\Big| \, \Big|L^{7}_{2}\Big|
  +
\Big|L^{7,6}_{1, 2}\Big| \, \Big|L^{8,7}_{2, 3}\Big|}{\Big|L^{7,6}_{2, 3}\Big|}.
$$
which we can denote pictorially as
$$
\disk{8 7}{1 2}{9}  =
\frac{\disk{8 7 6}{1 2 3}{9} \disk{7}{2}{9}
  +
\disk{7 6}{1 2}{9} \disk{8 7}{2 3}{9}}{\disk{7 6}{2 3}{9}}.
$$
We call transformations of this type the ``condensation move''.

If both sides of the determinant are contiguous, but the crossings are off-center, then we can pick $a$ and $d$ to be the paired nodes that are most off-center, and $b$ and $c$ to be the nodes next to the crossing but not in the crossing (and closer to the center).  Then after a condensation move, the original determinant is expressed as a positive rational function of contiguous crossings that are strictly more central.  By repeated application of such condensation moves, any contiguous determination may be expressed as a positive rational function of central contiguous crossings.

The remaining case to check is for $n$ even, with the (minimally) off-center contiguous crossings come in pairs, of which only one is included in the base cluster.  Using a condensation move we can express a minimally off-center contiguous crossing in terms of its opposite minimally off-center contiguous crossing and central contiguous crossings as shown below
$$
\disk{1 2}{7 6}{8} = \frac{\disk{1 2}{6 5}{8} \ \disk{2 3}{7 6}{8} + \disk{2}{6}{8} \ \disk{1 2 3}{7 6 5}{8}}{\disk{2 3}{6 5}{8}}
$$

\end{proof}

\subsection{The Jacobian} 
A remarkable property of the 
birational map from conductances to the (appropriate set of)
$L$ matrix minors for minimal networks is that
the Jacobian is $\pm1$:
\begin{thm}[\cite{KW4}]
$$\det\left(\frac{\partial\log M_i}{\partial \log c_j} \right)= \pm 1.$$
\end{thm}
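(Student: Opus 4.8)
The plan is to recast the entries of the Jacobian matrix probabilistically, then to reduce the whole identity to a single network by using the invariance of the response matrix under Y--Delta moves, and finally to verify a base case. For the first step, recall from Theorem~\ref{LABC} that each (non-interlaced) central minor is subtraction-free, $M_i = Z_{\sigma_i}/\Zunc$, where $\sigma_i$ is the planar pairing attached to that minor and $Z_{\sigma_i}$ is the corresponding grove partition sum. Since $Z_{\sigma_i}$ and $\Zunc$ are each sums of monomials in the conductances, logarithmic differentiation gives
$$\frac{\partial\log M_i}{\partial\log c_j}=\Pr_{\sigma_i}[e_j]-\Pr_{\mathrm{unc}}[e_j],$$
the difference between the probability that edge $e_j$ is used in a random grove of type $\sigma_i$ and the same probability under the uncrossing measure. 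This makes the matrix fully explicit; for the $Y$-network it equals $\det(E-\mathbf{1}\,p^{t})$, with $E$ the edge-usage matrix of the three pairings and $p$ the uncrossing occupation vector, which I compute to be $-1$.

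The main structural step is an invariance argument. A Y--Delta move induces a change of conductance variables $\Psi\colon(c_1,c_2,c_3)\mapsto(c_1c_2/S,\,c_1c_3/S,\,c_2c_3/S)$ with $S=c_1+c_2+c_3$; this is exactly the $Y$-network response map, whose logarithmic Jacobian is $\pm 1$ by the computation above (and the same holds for its inverse, the Delta--Y move). Because electrical transformations do not change the response matrix, each central minor is the same function of the two networks' data, i.e. $M_i^{\G}=M_i^{\G'}\circ\Psi$. By the chain rule $\det\!\big(\partial\log M_i^{\G}/\partial\log c_j\big)=\det\!\big(\partial\log M_i^{\G'}/\partial\log c_k\big)\cdot\det(D\Psi)$, and the extra factor is $\pm 1$. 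Thus the Jacobian determinant is invariant up to sign under Y--Delta moves. Since every well-connected minimal network on $n$ nodes is Y--Delta equivalent to the standard graph $\Gamma_n$, it suffices to establish the identity for $\Gamma_n$.

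It then remains to compute the base case on $\Gamma_n$, which I would do by induction on $n$ via the reconstruction peeling: one peels a boundary edge $e^{*}$ whose conductance is recovered as an explicit minor ratio, reducing $\Gamma_n$ to a minimal network with one fewer crossing and, after series/parallel cleanup, to $\Gamma_{n-1}$. The claim to prove is that, in a suitable ordering of edges and minors, the passage from the minors of $L$ to the pair ($c_{e^{*}}$ together with the minors of the reduced response matrix) is itself a log-Jacobian-$\pm 1$ change of coordinates, so that the determinant telescopes to $\pm 1$. The hard part is precisely this step: verifying that the extraction of $c_{e^{*}}$ and the reduction of $L$ interlock to produce a (block-)triangular Jacobian with $\pm 1$ pivots, so that the manifestly non-constant probabilistic entries above conspire to make the determinant the \emph{constant} $\pm 1$. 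The non-well-connected minimal case is handled identically, replacing the full family of central minors by the $|E|$ minors matching the realizable non-interlaced pairings and reducing to a standard representative of the given stub involution.
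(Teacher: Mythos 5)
Your two reduction steps are sound, and in fact sharper than what the paper records: the paper omits the proof entirely (it says the argument relies on techniques of Postnikov and Talaska and is too long to include), giving only the $n=3$ example. Your entrywise formula $\partial\log M_i/\partial\log c_j=\Pr_{\sigma_i}[e_j]-\Pr_{\mathrm{unc}}[e_j]$ is exactly the paper's probabilistic remark, and your Y--Delta invariance argument is correct and elegant: the conductance substitution $(c_1,c_2,c_3)\mapsto(c_1c_2/S,c_1c_3/S,c_2c_3/S)$ has the same functional form as the $n=3$ response map, so its log-Jacobian is the very $3\times 3$ determinant computed in the paper's example, hence $\pm1$; since Y--Delta moves preserve the edge count and leave $L$ (hence each $M_i$) invariant, the chain rule shows the Jacobian determinant is constant up to sign on each topological equivalence class, and Theorem~\ref{topequiv} then reduces the well-connected case to the single network $\Gamma_n$.

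The genuine gap is that you never prove the base case, and you say so yourself: the claim that peeling a boundary edge of $\Gamma_n$ makes the passage from the central minors of $L$ to the pair $\bigl(c_{e^*},\ \text{minors of the reduced }L\bigr)$ a block-triangular log-Jacobian-$\pm1$ change of coordinates is asserted, not verified, and it is precisely where all the content of the theorem lives. Note also that your invariance argument cannot help here: deletion/contraction and the subsequent series/parallel cleanup change $|E|$, so their conductance maps are not square and the chain-rule trick does not apply; one needs a genuinely new computation showing that the extracted minor ratio for $c_{e^*}$ interlocks triangularly with a correctly chosen minor family for the smaller network. Relatedly, your final sentence treats the non-well-connected case as ``identical,'' but there the appropriate coordinate family of minors depends on the stub involution (many central minors vanish identically), and transferring the $\pm1$ property down the partial order of Lemma~\ref{sequence} across a single deletion or contraction is again exactly the unproven peeling claim, not a corollary of Y--Delta invariance. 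As written, the proposal is a correct reduction of the theorem to an open verification, not a proof; closing it would require something like the flow/path combinatorics of Postnikov and Talaska that the paper cites, or a direct inductive computation of the peeling Jacobian on $\Gamma_n$.
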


In other words the volume form $\prod_{e} \frac{dc_e}{c_e}$
on the space of conductances 
is mapped to $\pm1$ times the volume form
$\prod_i\frac{dM_i}{M_i}$ on the space of $L$-matrix minors.
Note that the matrix entries have a probabilistic interpretation:
each $M_i$ can be written $M_i=\Pr(\pi)/\Pr(\text{unc})$ for some noninterlaced pairing
$\pi$. Thus 
$$\frac{\partial\log M_i}{\partial \log c_j}=\frac{c_j}{Z(\pi)}\frac{\partial Z(\pi)}{\partial c_j}-\frac{c_j}{\Zunc}\frac{\partial \Zunc}{\partial c_j}$$ is the difference in the probability that
edge $e_j$ is in a random $\pi$-pairing, minus the probability that $e_j$
is in a random uncrossing.

The proof of this theorem relies of techniques developed in \cite{Postnikov} and \cite{Talaska}
and is too long to give here. Let us just give an example, for a
well-connected network on $3$ nodes, the Y of Figure \ref{Ynetwork}. 
The central minors in terms of the conductances are
$L_{ij}=\frac{c_ic_j}{c_1+c_2+c_3}.$
The above Jacobian matrix (with columns in the order $L_{12},L_{13},L_{23}$) is 
$$\begin{pmatrix}1-\frac{1}{c_1+c_2+c_3}&1-\frac{1}{c_1+c_2+c_3}&-\frac{1}{c_1+c_2+c_3}\\
1-\frac{1}{c_1+c_2+c_3}&-\frac{1}{c_1+c_2+c_3}&1-\frac{1}{c_1+c_2+c_3}\\
-\frac{1}{c_1+c_2+c_3}&1-\frac{1}{c_1+c_2+c_3}&1-\frac{1}{c_1+c_2+c_3}\\
\end{pmatrix}$$
whose determinant is $1$.

\section{Networks on surfaces with nontrivial topology}

Some of the material in the preceding section on circular planar networks
has been extended to the case of annular networks and more generally networks
on surfaces. However the theory is not as complete at present as in the
case of circular planar graphs. Moreover beyond the annulus and torus the 
theory is for the most part nonexistent. 

We can nonetheless ask the same questions as arose in the planar case:
\begin{enumerate}
\item What is the natural notion of response matrix?
\item For which networks can we reconstruct
the conductances from the response matrix?
\item What is the structure of the space of response matrices?
\item What natural combinatorial objects are relevant?
\end{enumerate}

Our answers, briefly, are as follows. We elaborate on these in the following sections.
\begin{enumerate}
\item We use (the Schur reduction of) the bundle Laplacian for a flat
$\C^*$-connection or a flat $\SL$-connection on the network.
\item We can conjecturally reconstruct conductances for minimal networks
on the annulus. For other surfaces reconstruction is not in general possible,
unless we are given more data: we (conjecturally)
need to provide ``spectral" data as well.
\item We don't know what the structure of the space of response matrices is,
even for the annulus. However we have a description in the case when there
are no nodes, for the annulus and torus.
\item The natural combinatorial objects are ``cycle-rooted spanning forests": forests
in which each component has a unique (topologically nontrivial) cycle.
\end{enumerate}

\subsection{Vector bundle Laplacian}

In the case of a network on a surface it is natural
to consider not the standard Laplacian but a more general Laplacian operator, the vector bundle Laplacian,
which depends on a connection on the bundle (see definitions below).
This is because the standard Laplacian does not contain very much
information about the topology of the surface on which the network is 
embedded...in particular the reconstruction problem is not solvable in general
(as an example consider the network on an annulus with two vertices, both nodes, and two edges joining them
with different homotopy classes). 
From the bundle Laplacian we construct a response matrix which contains more
information than the one constructed from the standard Laplacian,
and conjecturally allows reconstruction for minimal networks.

Let us first define vector bundles and connections on a network.

\subsubsection{Vector bundles and connections}
Given a fixed vector space $W$, 
a {\bf $W$-bundle}, or simply a {\bf vector bundle} on a network $\G$ is the choice of a vector space $W_v$ isomorphic to $W$ for
every vertex $v$ of $\G$. A vector bundle can be identified with the vector space 
$W_{\G}:=\oplus_{v}W_v \cong W^{|\G|}$, called the {\bf total space} of the bundle.
A {\bf section} of a vector bundle is an element of $W_{\G}$.

A {\bf connection} $\Phi$ on a $W$-bundle
is the choice for each edge $e=vv'$ of $\G$ of an isomorphism $\phi_{vv'}$
between the corresponding vector spaces $\phi_{vv'}:W_v\to W_{v'}$,
with the property that $\phi_{vv'}=\phi_{v'v}^{-1}$. This isomorphism
is called the {\bf parallel transport} of vectors in $W_v$ to vectors in $W_{v'}$. Two connections $\Phi,\Phi'$ are said to be
{\bf gauge equivalent} if there is for each vertex an isomorphism
$\psi_v:W_v\to W_v$ such that the diagram
$$\begin{CD}W_v @>\phi_{vv'}>> W_{v'}\\
@VV\psi_vV @VV\psi_{v'}V\\
W_v@>\phi'_{vv'}>> W_{v'}
\end{CD}
$$
commutes. In other words $\Phi'$ is just a base change of $\Phi$.
Note that the connection has nothing to do with the conductances.

Given an oriented  cycle $\gamma$ in $\G$ starting at $v$, the {\bf monodromy} of the connection around $\gamma$ is the 
element of $\text{End}(W_v)$ which is the product of the parallel transports 
around $\gamma$.
Monodromies starting at different vertices on $\gamma$ are conjugate,
as are monodromies of gauge-equivalent connections.

A {\bf line bundle} is a $W$-bundle where $W\cong\C$,  
the $1$-dimensional complex vector space. In this case given a connection
if we choose a basis for each $\C$
then the parallel transport along an edge is just multiplication by an element of $\C^*=\C\setminus\{0\}$.
The monodromy of a cycle is in $\C^*$ and does not depend
on the starting vertex of the cycle (or gauge).

\subsubsection{Flat connections}

A connection on a network embedded on a surface $\Sigma$ is said to 
be a {\bf flat connection} if the monodromy around faces of $\G$ is trivial.
This implies that the monodromy is trivial around loops on $\G$ which are 
null-homotopic as loops on $\Sigma$.

It is not hard to see that flat connections, modulo gauge equivalence,
are in bijection with homomorphisms of $\pi_1(\Sigma)$ into
the group of automorphisms of $W$. 

\subsubsection{The Laplacian}

The Laplacian $\Delta$ on a $W$-bundle with connection
$\Phi$ is the linear operator 
$\Delta: W_{\G}\to W_{\G}$ defined by
$$\Delta f(v)=\sum_{v'\sim v}c_{vv'}(f(v)-\phi_{v'v}f(v'))$$
where the sum is over neighbors $v'$ of $v$. 

Note that if the vector bundle is {\bf trivial}, in the sense that $\phi_{vv'}$ is the identity
for all edges, this is our the notion of graph Laplacian from (\ref{Lapdef})
(or more
precisely, the direct sum of $\dim W$ copies of the Laplacian).

As in the case of the standard network Laplacian we often 
think of $\Delta$ as a matrix $\Delta=(\Delta_{vv'})_{v,v'\in V}$ whose entries
are $-c_{vv'}$ times the parallel transport from $v'$ to $v$. In particular the conductance is acting as a scalar multiplication in $W$. 

Here is an example.
Let $\G=K_3$ the triangle with vertices $\{v_1,v_2,v_3\}$. 
Let $\Phi$ be the line bundle connection
with $\phi_{v_iv_j}=z_{ij}\in\C^*.$
Then in the natural basis, $\Delta$ has matrix
\begin{equation}\label{3vexample}
\Delta=\left(\begin{matrix}c_{12}+c_{13}&-c_{12}z_{12}&-c_{13}z_{13}\\
-c_{12}z_{12}^{-1}&c_{12}+c_{23}&
-c_{23}z_{23}\\-c_{13}z_{13}^{-1}&-c_{23}z_{23}^{-1}&c_{13}+c_{23}\end{matrix}\right).
\end{equation}

\subsubsection{Edge bundle}

One can extend the definition of a vector bundle to the edges of 
$\G$. In this case there is a vector space $W_e\cong W$ for each edge $e$ as well as each vertex. 
One defines connection isomorphisms $\phi_{ve}=\phi_{ev}^{-1}$ for a vertex $v$ 
and edge $e$ containing that vertex, in such a way that if $e=vv'$ then $\phi_{vv'}=\phi_{ev'}\circ\phi_{ve}$, where $\phi_{vv'}$ is 
the connection on the vertex bundle.

The vertex/edge bundle can be identified with $W^{|\G|+|E|}=W_{\G}\oplus W_E$,
where $W_E$ is the direct sum of the edge vector spaces.

A {\bf $1$-form} (or cochain) is a function on oriented edges which is antisymmetric
under changing orientation. If we fix an orientation for each edge,
a $1$-form is a section of the edge bundle, that is, an element of $W^{|E|}$.
We denote by $\Lambda^1(\G,\Phi)$ the space of $1$-forms
and $\Lambda^0(\G,\Phi)$ the space of $0$-forms, that is, sections of the vertex bundle.

We define a map $d:\Lambda^0(\G,\Phi)\to\Lambda^1(\G,\Phi)$
by $df(e)=\phi_{ye}f(y)-\phi_{xe}f(x)$ where $e=xy$ is an oriented edge from vertex
$x$ to vertex $y$. 
We also define
an operator $d^*:\Lambda^1\to \Lambda^0$ as follows:
$$d^*\omega(v) = \sum_{e=v'v}\phi_{ev}\omega(e)$$ 
where the sum is
over edges containing $v$ and oriented towards $v$. Despite the notation,
this operator $d^*$ is not a standard
adjoint of $d$ unless $\phi_{ev}$ and $\phi_{ve}$ are adjoints
themselves, that is, if parallel transports are unitary operators (see below).

The Laplacian $\Delta$ on $\Lambda^0$ can then be defined as
the operator $\Delta=d^*{\mathcal C}d$ as before:  
\begin{eqnarray*}
d^*{\cal C}df(v)&=& \sum_{e=v'v}c_e\phi_{ev}df(e)\\
&=&\sum_{e=v'v} c_e\phi_{ev}(\phi_{ve}f(v)-
\phi_{v'e}f(v'))\\
&=&\sum_{v'}c_{vv'}(f(v)-\phi_{v'v}f(v'))\\
&=&\Delta f(v).
\end{eqnarray*}

We can see from the example (\ref{3vexample}) above on $K_3$
that $\Delta$ is not necessarily self-adjoint.
However if $\phi_{vv'}$ is unitary: $\phi_{vv'}^{-1}=\phi_{v'v}^*$
then $d^*$ will be the adjoint of $d$ 
for the standard Hermitian inner products
on $W^{|\G|}$ and $W^{|E|}$, and so in this case
$\Delta$ is a Hermitian, positive semidefinite
operator. 
In particular on a line bundle if $|\phi_{vv'}|=1$ for all 
edges $e=vv'$ then
$\Delta$ is Hermitian and positive semidefinite.

\subsection{Minimal networks}

Here we show that any
network on a surface can be ``minimized", that is, made into a minimal network. 

While reconstruction is not possible in general even for minimal
networks (see Theorem \ref{preimage} below), the case of the torus teaches us that 
the map from conductances to the reponse matrix has
potentially interesting preimage. 

A network $\G$ on a surface $\Sigma$ is {\bf minimal} if, 
when lift to the universal cover of $\Sigma$, the network is minimal,
that is, the lifts of the medial strands do not self-intersect and no two 
lifts intersect more than once.

As an example, see Figure \ref{landscape} below.

\begin{thm}\label{minimizable} Every network is topologically equivalent to a minimal
network. 
\end{thm}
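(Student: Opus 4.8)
The plan is to mimic the planar argument sketched above for Theorem \ref{topequiv}, but to run it in the universal cover $\widetilde{\Sigma}$ rather than in the disk, using the theory of systems of curves on surfaces to organize the reduction. First I would record precisely how the four electrical transformations act on the medial strands, now viewed as closed curves and arcs on $\Sigma$ and lifted $\pi_1(\Sigma)$-equivariantly to $\widetilde{\Sigma}$. The dead-branch/self-loop removal (type 1) erases a contractible monogon formed by a strand with itself (a Reidemeister I move); the series and parallel reductions (types 2 and 3) each erase an empty bigon bounded by two strand arcs (Reidemeister II moves, one a vertex-bigon and one the dual face-bigon); and the Y-Delta move (type 4) is exactly the Reidemeister III ``triangle'' move, which slides one strand across the transverse intersection of two others. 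Thus, as in the planar case, the moves of types 1--3 each strictly decrease the total number of medial crossings (equivalently, the number of edges of $\G$), while type 4 preserves it.

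Second, I would invoke the curve-reduction theory for surfaces. A system of curves is in \emph{minimal position} exactly when, in the universal cover, no lift has a self-intersection and no two distinct lifts meet more than once --- which is precisely the definition of a minimal network given just above. The key input is the classical fact (Hass--Scott; de Graaf--Schrijver) that any system of curves on a surface can be brought to minimal position by a homotopy that never increases the number of intersections, and moreover that such a homotopy is realizable by a finite sequence of Reidemeister moves of types I, II, III along which the crossing count is nonincreasing. I would apply this equivariantly to the lifted medial system: whenever the current configuration fails to be minimal there is, by this theory, an innermost monogon or bigon in $\widetilde{\Sigma}$, and homotoping it away lowers the crossing count.

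Third, I would translate the reduction back into electrical transformations on $\G$: each Reidemeister I (monogon removal) is a type-1 move, each Reidemeister II (bigon removal) is a series or parallel reduction (type 2 or 3), and each Reidemeister III is a Y-Delta move. Since types 1, 2, 3 strictly lower the edge count, the surface is compact (so there are only finitely many crossings to begin with), and the interspersed Reidemeister III moves merely reconfigure crossings without increasing their number, the process terminates. The terminal network has its lifted medial strands in minimal position, hence is minimal, and is joined to $\G$ by a finite chain of electrical transformations; this is exactly topological equivalence.

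The main obstacle is the surface version of ``bring a curve system to minimal position by crossing-nonincreasing Reidemeister moves,'' which is genuinely more delicate than the planar case: on a surface of positive genus one must rule out that lowering the crossing number forces one first to create new crossings, and one must treat lifts related by deck transformations carefully, so that a bigon in $\widetilde{\Sigma}$ between two translates of the same strand still descends to an honest local move on $\G$. I would also need to check the degenerate correspondences: a \emph{contractible} closed medial loop carrying no essential crossings must be disposed of (after pushing any interior crossings out by Reidemeister III moves it bounds a disk and corresponds to a self-loop or isolated dead component, removable by a type-1 move), while \emph{essential} closed strands must be left untouched; and one must confirm that every Reidemeister III in fact lands on a genuine degree-three internal vertex, so that a legal Y-Delta move is available. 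Pinning down this equivariant minimal-position statement in $\widetilde{\Sigma}$, and these boundary cases, is where the real work lies; the bookkeeping of crossings is then routine.
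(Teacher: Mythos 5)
Your reduction dictionary contains a genuine error that undermines the whole strategy. A series or parallel reduction is \emph{not} a Reidemeister II move. Since the crossings of the medial graph are in bijection with the edges of $\G$, and each of the moves of types 1--3 removes exactly \emph{one} edge, each such move removes exactly one crossing: the dead-branch/self-loop removal is indeed a monogon (R I) elimination ($1\to 0$ crossings), but series/parallel reduction takes the empty bigon around a degree-$2$ vertex (or degree-$2$ face) and \emph{merges its two crossings into a single crossing}, reconnecting the two strand arcs into new strands ($2\to 1$ crossings, not $2\to 0$). So the R II steps in a Hass--Scott/de Graaf--Schrijver monotone sequence have no electrical counterpart, and you cannot realize that sequence move-by-move. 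One can try to repair this (use R III moves to expose an empty monogon or bigon, apply the available electrical move, and induct on the crossing number, since it strictly drops), but the repaired process changes the homotopy classes of the strands along the way --- e.g.\ merging two parallel essential loops on an annulus produces a single strand winding twice, whose lifts to the universal cover cross a translate of themselves infinitely often --- so the process can terminate at a system that is minimally crossing in the de Graaf--Schrijver sense yet fails the paper's universal-cover definition of minimality. Your identification of ``minimal position'' with the paper's notion is exactly where this breaks.

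Relatedly, your plan never engages with the case that occupies most of the paper's actual proof: several closed strands lying in the \emph{same} homotopy class. The paper's argument runs a curve-shortening flow (crossing-reducing singularities and triple points, i.e.\ Y-Delta moves) to handle strands in distinct classes, but for a ``packet'' of same-class closed strands (Figure \ref{river}) it must ``comb'' them into parallel position, and this combing is an infinite periodic process in the universal cover: the paper legitimizes it not by finite Reidemeister combinatorics but analytically, by truncating the periodic strip, combing the finite planar piece, and proving via the response-matrix estimates of \cite{LP2} that the combed conductances converge to a periodic limit. Your closing paragraph correctly senses that deck-translate bigons are delicate, but the real missing idea is this limiting/combing step: no purely topological crossing-nonincreasing argument of the kind you invoke obviously produces it, precisely because the electrical moves cannot perform an honest R II and hence cannot make two crossing same-class loops disjoint without merging them.
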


\begin{proof} Any surface with a boundary (or closed surface of genus $\ge 2$)
has a hyperbolic metric (that is, a Riemannian metric of constant curvature $-1$)
since its universal cover is the Poincar\'e disk.

Using this metric we can isotope the strands, fixing the endpoints, using a
curve-shortening flow until all strands become hyperbolic geodesics.
Through this isotopy each singularity encountered either reduces the 
number of crossings or is a triple crossing, which corresponds
to a Y-Delta move of the network. In the end 
strands which touch the boundary will have the property
that, when lifted to $\tilde\Sigma$, they meet other strands at most once.
More generally, strands in different homotopy classes meet
each other at most once in the universal cover.

If two or more intersecting strands with no endpoints (that is, which are closed loops
on the surface)
have the same homotopy class
then we must be more careful since their geodesic
representatives will be identical. 
The situation in the universal cover will resemble that of 
Figure \ref{river}, left, that is, we have a packet of strands which
lie near the same geodesic, and other strands cross this packet
transversely and exactly once each.

It remains to prove that in this situation we can replace the self-intersecting packet
with a ``combed" version of itself, where all strands are parallel and non-intersecting
(they will still intersect the transversal strands). For this it suffices
to consider the situation on an annulus in which the packet strands wind around the annulus
forming closed loops, and a certain number of other strands cross this packet transversely.
On the universal cover of the annulus, which is a bi-infinite strip, the situation is as on the left in
Figure \ref{river}. 

We consider, then, a periodic graph on a strip $\R\times[0,1]$
with conductances invariant under $x\mapsto x+1$. 
Suppose we now take $n$ large and 
truncate the strip,
by removing everything far to the left of the origin, say left of $-n$, 
leaving free boundary conditions there, and removing everything to
the right of $+n$, and similarly leaving free boundary conditions there.
What remains is a planar graph on a rectangle $[-n,n]\times[0,1]$, and for this planar graph we can ``comb out"
the packet, as in Figure \ref{river}, from the left to the right, 
so that the strands of the packet
become parallel paths, nonintersecting except just left of the right endpoint,
where they may cross.

We claim that on the resulting combed finite planar graph, the conductances are nearly periodic,
that is, a periodic function plus an error tending to zero as the distance to $\pm n$ increases. 
To see this we use the interpretation of the response matrix entries as
ratios of weighted sums of spanning forests. In \cite{LP2} it is proved that for fixed $i,j$ the truncation
only changes $L_{i,j}$ by amounts tending to zero as $n\to\infty$. It is also shown that
$L_{i,j}\to 0$ as the distance from $i$ to $j$ increases.

Take node $i$ on the top and $j$ on the bottom boundary of the strip.
The numerator of the entry $L_{i,j}$ is the weighted sum of spanning trees wired at the boundary
and with a component connecting $i$ and $j$ (see Theorem \ref{LABC}). We claim that for a typical tree the path from
$i$ to $j$ does not get near $\pm n$ with high probability, that is, this path does not get far from $i$ or $j$.
If the path extended far beyond $j$, then (letting $j+k$ denote the translate of $j$ by $k$)
for large $k$ $L_{i,j+k}$ would be of the same order as $L_{i,j}$, since 
a local change of the long path from $i$ to $j$ would result in a path from $i$ to $j+k$.
This contradicts the fact that $L_{i,j+k}$ tends to zero for large $k$. 

Thus $L_{i,j}$ is a function depending less and
less (as $n\to\infty$) on the conductances far from $i$ and $j$. This proves convergence
of the conductances as $n\to\infty$, and the translation invariance of $L_{i,j}$ implies
the translation invariance of the limiting conductances. 

\old{
On the universal cover we can ``comb out" the packet, starting near one
end of the geodesic and working our way towards the other end,
doing Y-Delta and series/parallel reductions,
with the net result being that we replace this packet of intersecting
strands with a packet of parallel nonintersecting strands. We need
to verify that this combing process stabilizes, in the sense that 
the conductances of the combed part of the packet 
converge to finite periodic values as we move along.
Thus if we start the combing farther and farther to the ``left" that is, near one endpoint of the 
geodesic, we get a sequence of sets of conductances, which converges 
as the starting point moves off to infinity. 
In the limit we can replace our original packet with the combed packet
with the limiting conductances, which is minimal. 

Suppose we have $n$ mutually crossing, bi-infinite strands. At a certain time we have
combed everything to the left of a certain point; the situation is as
in the right of Figure \ref{river}. The first ``uncombed" packet $S_0$
consists of $n$ strands, each pair of which crosses each other exactly once
(that is, we assume $S_0$ is a ``complete packet": the top-down 
order of the strands
is reversed from the left of $S_0$ to the right of $S_0$).
Then there is a sequence of transverse strands $T_0$, then comes the next 
packet $S_1$ of the uncombed strands (some of which cross each other), 
then comes $T_1$ which is a translate of $T_0$, then comes
$S_2$ which is a translate of $S_1$,
and so on.

One step of 
the combing consists of moving each transverse strand from $T_0$ from right to 
left across $S_0$,
then using $S_1$'s crossings to cancel the corresponding crossings in $S_0$:
each crossing in $S_1$ forms a bigon with exactly one crossing in $S_0$;
we can resolve the crossing in $S_0$ resulting in a new ``complete packet".

Let $X$ be the conductances of edges in the packet $S_0$,
$Y$ the conductances of the crossings in $T_0$ (which are the same as those
in $T_1,T_2,\dots$)
and $Z$ the conductances of the crossings in $S_1$ (which are the same as those
in $S_2,S_3,\dots$). After one combing,
let $X'$ be the new conductances on $S_0$. The tuple of conductances in $X'$ is a positive rational function of
those in $X,Y,$ and $Z$ (that is, a rational function with nonnegative coefficients). 
Moreover it is fractional linear as a function of each conductance $c$ in $X$,
that is, the new value of $c$ is $c'=\frac{Ac+B}{Cc+D}$ where $A,B,C,D$ are polynomials in the other 
conductances in $X,Y,Z$. 

One need only show that the map $X\mapsto X'$
has a fixed point. We can use the Brouwer fixed point theorem on $[0,\infty]^n$
to show the existence of a fixed point, once we show that the map
extends continuously to the boundary. If none of the conductances in $X$
tend to $0$ or $\infty$ then we have an interior fixed point.
If some conductances tend to $0$ and others to $\infty$ we can 
simply either delete or contract the relevant edges in $S_0$; one verifies
that the electrical transformations with (not zero and not infinite) conductances
in $Y$ and $Z$ are well-defined in this limit.
}
\end{proof}

\begin{figure}[htpb]
\center{\includegraphics[width=4cm]{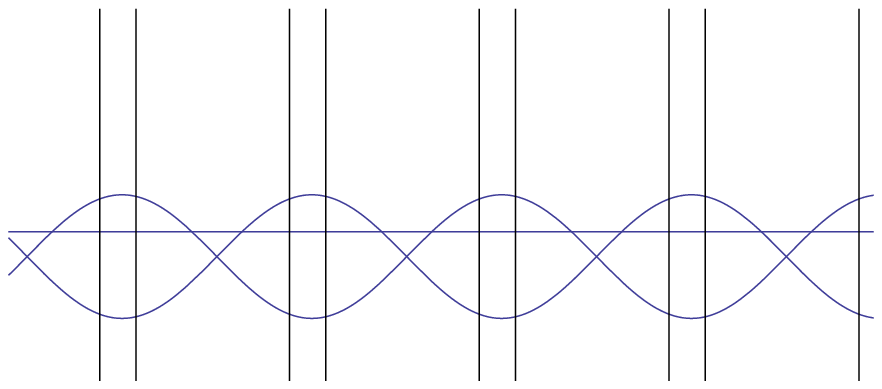}
\includegraphics[width=4cm]{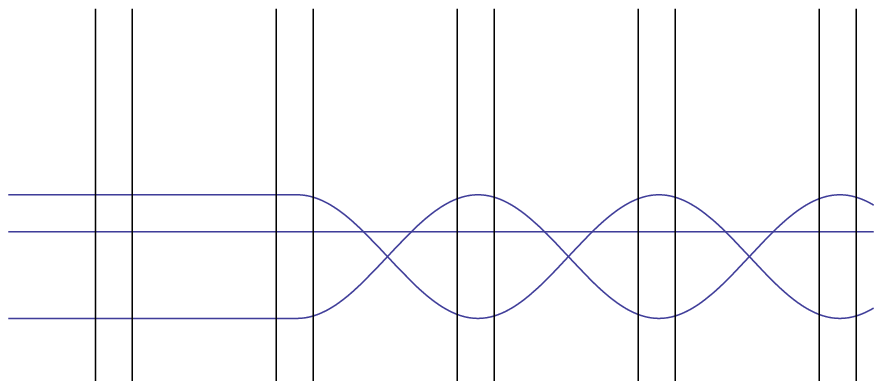}
\includegraphics[width=4cm]{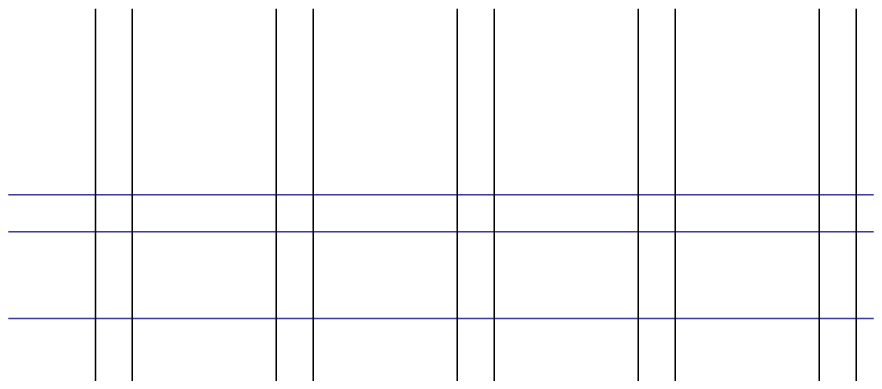}}
\caption{\label{river}Combing a river with three strands.}
\end{figure}

\subsection{Cycle-rooted spanning forests}
Where as the standard Laplacian determinant counts spanning trees, 
in the case of a one-
or two-dimensional vector bundle with connection the bundle
Laplacian determinant counts objects called 
cycle-rooted spanning forests. 
A {\bf cycle-rooted spanning forest} (CRSF) is a subset $S$ of the edges of $\G$,
with the 
property that each connected component
has exactly as many vertices as edges (and so has a unique cycle). See 
Figure \ref{CRSF}.
\begin{figure}[htbp]
\center{\includegraphics[width=3in]{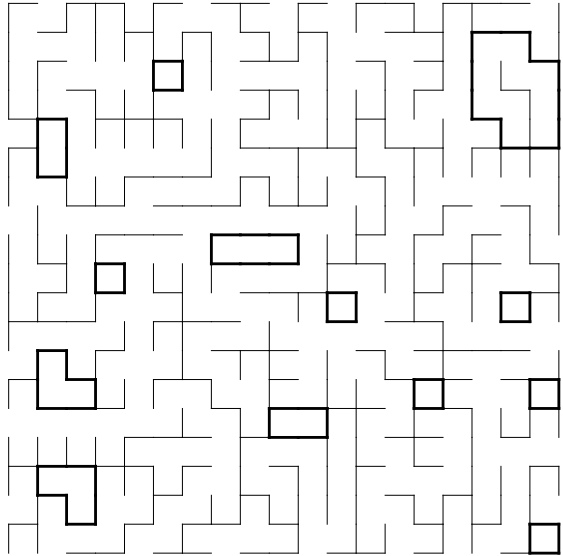}}
\caption{A CRSF on a square grid.\label{CRSF}}
\end{figure}

\begin{thm}[\cite{Forman}]\label{Formanthm}
For a line bundle on a finite graph,
$$\det\Delta=\sum_{\text{CRSFs} T}\prod_{e\in T}c_e\prod_{\text{cycles of }T}(2-w-1/w)$$ 
where the sum is over all CRSFs $T$, the first product is 
over the edges of $T$, and
the second product is over the cycles of $T$, where $w, 1/w$ are the 
monodromies of the two orientations of the cycle. 
\end{thm}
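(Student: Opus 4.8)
The plan is to mimic the Cauchy--Binet proof of the Matrix-Tree Theorem (Theorem \ref{matrixtree}), but now applied to the \emph{full} determinant of the bundle Laplacian: unlike the ordinary Laplacian, the bundle Laplacian with nontrivial connection is generically invertible, so there is no kernel to remove. Writing $\Delta = d^*{\cal C}d$ from the edge-bundle formalism, I would view $\det\Delta = \det\big((d^*{\cal C})\,d\big)$ as the determinant of a product of a $|V|\times|E|$ matrix and an $|E|\times|V|$ matrix and expand by Cauchy--Binet (\ref{CB}), obtaining
\[
\det\Delta=\sum_{B}\Big(\prod_{e\in B}c_e\Big)\,\det\big((d^*)^B\big)\,\det\big(d_B\big),
\]
where $B$ ranges over sets of $|V|$ edges, $d_B$ is the square submatrix of $d$ on rows $B$, and $(d^*)^B$ the submatrix of $d^*$ on columns $B$. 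The conductance weight $\prod_{e\in B}c_e$ factors out of the diagonal matrix ${\cal C}$ exactly as in the Matrix-Tree proof.

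First I would identify which subsets $B$ contribute. A function $f\in\ker d_B$ is precisely a flat section over the subgraph $(V,B)$, i.e.\ $f(y)=\phi_{xy}f(x)$ along every edge $e=xy\in B$. On any tree component such a section exists and is nonzero (root it and propagate), so $\det d_B=0$ whenever $(V,B)$ has a tree component. Since $|B|=|V|$, the identity $\sum_i(|E_i|-|V_i|)=0$ over components forces a tree component to be present unless every component is unicyclic; hence the only surviving $B$ are exactly the CRSFs. On a unicyclic component the flat section is nonzero iff the cycle monodromy $w$ equals $1$, and in that degenerate case $\det d_B=0$ as well --- consistent with the weight $2-w-1/w$ vanishing at $w=1$.

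The heart of the argument is the evaluation of $\det\big((d^*)^B\big)\det\big(d_B\big)$ for a CRSF. Both matrices are block-diagonal along the components, so the determinant factors over components and it suffices to treat a single unicyclic piece. There the edge-to-vertex matchings contributing to each determinant are exactly two (one per orientation of the unique cycle), the hanging trees being forced to match outward toward their leaves. After gauging the connection to the identity on the (simply connected) trees, a direct bidiagonal-circulant computation gives
\[
\det d_B=(-1)^{k}\,(1-1/w),\qquad \det (d^*)^B=(-1)^{k}\,(1-w),
\]
for a cycle of length $k$ with forward monodromy $w$, so that their product is $(1-w)(1-1/w)=2-w-1/w$. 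Each hanging-tree edge contributes a factor $\phi\cdot\phi^{-1}=1$ to the product of the two determinants, with the two orientation-signs cancelling, so the trees are invisible. Multiplying over components and summing over CRSFs yields the stated formula; the $K_3$ example (\ref{3vexample}), whose determinant is $c_{12}c_{13}c_{23}(2-w-1/w)$ with $w=z_{12}z_{23}z_{13}^{-1}$, is a useful check.

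The step I expect to be most delicate is the sign bookkeeping in the last paragraph: one must verify that the permutation signs arising from interleaving the forced tree-matchings with the two cyclic matchings are identical in $d_B$ and $(d^*)^B$ (so that they cancel in the product), and that the two determinants combine to the symmetric factor $2-w-1/w$ with the correct overall sign rather than its negative. The explicit cycle computation above shows the $(-1)^k$ factors cancel; carrying this through in the presence of attached trees is routine but is exactly where the care is needed.
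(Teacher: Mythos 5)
Your proposal is correct, and it is essentially the canonical proof of this result. Note that the paper itself offers no proof of Theorem \ref{Formanthm} --- it is quoted from Forman --- but your argument is precisely the natural extension of the paper's own Cauchy--Binet proof of the Matrix-Tree Theorem (Theorem \ref{matrixtree}), and it also parallels the Cauchy--Binet computation in the proof of Theorem \ref{LABC}. All the key steps check out: the identification $\ker d_B = \{\text{flat sections over }(V,B)\}$ correctly kills every $B$ containing a tree component, the counting argument $\sum_i(|E_i|-|V_i|)=0$ correctly forces the surviving $B$ to be exactly the CRSFs, and the observation that a unicyclic component with trivial monodromy also contributes zero is exactly consistent with the vanishing of $2-w-1/w$ at $w=1$. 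The block-diagonal factorization over components, the two-matching expansion on a unicyclic piece, and the phase cancellation $\phi_{ve}\phi_{ev}=1$ on tree edges (with matching sign patterns in $d_B$ and $(d^*)^B$, since the paper's conventions give $d_{e,v}$ and $d^*_{v,e}$ the same sign and inverse parallel transports) combine to give $(1-w)(1-1/w)=2-w-1/w$ with the correct overall sign; your $K_3$ verification, $\det\Delta=c_{12}c_{13}c_{23}(2-w-1/w)$ with $w=z_{12}z_{23}z_{13}^{-1}$, is accurate. Two harmless degenerate cases you could mention for completeness: if $\G$ is a tree ($|E|=|V|-1$) Cauchy--Binet has no terms and both sides vanish, and for a cycle of length $k=1$ (self-loop) or $k=2$ the ``two matchings'' degenerate slightly (for a self-loop the single matrix entry is itself a difference of the two end-phases), but the evaluation $2-w-1/w$ persists.
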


Notice that for a line bundle the monodromies depend only on the cycle
not on the starting point of the cycle.

\begin{thm}[\cite{Kenyon.bundle}]\label{CRSFthm}
For a $\C^2$-bundle on a finite graph with $\SL$ connection, we have
$$\Qdet\Delta=\sum_{\text{CRSFs} T}\prod_{e\in T}c_e\prod_{\text{cycles of }T}(2-\Tr(w))$$ 
where the sum is over all CRSFs $T$, the first product is 
over the edges of $T$, and
the second product is over the cycles of $T$, where $w$ is the 
monodromy of the cycle (starting from some vertex, and in some arbitrary orientation). 
\end{thm}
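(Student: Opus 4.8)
The plan is to reduce the statement to the square identity
$$\det\Delta=\Big(\sum_{\text{CRSFs }T}\prod_{e\in T}c_e\prod_{\text{cycles }\gamma}(2-\Tr(W_\gamma))\Big)^2,$$
after which $\Qdet\Delta$ is identified with the regular square root $S:=\sum_T\prod_e c_e\prod_\gamma(2-\Tr(W_\gamma))$ of $\det\Delta$ (its sign fixed by a reference connection), $\det\Delta$ being the ordinary $2|V|\times 2|V|$ determinant. The algebraic fact that makes the $\C^2$ case special is Cayley--Hamilton in $\SL$: every monodromy $W$ satisfies $W+W^{-1}=(\Tr W)I$, so $\det(I-W)=1-\Tr W+\det W=2-\Tr(W)$ and $\det(I-W)\det(I-W^{-1})=(2-\Tr W)^2$. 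This is the exact analogue of the factor $2-w-1/w$ in Theorem \ref{Formanthm}, with the eigenvalues $\lambda,\lambda^{-1}$ of $W$ playing the role of $w,1/w$.

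First I would treat the \emph{reducible} connections, i.e.\ those whose transports $\phi_{vv'}$ are simultaneously upper triangular after a gauge change. For such a connection the total space carries an invariant line sub-bundle, and in the basis ordering all first fiber-coordinates before all second ones the Laplacian becomes block upper triangular, $\Delta=\left(\begin{smallmatrix}\Delta_1&B\\0&\Delta_2\end{smallmatrix}\right)$, where $\Delta_1,\Delta_2$ are the \emph{line-bundle} Laplacians for the diagonal characters $\lambda_e$ and $\lambda_e^{-1}$. Hence $\det\Delta=\det\Delta_1\det\Delta_2$, and Theorem \ref{Formanthm} applied to each factor gives two CRSF sums. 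Since a cycle monodromy $W_\gamma$ is triangular with diagonal $\lambda_\gamma$, we have $\Tr W_\gamma=\lambda_\gamma+\lambda_\gamma^{-1}$, so each factor equals the single sum $S$ (the Forman weight $2-\lambda_\gamma-\lambda_\gamma^{-1}$ is symmetric under $\lambda\leftrightarrow\lambda^{-1}$). This already yields $\det\Delta=S^2$ on the reducible locus and, more importantly, pins down the correct edge- and cycle-weights.

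The general case cannot be deduced from this by continuity, since the reducible connections are \emph{not} Zariski dense once the first Betti number of $\G$ exceeds one. So for arbitrary $\SL$ connections I would expand $\det\Delta$ directly, following the proof of the Matrix--Tree Theorem and of Theorem \ref{Formanthm}. Expanding over permutations of $V\times\{1,2\}$ and breaking each diagonal block $\big(\sum_{v'\sim v}c_{vv'}\big)I$ into a sum over neighbors $v'$ turns every term into a decorated functional graph; a Forman-type sign-reversing involution (toggling a reducible tree edge, or reversing a contractible cycle) cancels all terms whose underlying edge set is not a CRSF. In a surviving term every edge is used on both fiber-coordinates, so tree edges contribute $c_e^2$ and each cycle $\gamma$ contributes the factor $\det(I-W_\gamma)\det(I-W_\gamma^{-1})=(2-\Tr W_\gamma)^2$ by the Cayley--Hamilton computation above. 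Collecting components then gives $\det\Delta=\sum_{T,T'}(\cdots)=S^2$, and taking the square root proves the theorem.

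The hard part will be the combinatorial cancellation in the last paragraph: identifying the correct sign-reversing involution on the two-colored functional graphs and, in particular, checking that the surviving CRSF contributions recombine into the \emph{perfect square} $S^2$ rather than the term-by-term sum $\sum_T(\cdots)^2$ --- equivalently, that the two fiber colors decouple into two independent copies of Forman's line-bundle sum along each configuration. The reducible-locus computation is the reliable guide to the weights, but it is the formal (coordinate-ring) bookkeeping of signs and the factorization over connected components that must be carried out to cover the irreducible connections that dominate the representation variety.
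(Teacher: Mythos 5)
Your overall architecture --- prove $\det\Delta'=S^2$ for the $2|V|\times 2|V|$ determinant, then extract $\Qdet\Delta=\pm S$ and fix the sign --- is coherent in outline: the Cayley--Hamilton computation $\det(I-W)=2-\Tr(W)$ is right, the block-triangular check on the reducible locus is right, and you correctly flag that reducible connections are not Zariski dense, so no continuity/density shortcut is available (the sign constancy you need does follow cheaply, since $\SL^E\times\R_{>0}^E$ is irreducible and both $\Qdet\Delta$ and $S$ are regular functions, so $(\Qdet\Delta-S)(\Qdet\Delta+S)=0$ forces a global sign, fixable on $\SU$-connections where both sides are nonnegative). The genuine gap is in the combinatorial core, and it is not merely ``hard'' as you say --- it is mis-stated. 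You claim that in every surviving term each edge is used on both fiber coordinates; that would yield $\sum_T\prod_e c_e^2\prod_\gamma(2-\Tr W_\gamma)^2$, the term-by-term sum of squares, not $S^2$. The cross terms of $S^2$ are really present and must come from configurations in which two \emph{different} CRSFs $T\neq T'$ each occupy one fiber coordinate, contradicting the doubled-edge claim. Concrete test: one vertex with two loops of transports $\phi_1,\phi_2$ and conductances $c_1,c_2$. By Cayley--Hamilton $\phi_i+\phi_i^{-1}=(\Tr\phi_i)I$, so $\Delta=\bigl(c_1(2-\Tr\phi_1)+c_2(2-\Tr\phi_2)\bigr)I$ and $\det\Delta$ contains the cross term $2c_1c_2(2-\Tr\phi_1)(2-\Tr\phi_2)$, which no doubled-edge configuration produces. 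So the involution you sketch (and the ``reversing a contractible cycle'' step --- contractibility has no meaning here, since the theorem is about abstract finite graphs, not graphs on surfaces) cannot be the right cancellation; you would need survivors indexed by \emph{pairs} $(T,T')$, and organizing that bookkeeping is the entire content of the missing proof. Your final paragraph raises exactly this worry, but the proposal as written resolves it the wrong way.

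For comparison: this survey does not prove the theorem but cites \cite{Kenyon.bundle}, and the argument there avoids the square entirely by computing $\Qdet\Delta$ \emph{directly} as the Pfaffian $\mathrm{Pf}(Z\Delta')$ via Dyson's theorem (stated in this paper). The special feature of $\SL$ is the identity $\phi^{-1}=Z^{-1}\phi^T Z$ for the $2\times 2$ block $Z=\left(\begin{smallmatrix}0&-1\\1&0\end{smallmatrix}\right)$, which gives $d^*=Z_V^{-1}d^T Z_E$ (block-diagonal $Z$'s on vertices and edges) and hence $Z\Delta'=d^T(Z_E\,{\cal C})\,d$ with $Z_E{\cal C}$ antisymmetric and block diagonal. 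A Pfaffian analogue of Cauchy--Binet expands $\mathrm{Pf}\bigl(d^T(Z_E{\cal C})d\bigr)$ over $2|V|$-element subsets of edge coordinates, and the block structure of $Z_E{\cal C}$ annihilates every subset that does not take \emph{both} coordinates of each chosen edge --- the doubled-edge property you wanted, obtained for free --- leaving a sum over $|V|$-edge subsets, i.e.\ CRSFs, with each cycle contributing $\det(I-W)=2-\Tr(W)$ \emph{linearly}. That route produces the cycle factor once rather than squared, requires no square root or sign-fixing, and is the natural quaternionic upgrade of the Cauchy--Binet proofs of Theorems \ref{matrixtree} and \ref{LABC} in this paper; if you want to rescue your program, prove that Pfaffian identity rather than $\det\Delta'=S^2$.
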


Notice that the trace of the monodromy is independent of starting point
(since conjugation does not change the trace) and orientation,
since for a matrix $M\in\SL$, we have $\Tr(M)=\Tr(M^{-1})$.

Here the function $\Qdet$ is the {\bf quaternion determinant} of the 
self-dual matrix $\Delta$; this requires some explanation.
A matrix $M$ with entries in $\GL$ is said to be {\bf self-dual} if
$M_{ij}=\tilde M_{ji}$, where 
$$\widetilde{\begin{pmatrix}a&b\\c&d\end{pmatrix}}=
\begin{pmatrix}d&-b\\-c&a\end{pmatrix}.$$
Since $\Delta_{ij}=-c_{ij}\phi_{ij}$ where $\phi_{ij}\in\SL$,
we have $\Delta_{ji}=-c_{ij}\phi^{-1}_{ij}=\tilde\Delta_{ij}$
and so $\Delta$ is self-dual.

We define
$$\Qdet(M) = \sum_{\sigma\in S_n} \text{sgn}(\sigma)
\prod_{\text{cycles}} \frac12\text{tr}(w)$$
where the sum is over the symmetric group,
each permutation $\sigma$ is written as a product of disjoint cycles,
and $\text{tr}(w)$ is the trace of the product of the matrix entries
in that cycle. 
If we group together terms above with the same 
cycles---up to the order of traversal of each cycle---then the contribution
from each of these terms
is identical: reversing the orientation of a cycle does not change
its trace. 
So we can write
$$\Qdet(M)=\sum_{\text{cycle decomps}} (-1)^{c+n}\prod_{i=1}^c \widehat{\text{tr}}(w_i),$$
where the sum is over cycle decompositions of the indices (not taking
into account the orientation of the cycles), $c$ is the number of cycles,
and $w_i$ is the monodromy (in one direction or the other)
of each cycle. Here $\widehat{\text{tr}}$ is equal to the trace for 
cycles of length at least $3$; cycles of length $1$ or $2$
are their own reversals so we define
$\widehat{\text{tr}}(w)= \frac12\text{tr}(w)$
for these cycles.

As an example, let $A=aI, C=cI$ and $B=\left(\begin{matrix}
b_1&b_2\\b_3&b_4\end{matrix}\right)$. Then
$$\text{Qdet}\left(\begin{matrix}A&B\\\tilde B&C\end{matrix}\right)=
\widehat{\text{tr}}(A)\widehat{\text{tr}}(C)-\widehat{\text{tr}}(B\tilde B) = 
ac-(b_1b_4-b_2b_3).$$

Note that if $M$ is a self-dual $n\times n$ matrix
then $ZM$, considered as a $2n\times 2n$ matrix is antisymmetric,
where $Z$ is the matrix with diagonal blocks 
$\left(\begin{matrix}0&-1\\1&0\end{matrix}\right)$ and zeros elsewhere.
The following theorem allows us to compute 
$Q$-determinants explicitly.

\begin{theorem}[\cite{Dyson}]
Let $M$ be an $n\times n$ self-dual matrix with entries in $GL_2(\C)$
and $M'$ the associated $2n\times 2n$ matrix, obtained by replacing
each entry with the $2\times2$ block of its entries.
Then $\Qdet(M)=\mathrm{Pf}(Z M')$, the Pfaffian of
the antisymmetric matrix $ZM'$.
\end{theorem}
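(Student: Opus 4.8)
The plan is to expand both sides as polynomials of degree $n$ in the scalar entries $(M_{ij})_{\alpha\beta}$ and match them term by term, using the ``port'' picture in which each vertex $i\in\{1,\dots,n\}$ carries two indices $(i,1),(i,2)$. With this indexing one has $(ZM')_{(i,\alpha),(j,\beta)}=(JM_{ij})_{\alpha\beta}$ with $J=\begin{pmatrix}0&-1\\1&0\end{pmatrix}$, and I would use the perfect-matching form of the Pfaffian, $\mathrm{Pf}(ZM')=\sum_{\mathcal M}\text{sgn}(\mathcal M)\prod_{\{p,q\}\in\mathcal M}(ZM')_{pq}$, the sum running over perfect matchings of the $2n$ ports. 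Contracting each port-pair $\{(i,1),(i,2)\}$ to the single vertex $i$ sends a matching $\mathcal M$ to a $2$-regular multigraph on $\{1,\dots,n\}$, i.e.\ to a disjoint union of cycles; this is exactly the unoriented cycle type appearing in the refined formula $\Qdet(M)=\sum(-1)^{c+n}\prod_i\widehat{\text{tr}}(w_i)$. So the first step is to group the matchings according to the cycle cover they induce, and to prove that, for each fixed cover, the matchings refining it contribute precisely the corresponding $\Qdet$-summand.

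The second step is the per-cycle computation, which is the clean part. Fix one cycle on vertices $i_1,\dots,i_\ell$ in a given cyclic order, and let $\alpha_k$ denote the port of $i_k$ used by the ``forward'' edge to $i_{k+1}$. Because $J$ is off-diagonal, the factor $(JM_{i_ki_{k+1}})_{\alpha_k,\beta}$ forces the surviving row index of $M$ to be the partner $\bar\alpha_k$ and produces a scalar $J_{\alpha_k\bar\alpha_k}=\pm1$, so the $\ell$ matching entries multiply to $\big(\prod_k J_{\alpha_k\bar\alpha_k}\big)\prod_k(M_{i_ki_{k+1}})_{\bar\alpha_k,\bar\alpha_{k+1}}$. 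As the assignment $(\alpha_1,\dots,\alpha_\ell)$ ranges over $\{1,2\}^\ell$ the second product ranges over exactly the terms of $\text{tr}(w)$ with $w=M_{i_1i_2}\cdots M_{i_\ell i_1}$, so summing reconstructs $\text{tr}(w)$. For $\ell\ge 3$ there are $2^\ell$ such matchings and one obtains $\text{tr}(w)=\widehat{\text{tr}}(w)$; for $\ell=1$ there is a unique matching and the constraint $M_{ii}=a_iI$ forced by self-duality gives $a_i=\tfrac12\text{tr}(M_{ii})$, while for $\ell=2$ only two matchings survive and, using $M_{ji}=\tilde M_{ij}$ together with $\text{tr}(A\tilde A)=2\det A$, they sum to $\det(M_{ij})=\tfrac12\text{tr}(M_{ij}M_{ji})$. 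Thus the short cycles automatically supply the factor $\tfrac12$ distinguishing $\widehat{\text{tr}}$ from $\text{tr}$.

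The hard part is the sign bookkeeping: one must show that $\text{sgn}(\mathcal M)\prod_k J_{\alpha_k\bar\alpha_k}$ is independent of the port-assignment, so that it factors out of the sum over assignments, and that its common value equals $(-1)^{c+n}$ (up to one global constant fixed by convention). The subtlety is that flipping a single $\alpha_k$ swaps the two ports of $i_k$, which is a $2$-opt move on $\mathcal M$; this flips the $J$-factor by $-1$, and one must check it flips $\text{sgn}(\mathcal M)$ by the same $-1$. Since the three pairings of four points carry Pfaffian signs $+,-,+$, the raw sign change of a $2$-opt depends on the relative order of the four ports involved, so the cancellation is not term-local; the point is precisely that the \emph{antisymmetry} of $J$ compensates this order-dependence. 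I would make this rigorous by fixing a reference matching, writing each $\text{sgn}(\mathcal M)$ as the sign of an explicit permutation of the $2n$ ports adapted to the cyclic order, and showing that conjugation by a port-swap changes that sign by exactly the $J$-factor; equivalently one phrases the whole Pfaffian as a sum over oriented loop configurations and invokes the standard cycle-lemma bookkeeping (Dyson's original computation), in which $\text{sgn}(\sigma)=(-1)^{n-c}$ emerges as the block-permutation sign once the antisymmetry of $J$ collapses all intra-block choices.

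Finally I would pin the global sign by evaluating at $M=I$, where $\Qdet(I)=1$ while $\mathrm{Pf}(Z)=\prod_i\mathrm{Pf}(J)$, fixing the constant once and for all. As an independent check, and an alternative route that sidesteps the per-matching signs, note that $\mathrm{Pf}(ZM')^2=\det(ZM')=\det Z\,\det M'=\det M'$ since $\det Z=1$, while $\Qdet(M)^2=\det M'$ for self-dual $M$; hence $\Qdet(M)=\pm\mathrm{Pf}(ZM')$, and the sign, being locally constant on the connected set of self-dual $M$ with $\det M'\ne0$, is determined by its single value at $M=I$.
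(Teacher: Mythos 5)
The paper offers no proof of this statement---it is quoted from Dyson's paper---so your attempt must be judged on its own terms; what you have written is essentially a reconstruction of Dyson's original matching expansion, and its skeleton is correct. The identification of port-matchings with unoriented cycle covers, the per-cycle computation $(ZM')_{(i_k,\alpha_k),(i_{k+1},\bar\alpha_{k+1})}=J_{\alpha_k\bar\alpha_k}(M_{i_ki_{k+1}})_{\bar\alpha_k,\bar\alpha_{k+1}}$, and the observation that the assignment-to-matching map is bijective on cycles of length $\ge 3$ but $2$-to-$1$ on $1$- and $2$-cycles (which is exactly where the factor $\tfrac12$ in $\widehat{\text{tr}}$ comes from) are all right. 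Moreover, the sign-invariance step you flag as the ``hard part'' is easier than you fear: flipping $\alpha_k$ replaces the matching $\mathcal M$ by its relabeling under the transposition $\tau$ of the two ports of $i_k$, and the Pfaffian matching-sign is equivariant, $\mathrm{sgn}(\tau\mathcal M)=\mathrm{sgn}(\tau)\,\mathrm{sgn}(\mathcal M)$ (keep the entries in cycle orientation rather than canonically sorted and this is term-local); so the sign flips by exactly $-1$, cancelling the $J$-factor flip, with no order-dependence to fight.

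The genuine gap is the determination of the common value of $\mathrm{sgn}(\mathcal M)\prod_k J_{\alpha_k\bar\alpha_k}$ for \emph{each} cycle decomposition, which must equal $(-1)^{c+n}$ (up to one overall convention): your evaluation at $M=I$ pins only the all-loops decomposition, and the relative signs across decompositions still require the canonical-matching computation you merely gesture at (a clean way to finish: verify the sign for one reference matching per decomposition, or induct by merging two cycles, which changes $c$ by one and is easy to track through the permutation). Your fallback route cannot substitute for this, because within this paper the identity $\Qdet(M)^2=\det M'$ is stated as a \emph{corollary} of the present theorem (``up to a sign, $\Qdet\Delta=(\det\Delta)^{1/2}$''); invoking it makes the argument circular unless you prove it independently (e.g.\ via quaternionic linear algebra), though the connectedness part of that argument---the self-dual matrices form a complex affine space whose hypersurface complement $\{\det M'\ne0\}$ is connected, so a continuous $\pm1$-valued ratio is constant---is sound. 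One last point in your favor: carrying out your $M=I$ pinning with the paper's stated $Z$ (diagonal blocks $\left(\begin{smallmatrix}0&-1\\1&0\end{smallmatrix}\right)$, so $\mathrm{Pf}(Z)=(-1)^n$ while $\Qdet(I)=1$; check $n=1$, $M=aI$, where $\mathrm{Pf}(ZM')=-a$ but $\Qdet(M)=a$) shows the identity as printed holds only up to $(-1)^n$---the clean statement needs the opposite sign of $Z$---so your insistence on fixing the global constant at $M=I$ is exactly the step that catches this convention slip.
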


Note that the matrix $\Delta'$ (obtained by replacing entries in $\Delta$ with
the $2\times2$ block of their coefficients) is just the 
matrix $\Delta$ acting on the total space of the bundle $W_{\G}$. 
So up to a sign we can write 
$$\Qdet\Delta=(\det\Delta)^{1/2}.$$

\subsubsection{Unitary connections and measures}

In the case of a line bundle, 
if $\prod_{\text{cycles}}2-w-1/w\ge 0$ for every CRSF,
we can define, following Theorem \ref{Formanthm},
a probability measure on CRSFs 
in which a CRSF has probability proportional to the product of 
its edge weights times the factor $\prod_{\text{cycles}}2-w-1/w$.

For example in the case that $|\phi_e|=1$ for every edge,
that is, $\phi$ is a {\bf unitary connection},
then $\Delta$ is a Hermitian, positive semi-definite matrix
(and positive definite for generic unitary connections).
In this case $|w|=1$ for all monodromies and so $2-w-1/w\ge 0$,
and equal to zero only for cycles with trivial monodromy.

Similarly in the case of a two-dimensional bundle if $\phi_e\in\SU$
for all edges then $\Delta$ is unitary and $2-\Tr w\ge 0$ for all monodromies.

In both cases we call $\mu=\mu_{\Phi}$ the corresponding probability measure.

Examples of natural settings of these measures are when the graphs
are embedded on surfaces with geometric structures, such as
a Riemannian metric (in which case the Levi-Civita connection defines
parallel transport of vectors in the tangent bundle across an edge and provides
a $U(1)$-connection) or an $\SU$ structure on the surface and parallel transport restricted to the graph gives a $\SU$ connection.

In the case of a flat connection, the monodromy of a contractible cycle is trivial,
so that the probability measure is supported on CRSFs whose 
cycles are all topologically nontrivial. We call such CRSFs {\bf essential CRSFs}.

\subsection{Cycle-rooted groves}

For the bundle Laplacian, the natural objects replacing groves are
cycle-rooted groves. Given a graph $\G$ embedded on a surface
with nodes $\No$, a {\bf cycle-rooted grove} (CRG) is a collection of edges
with the property that each component is either
\begin{enumerate}
\item A tree containing one or more nodes
\item a cycle-rooted tree containing no nodes.
\end{enumerate}

Theorems \ref{Formanthm} and \ref{CRSFthm} have an extension to the 
case of a graph with boundary $B\subset V$; the results are that the
bundle Laplacian determinants (for the Laplacian with boundary) are weighted
sums of cycle rooted groves; the weights of components with cycles
are as before, and the weights of tree components are just the 
product of their edge weights (there is no ``monodromy" contribution for tree components).
See \cite{KW4}.

\section{Annular networks}

\subsection{Laplacian determinant}
Let $\G$ be a network with flat $\C^*$-connection on an annulus, and
with no nodes.
Let $z$ be the monodromy around a loop generating the homotopy group of the annulus.
Then any simple closed loop on $\G$ will have monodromy $1$ if contractible,
$z$ or $z^{-1}$ if noncontractible. 
 
As a consequence by Theorem \ref{Formanthm}
the Laplacian determinant is 
\be\label{lapdetann}
\det\Delta = \sum_{k=1}^\infty C_k(2-z-\frac1z)^k,\ee
where $C_k$ is the weighted sum of CRSFs having $k$ components.

Thus $P(z)=\det\Delta$ is a Laurent polynomial in $z$ from which one can
extract the information about the number of loops in a random sample of a CRSF
on $\G$. For example substituting $x=2-z-1/z$ we have that $P(x)/P(1)$ is the
probability generating function for the number of components.

Independently of the conductances, $P(z)$ contains
topological information about the graph $\G$, for example
the highest power of $z$ is the maximum 
number of disjoint loops which can be drawn
on $\G$, each winding around the annulus. This is because any such set of loops
can be completed to a grove by adding some edges. 

There is a simple characterization of the roots of $P$.
By (\ref{lapdetann}) the roots come in reciprocal pairs
one of which is a double root at $z=1$. 

\begin{thm}\label{distinct}
$P$ is reciprocal and the roots of $P$ are real, positive, and distinct except for a double root at $z=1$.
\end{thm}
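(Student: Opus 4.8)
The plan is to separate the statement into its elementary part---reciprocity and the behaviour at $z=1$---and its genuinely analytic part---reality, positivity and simplicity of the remaining roots---and to handle the latter by recognizing the roots of $P$ as Floquet multipliers of a nonnegative periodic operator.

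First I would change variables. By (\ref{lapdetann}) we may write $P(z)=R(x)$ with $x=2-z-\tfrac1z$ and $R(x)=\sum_{k\ge1}C_k x^k$, where every $C_k\ge 0$ and $C_1>0$ (the latter because $\G$ carries at least one noncontractible loop, hence at least one one-component essential CRSF). Since $x$ is invariant under $z\mapsto 1/z$, $P$ is automatically reciprocal; since $x$ has a double zero at $z=1$ while $R$ has a simple zero there ($C_1\ne 0$), $P$ has exactly a double root at $z=1$. The map $z\mapsto x$ is a $2$-to-$1$ branched cover of $(-\infty,0]$ by $\R_{>0}$, branched at $z=1$, and it sends the unit circle onto $[0,4]$; consequently it suffices to show that the remaining roots of $R(x)/x=\sum_{k\ge1}C_{k}x^{k-1}$ are real, strictly negative and distinct. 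Because all the coefficients are positive, $R(x)>0$ for every $x>0$, so there are no positive real roots; thus the whole content is to prove that these roots are real (whence automatically negative) and simple.

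For reality I would pass to the universal cover. Gauging the monodromy onto a single seam $\gamma$ crossing the annulus, one has $\Delta(z)=H-zB-z^{-1}B^{t}$ with $H=H^{t}$ real and $B$ the matrix of seam-crossing conductances, so that a nonzero kernel vector of $\Delta(z)$ is precisely a $z$-quasiperiodic harmonic section on the periodic strip $\tilde\G$ covering $\G$. Thus the roots of $P$ are exactly the Floquet multipliers, at energy $0$, of the periodic Laplacian on $\tilde\G$. Encoding a solution by its Cauchy data across one copy of $\gamma$ gives a monodromy (transfer) matrix $T$ whose eigenvalues are these multipliers; self-adjointness of $\Delta$ makes $T$ symplectic, which re-proves the reciprocal pairing $z\leftrightarrow 1/z$. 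The key point is that $\Delta$ is positive semidefinite with the constant section spanning the bottom of its spectrum, so energy $0$ lies at the bottom of the spectrum of the strip operator, hence at or below every spectral band; at such a spectral edge the Floquet multipliers leave the unit circle onto the positive real axis, which is exactly the assertion that the nonzero roots of $R$ are real and negative. The constant section furnishes the multiplier $z=1$, and symplectic pairing forces it to be double.

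Distinctness and positivity I would obtain from the oscillation properties forced by the positive conductances: after reducing the strip (by Schur complement across the seam) to a discrete Sturm--Liouville / Jacobi form, the transfer matrix is oscillatory in the sense of Gantmacher--Krein, so its eigenvalues are real, positive and simple; equivalently, the eigenvalue of $\Delta(z)$ that vanishes at a root varies strictly monotonically in $z$ along $\R_{>0}$ (a Hellmann--Feynman computation in which the relevant energy derivative is a strictly positive Dirichlet-type quantity), which forbids a repeated root away from $z=1$. I expect the main obstacle to be precisely this reality step: there is no global gauge rendering $\Delta(z)$ symmetric on the annulus (the level function one would use to symmetrize is multivalued), so one cannot simply invoke the spectral theorem, and for a seam meeting several edges one must set up the symplectic transfer matrix on the full Cauchy-data space and verify its oscillatory / total-positivity properties rather than reading them off a scalar Jacobi recurrence.
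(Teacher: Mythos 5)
Your elementary part is sound and agrees with the paper: using (\ref{lapdetann}) to write $P(z)=R(x)$ with $x=2-z-\frac1z$, nonnegative coefficients $C_k$, and $C_1>0$ does give reciprocity, the exact double root at $z=1$, and the reduction to showing that the roots of $R$ are real and simple (negativity, hence positivity and reality of $z$, then being automatic). The genuine gap is in your analytic core, and you half-acknowledge it yourself. The assertion that, because energy $0$ sits at the bottom of the spectrum of the periodic strip operator, ``the Floquet multipliers leave the unit circle onto the positive real axis,'' is not a theorem for quasi-one-dimensional operators. Writing $\Delta(z)=H-zB-z^{-1}B^{t}$, the symplectic symmetry of the transfer matrix together with reality of $P$ only forces the multipliers into quadruples $\{z,\bar z,z^{-1},\bar z^{-1}\}$; at an energy at or below the spectrum, only the pair belonging to the band that actually closes at that edge is forced through $z=1$ onto the real axis, while the remaining pairs (a width-$m$ strip has $m$ of them) can a priori form genuinely complex quadruples, exactly as they can inside a gap of a generic self-adjoint quasi-1D operator. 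Ruling out those quadruples \emph{is} the content of the theorem, so your reality step restates it rather than proving it. (Note also that $B$ is supported on the seam-crossing edges and is typically singular, so the Cauchy-data transfer matrix you invoke need not even exist without further reduction.) The same applies to simplicity: the Gantmacher--Krein oscillatory property of the monodromy matrix is asserted, not verified, and, as you concede, there is no scalar Jacobi recurrence from which to read it off.

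The paper sidesteps all of this with a topological reduction that your proposal never uses: electrical transformations rescale $\det\Delta$ by a positive constant independent of $z$ (every CRSF weight rescales the same way), hence preserve the roots of $P$; and by combing the medial strands as in the proof of Theorem \ref{minimizable}, any annular network is reduced to the ``string of loops'' of Figure \ref{loopline}. On that normal form the Laplacian is a \emph{linear} pencil $A+xD$ with $D=\mathrm{diag}(b_1,\dots,b_n)$, tridiagonal in the loop order; conjugating by $D^{-1/2}$ yields a symmetric Jacobi matrix with strictly nonzero off-diagonal entries, so reality is the spectral theorem, positivity of the roots follows from $C_k\ge 0$, and simplicity is the classical three-term-recurrence argument (an eigenvector whose first coordinate vanishes is identically zero). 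If you want to salvage your Floquet program, the honest route is to establish the total positivity of the relevant transfer operator directly, and that appears to be no easier than--and is morally equivalent to--the paper's reduction to the string of loops.
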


\begin{proof}
Our first goal is to minimize the graph so that it is a string of loops as illustrated in Figure \ref{loopline}, 
each loop winding around the annulus.

As in the proof of Theorem \ref{minimizable} above, we use the curve-shortening flow
on the medial strands until we have only a packet of strands winding around the core of the annulus
as in Figure \ref{river}, left (but without the vertical strands). 
In the resulting network, choose vertices $v_1,v_2$, one adjacent to each boundary
and temporarily consider these to be nodes. There is one strand wrapping outside $v_1$
and one wrapping outside $v_2$. Break these strands so they have two stubs near each of $v_1,v_2$.
Now continue to minimize the medial strands as in the proof of Theorem \ref{minimizable}. In the minimal
network, the stubs from $v_1$ must cross to the stubs of $v_2$ (rather than to each other),
since otherwise the network would be disconnected. We can push all intersections of the finite strands
(that is, those connecting stubs) to one side of the intersections of the stub/river strands, as described in
Section \ref{lss} and Figure \ref{landscape} below. 
The network and medial graph now resemble that in Figure \ref{house}.
The graph can now easily be converted into a string of loops using the sequence of 
operations illustrated in Figure \ref{housetowall}.

\begin{figure}[htbp]
\center{\includegraphics[width=10cm]{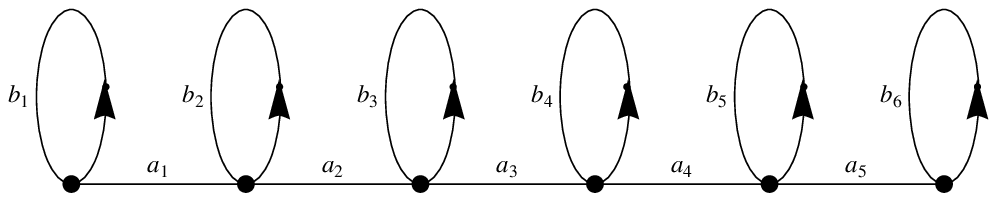}}
\caption{\label{loopline}Every graph on an annulus can be minimized to 
a string of loops.}
\end{figure}
\begin{figure}[htbp]
\center{\includegraphics[width=10cm]{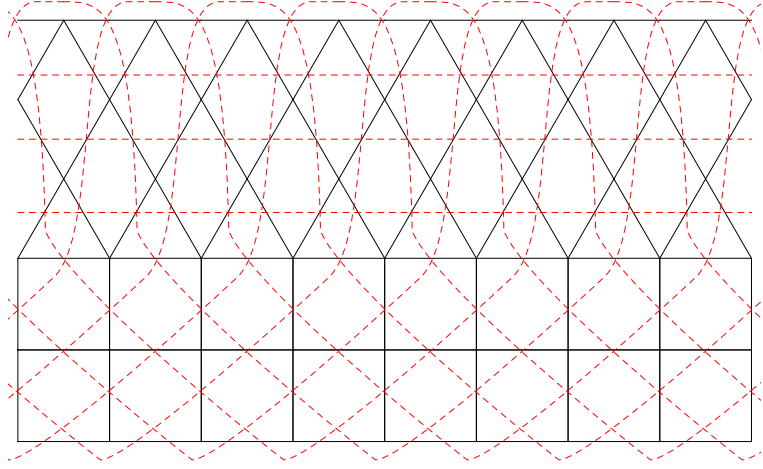}}
\caption{\label{house}Intermediate stage in minimization of network on an annulus. (Medial strands dashed/red).}
\end{figure}
\begin{figure}[htbp]
\center{\includegraphics[width=5cm]{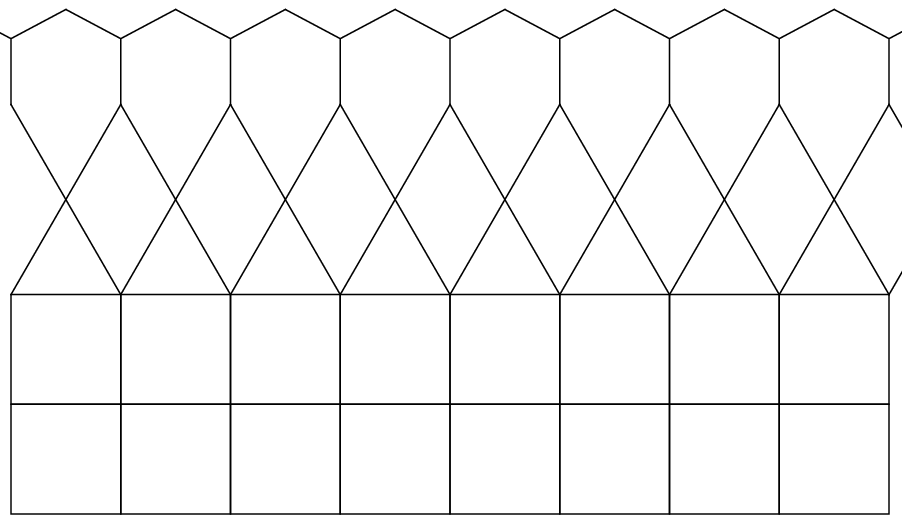}\hspace{0.5cm}\includegraphics[width=5cm]{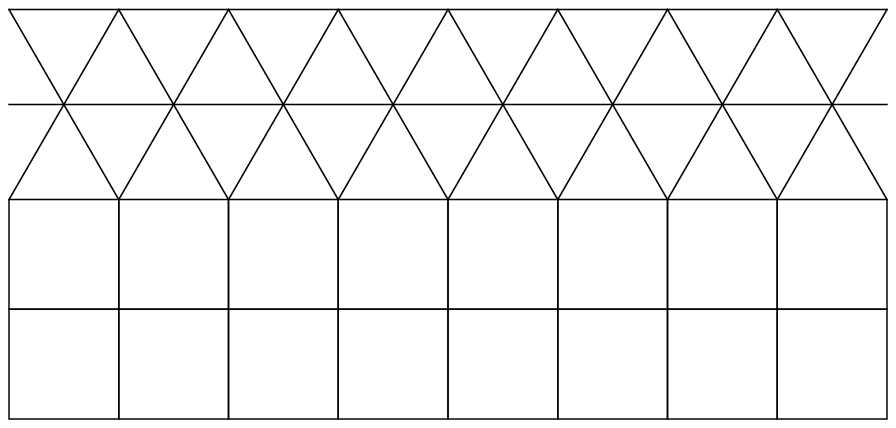}}
\center{
\includegraphics[width=5cm]{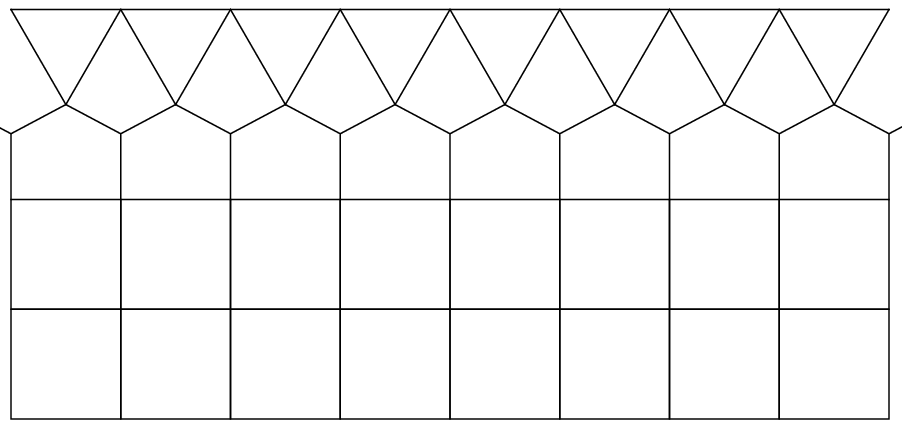}\hspace{0.5cm}\includegraphics[width=5cm]{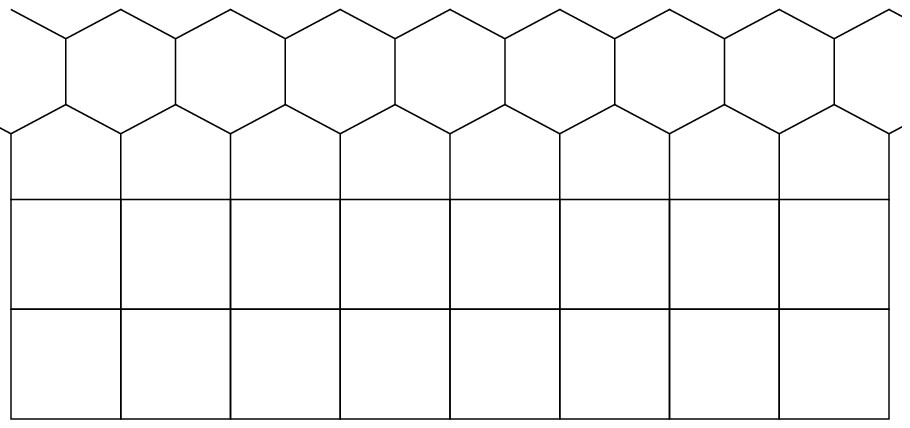}}
\center{\includegraphics[width=5cm]{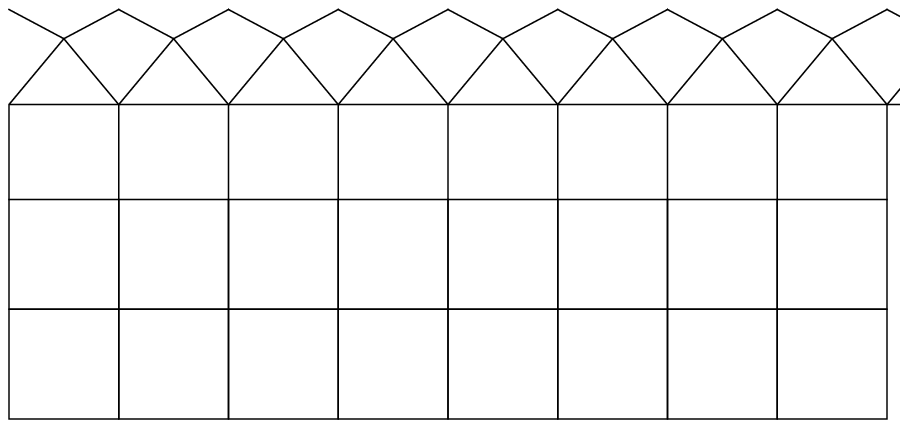}\hspace{0.5cm}\includegraphics[width=5cm]{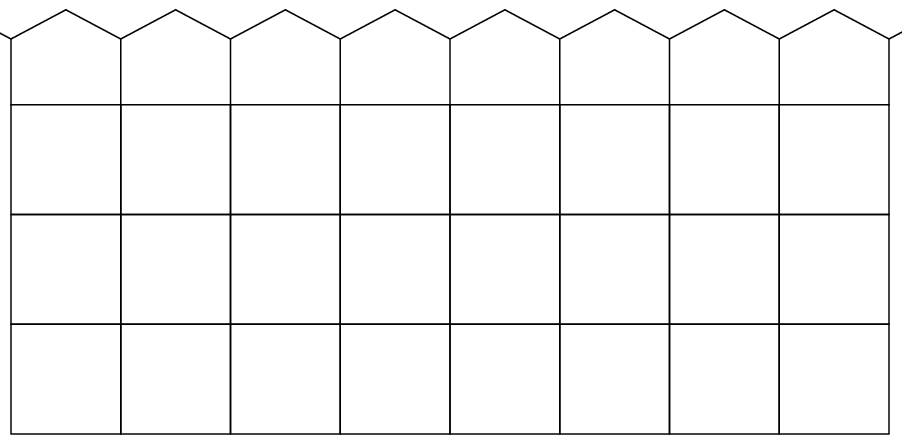}}
\center{\includegraphics[width=5cm]{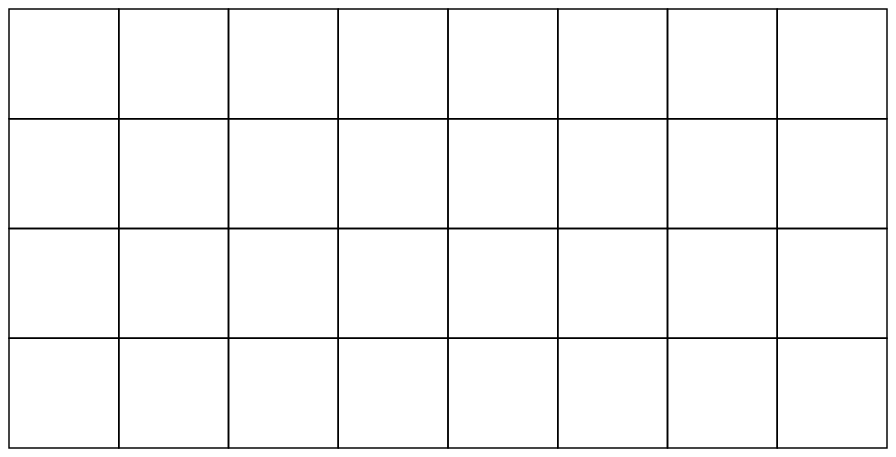}}
\caption{\label{housetowall}Passing from the intermediate stage to the string of loops: (1) From the network
in Figure \protect\ref{house}, use Delta-Y moves on the upper row of triangles. (1)-(2) Flatten the degree-$2$ vertices on the upper row and use Y-Deltas on the second row of vertices.
(2)-(3) Use Delta-Ys on the row of triangles just above the squares. (3)-(4) Use Delta-Ys on the upper row of triangles. (4)-(5) Y-Delta. (5)-(6) Flatten and then Delta-Y. (6)-(7) Flatten.}
\end{figure}
Let $a_1,\dots,a_{n-1}$ be the conductances on the edges of the string and
$b_1,\dots,b_n$ the conductances on the loops.
Then the Laplacian is a tridiagonal matrix (using $X=2-z-1/z$)
$$\Delta=\begin{pmatrix}a_1+b_1X&-a_1&0&\\
-a_1&a_1+a_2+b_2X&-a_2&0&\\
0&-a_2&\ddots\\
&&&a_{n-2}+a_{n-1}+b_{n-1}X&-a_{n-1}\\
&&&-a_{n-1}&a_{n-1}+b_nX.
\end{pmatrix}
$$
We pre- and post-multiply by the diagonal matrix $U$
whose diagonal entries are $b_i^{-1/2}$
to get
$$U\Delta U=\begin{pmatrix}\frac{a_1}{b_1}+X&\frac{-a_1}{\sqrt{b_1b_2}}&0&\\
\frac{-a_1}{\sqrt{b_1b_2}}&\frac{a_1}{b_2}+\frac{a_2}{b_2}+X&\frac{-a_2}{\sqrt{b_2b_3}}&0&\dots\\
0&&\ddots\\
&&&\frac{a_{n-2}}{b_{n-1}}+\frac{a_{n-1}}{b_{n-1}}+X&\frac{-a_{n-1}}{\sqrt{b_{n-1}b_n}}\\
&&&\frac{-a_{n-1}}{\sqrt{b_{n-1}b_n}}&\frac{a_{n-1}}{b_n}+X.
\end{pmatrix}
$$
Thus we see that for each root $z$ of $P$, $-2+z+\frac1z$ is an eigenvalue of 
$$M=\begin{pmatrix}\frac{a_1}{b_1}&\frac{-a_1}{\sqrt{b_1b_2}}&0&\\
\frac{-a_1}{\sqrt{b_1b_2}}&\frac{a_1}{b_2}+\frac{a_2}{b_2}&\frac{-a_2}{\sqrt{b_2b_3}}&0&\dots\\
0&&\ddots\\
&&&\frac{a_{n-2}}{b_{n-1}}+\frac{a_{n-1}}{b_{n-1}}&\frac{-a_{n-1}}{\sqrt{b_{n-1}b_n}}\\
&&&\frac{-a_{n-1}}{\sqrt{b_{n-1}b_n}}&\frac{a_{n-1}}{b_n}.
\end{pmatrix}.
$$
This matrix $M$ is symmetric and so has real eigenvalues, which are
positive by (\ref{lapdetann}). Symmetry also implies that $M$ has
orthogonal eigenvectors. Thus if $-X$ is a multiple
eigenvalue of $M$ 
it has an eigenspace $V_{-X}$ of dimension $\ge 2$. 
This means there is a nonzero eigenvector of $M$ with eigenvalue $-X$
whose first coordinate is $0$. But the coordinates of an eigenvector
$(x_1,x_2,\dots,x_n)$ satisfy a length-three linear recurrence:
$$\lambda x_1 = \frac{a_1}{b_1}x_1 - \frac{a_1}{\sqrt{b_1b_{2}}}x_{2}$$
and for $i>1$
$$\lambda x_i = -\frac{a_{i-1}}{\sqrt{b_{i-1}b_i}}x_{i-1}+(\frac{a_{i-1}}{b_i}+\frac{a_i}{b_i})x_i - \frac{a_{i}}{\sqrt{b_{i}b_{i+1}}}x_{i+1}.$$
Starting from $x_1=0$ this  implies (using that all $a_i\ne 0$) 
that $x_i=0$ for all $i$.
This is a contradiction. We conclude that the eigenvalues of $M$,
and therefore the roots of $P$,
are distinct.
\end{proof}

\old{\begin{cor}
For arbitrary strictly positive $c_1,\dots,c_n,d_1,\dots,d_{n-1}$ the matrix
$$\begin{pmatrix}c_1&-d_1&0&\dots\\0&c_2&-d_2&0\\
&0&\ddots&-d_{n-1}\\
&&0&c_n
\end{pmatrix}$$ (with zeros off the diagonal and superdiagonal)
has distinct singular values.
\end{cor}

\begin{proof} The matrix $M$ of (\ref{M}) can be written $M=U^tU$ where $U$ has the form
of the statement with $c_i=\sqrt{a_i/b_i}$ for $i=1,\dots,n-1$, $c_n=0$,
and $d_i=\sqrt{a_i}{b_{i+1}}$. If the $a_i,b_i$ are arbitrary then so are
the $c_i,d_i$. 
\end{proof}
}

\subsection{Cylinder example}
Let us compute, for a rectangular cylinder, $\det\Delta$ for the flat line bundle with
monodromy $z$. This will allow us to compute the corresponding
distribution of cycles in a uniform random essential CRSF.
Let $H_{m,n}$ be the square grid network on a cylinder, obtained
from the $n\times m$ square grid by adding edges from $(n,i)$ to $(0,i)$
for each $i\in[1,m]$.
$H_{m,n}$ is a ``product" of an $m$-vertex linear network $\G_m$
and a circular network $\Z_n$ of length $n$.
We put a flat line bundle structure on $H_{m,n}$ by putting 
parallel transport $z$ on the edges $(n,i)(0,i)$ and $1$ on all other edges.

The eigenvalues of the Laplacian $\Delta_{\G_m}$ on the linear network
are $2-2\cos\frac{k\pi}{m}$ for $k=1,2,\dots,m$
(the corresponding eigenvectors are $f_k(x) = \cos\frac{\pi k(x+1/2)}{m}$ for $x=0,1,2,\dots,m-1$).

The eigenvalues of the line-bundle Laplacian $\Delta_{\Z_n}$ 
on $\Z_n$ with monodromy $z$
are $2-\zeta-1/\zeta$ where $\zeta$ is an $n$th root of $z$.
(The corresponding eigenvectors are $g_k(x)=\zeta^{x}$.)
The eigenvectors on $H_{m,n}$ are the products $f_kg_\ell$
for $(k,\ell)\in[1,m]\times[0,n-1]$.

We then have
$$\det\Delta_{\Phi}=\prod_{\zeta^n=z}\prod_{k=1}^m4-2\cos\frac{k\pi}{m}-\zeta-\frac1{\zeta}.$$
\newcommand{\Ch}{\text{Ch}}
Use the identity $$\prod_{\zeta^n=z} R-\zeta-\frac1{\zeta}=\Ch_n(R)$$
where $\Ch_n$ is defined by
$\Ch_n(\alpha+\frac1{\alpha})=\alpha^n+\alpha^{-n}$ (a variant of the Chebychev polynomial). 

This leads to 
$$\det\Delta=(2-z-z^{-1})\prod_{k=1}^m(\Ch_n(4-2\cos\frac{k\pi}{m})-z-\frac1z)$$
$$=w\prod_{k=1}^m w+\Ch_n(4-2\cos\frac{k\pi}{m})-2$$
where $w=2-z-\frac1z$.
If we let $Q(w)$ be this polynomial then $Q(w)/Q(1)$ is the probability generating
function of the number of cycles in a uniform random essential CRSF.
 
It is interesting to consider what happens to this distribution for a 
large annulus, when $m,n\to\infty$ with $m/n$ converging to a
fixed quantity $\tau$.
For large $n$, $\Ch_n(4-2\cos\frac{k\pi}{m})$ is large unless $k$ is near $0$.
Thus only values of $k$ near $0$ affect the limiting distribution.
We have $4-2\cos\frac{k\pi}{m}=2+\frac{\pi^2k^2}{m^2}+O(\frac{k}m)^4 = \alpha_k+1/\alpha_k$ where
$\alpha_k=1+\frac{\pi k}{m}+O(\frac{k}m)^2$.
Thus in the limit $m,n\to\infty$ with $m/n\to\tau$ we have
$\Ch_n(4-2\cos\frac{k\pi}{m})=2\cosh\frac{\pi k}{\tau}+o(1)$.
The limit probability generating function for the number of cycles is then
$$Q(w)/Q(1)=w\prod_{k=1}^\infty\left(\frac{w+2\cosh\frac{\pi k}{\tau}-2}{2\cosh\frac{\pi k}{\tau}-1}\right).$$
See Figure \ref{annuluscycles}.
\begin{figure}[htbp]
\center{\includegraphics[width=10cm]{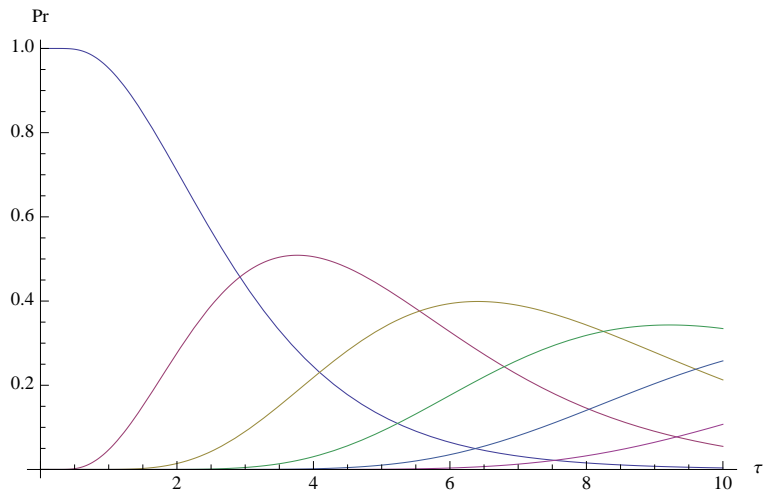}}
\caption{\label{annuluscycles}The probabilities of $1,2,\dots,6$ cycles
in a uniform random essential CRSF on a large annulus, as a function of the
modulus $\tau$.}
\end{figure}

\subsection{The response matrix}

Lam and Pylyavskyy \cite{LP2} studied the response matrices of networks on annuli.
Their point of view was to consider the lift $\tilde\G$ of the network $\G$
to the universal cover of the annulus. 
There one can define the response matrix as the limit of response matrices on larger
and larger portions of the graph; it is not hard to show that this limit exists.
Since the universal cover is planar one can recover some of the results from the 
planar case like nonnegativity of the non-interlaced minors (non-interlacedness
makes sense since the lift $\tilde\G$ is planar). 

We will take a different approach here which uses the bundle Laplacian
for a flat connection.
Let $\G$ be a network drawn on an annulus $\Sigma$, 
with nodes $\No$ which are a 
subset of the vertices adjacent to the two boundaries of $\Sigma$. 
Let $\Phi$ be a flat $\C^*$-connection with monodromy $z$ around a generator
of $\pi_1(\Sigma)$. 
Let $\Delta$ be the associated bundle Laplacian.
Let $L$ be the response matrix, defined as before as
$$L=-A+BC^{-1}B^*$$
where $\Delta=\begin{pmatrix}A&B\\B^*&C\end{pmatrix}.$
Now entries in $L$ are rational functions of $z$ (with coefficients which depend on the 
conductances).

What can be said about EIT on annular networks?

For circular planar networks it was very useful to have combinatorial interpretations (in terms of groves)
of the entries and minors of the response matrix.
We have similar interpretations in the present case.
The following theorem of \cite{KW4} holds for any network with line bundle
(not just networks on an annulus). Compare Theorem \ref{LABC} above.

\begin{theorem}[\cite{KW4}]\label{LABCbundle}
  Let $Q,R,S,T$ be a partition of $\No$ and $|R|=|S|$.
  Then $\det L_{R\cup T}^{S\cup T}$ is the ratio of two terms: the denominator
  is the weighted sum of CRSFs; the numerator is a signed weighted sum
  of cycle-rooted groves of $\G_T$, the graph $\G$ in which all nodes in $T$ are
  considered internal, the nodes in $Q$ are in singleton parts,
  and in which nodes in $R$ are paired with nodes
  in $S$, with the sign being the sign of the pairing permutation,
  times the parallel transports from $S$ to $R$:
$$
 \det L_{R,T}^{S,T} = \sum_{\text{\textrm{permutations} $\rho$}}(-1)^\rho\frac{\Zv\big[{}_{r_1}^{s_{\rho(1)}}|\cdots|_{r_k}^{s_{\rho(k)}}|q_1|\cdots|q_\ell\big]}{\Zv[1|2|\cdots|n]}
$$
\end{theorem}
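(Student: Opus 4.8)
The plan is to follow the proof of Theorem \ref{LABC} almost verbatim, replacing the Matrix-Tree Theorem (Theorem \ref{matrixtree}) by Forman's theorem (Theorem \ref{Formanthm}) at the combinatorial step and carrying along the extra parallel-transport factors supplied by the bundle incidence operator $d$. First I would reuse the Schur-complement factorization: writing $\Delta=\begin{pmatrix}A&B\\B^*&C\end{pmatrix}$ with $A$ indexed by $\No$ and $C=\Delta_\No$ indexed by the internal vertices $I$, and factoring $\Delta=\begin{pmatrix}-L&BC^{-1}\\0&I\end{pmatrix}\begin{pmatrix}I&0\\B^*&C\end{pmatrix}$, which is valid since $C$ is invertible for generic monodromy $z$. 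The Cauchy-Binet argument from Theorem \ref{LABC} then gives, with $I'=I\cup T$, the identity $\det\Delta_{R\cup I'}^{S\cup I'}=(-1)^{|R|+|T|}\det L_{R\cup T}^{S\cup T}\det C$. Because $\det C$ is the Forman sum over the internal graph with every node removed, it equals the stated denominator $\Zv[1|2|\cdots|n]$, so it only remains to identify $(-1)^{|R|+|T|}\det\Delta_{R\cup I'}^{S\cup I'}$ with the claimed numerator.

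Next I would expand $\det\Delta_{R\cup I'}^{S\cup I'}=\det\!\big(d^*_{S\cup I'}\,{\cal C}\,d_{R\cup I'}\big)$ by Cauchy-Binet into a sum over edge sets $Y$ of size $|R|+|I'|$, as in (\ref{multisum}), now using the bundle operators $d,d^*$. Since distinct components of $Y$ share no edges or vertices, the determinant still factors over connected components. An Euler-characteristic count shows there are exactly $|S|+|Q|$ tree components, and the nonvanishing of both one-sided incidence determinants forces every component to be one of exactly three types: a tree containing a single $q\in Q$, a tree joining a single $r\in R$ to a single $s\in S$, or a \emph{unicyclic} component containing no node. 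This last type is the genuinely new feature: in the trivial-bundle case the indicator of such a component lies in the kernel of $d_{R\cup I'}^Y$, but here the incidence block of a cycle with monodromy $w$ has determinant proportional to $(1-w)$, so unicyclic components survive precisely when their cycle is topologically nontrivial. These three component types are exactly the constituents of a cycle-rooted grove of $\G_T$.

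Finally I would collect weights and signs. For a tree component the two incidence determinants together produce the product of edge conductances times the net parallel transport along the unique path, which on an $r$--$s$ component is the transport from $S$ to $R$ demanded by the statement, while a $q$-tree appears identically on both sides and contributes $+1$. For a unicyclic component the factor $(1-w)$ coming from $\det d_{R\cup I'}^Y$ and the factor $(1-w^{-1})$ coming from $\det(d^Y_{S\cup I'})^*$ multiply to $(1-w)(1-w^{-1})=2-w-1/w$, reproducing Forman's cycle weight. This cycle-weight computation is the step I expect to demand the most care, since it requires checking that the two one-sided incidence determinants of a cycle really are $(1-w)$ and $(1-w^{-1})$ with matching transport prefactors, and that trees hanging off the cycle contract cleanly. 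The pairing permutation $\rho$ contributes its sign $(-1)^\rho$, whereas $Q$-trees and cyclic components occur symmetrically and contribute $+1$; combining these with the Schur-complement sign $(-1)^{|R|+|T|}$ yields the signed sum over $\rho$ in the claimed formula, with the overall sign tracked exactly as in Theorem \ref{LABC}. The main obstacle is thus purely the cycle-weight identity; everything else is a transport-decorated copy of the argument already given for Theorem \ref{LABC}.
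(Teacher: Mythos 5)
The paper itself contains no proof of Theorem \ref{LABCbundle}: it is quoted from \cite{KW4} (``in preparation''), so the only benchmark in the text is the proof of Theorem \ref{LABC}, and your proposal is exactly its natural bundle-decorated extension --- almost certainly the intended argument. The outline is correct, and you have rightly isolated the one genuinely new step. To confirm it: for a pure cycle $v_1,\dots,v_m$ with edges $e_i=v_iv_{i+1}$, the block of $d$ has determinant $(-1)^m\bigl(\prod_i\phi_{v_ie_i}\bigr)(1-w^{-1})$ and the block of $d^*$ has determinant $(-1)^m\bigl(\prod_i\phi_{e_iv_i}\bigr)(1-w)$; since $\phi_{ev}=\phi_{ve}^{-1}$ the transport prefactors cancel in pairs and the signs square away, leaving $(1-w)(1-w^{-1})=2-w-1/w$, Forman's weight. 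Pendant trees strip off leaf by leaf with matched entries $\phi_{ve}$, $\phi_{ev}$ whose product is $1$, and on an $r$--$s$ tree the off-path transports cancel the same way while the path entries compose to the net parallel transport between the two nodes, exactly as you claim. The component dichotomy (tree with exactly one vertex of $S\cup Q$ and one of $R\cup Q$, versus node-free unicyclic) follows from requiring both one-sided blocks to be square and nonsingular, which also rules out components with more than one independent cycle.

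Three corrections of detail. First, $\det C$ is \emph{not} the Forman sum of ``the internal graph with every node removed'': the diagonal of $C$ retains the conductances of edges to nodes, so $\det C$ is the determinant of the Dirichlet bundle Laplacian rooted at $\No$. It does equal $\Zv[1|2|\cdots|n]$, either by the boundary extension of Theorem \ref{Formanthm} mentioned in the paper, or self-containedly by running your own component analysis with $R=S=T=\emptyset$ and $Q=\No$. Second, a sign point you should not wave off: tracking the bookkeeping ``exactly as in Theorem \ref{LABC}'' produces the prefactor $(-1)^{|T|}$ of (\ref{generalminor}), and indeed specializing the bundle formula to the trivial connection (where every cycle weight $2-w-1/w$ vanishes, so $\Zv$ reduces to $Z$) must recover (\ref{generalminor}); the displayed formula in Theorem \ref{LABCbundle} lacks this factor, so your derivation will yield the statement \emph{with} $(-1)^{|T|}$, and the printed version should be read as a convention slip in the survey rather than something your argument needs to reproduce literally. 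Third, ``unicyclic components survive precisely when their cycle is topologically nontrivial'' should read ``precisely when the monodromy $w\neq1$'': the theorem is asserted for an arbitrary line bundle, and only for flat connections (with generic monodromy) do the two conditions coincide. None of these affects the soundness of the plan; with the cycle-block computation above made explicit, the proposal is a complete proof strategy.
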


Here the numerator
$\Zv\big[{}_{r_1}^{s_{\rho(1)}}|\cdots|_{r_k}^{s_{\rho(k)}}|q_1|\cdots|q_\ell\big]$
is the weighted sum of CRG's in which there are tree components connecting
$r_i$ to $s_{\rho(i)}$ for each $i$; the weight is the product of the edge
conductances, times the product over all cycles $\gamma$ of $2-w(\gamma)-1/w(\gamma)$ (where $w(\gamma)$ is the monodromy of the cycle $\gamma$)
and times the product of the
parallel transports from the $r_i$
to the $s_{\rho(i)}$ along the edges of the trees. We use the notation
$\Zv$ as opposed to $Z$ to remind us that we are dealing with
the bundle Laplacian.

On an annulus with flat connection, it is convenient to have the connection
supported on a ``zipper", that is, the set of faces crossing a shortest 
path in the dual graph
from one boundary component to the other, as in Figure \ref{zipper}.
In such a case the monodromy along a path from a node to another node is 
$1, z$ or $z^{-1}$ if the nodes are on the same boundary; it is a power of
$z$ if the nodes are on different boundaries, and it is $z^{\pm 1}$ for a 
topologically nontrivial cycle.
\begin{figure}[htbp]
\center{\includegraphics[width=6cm]{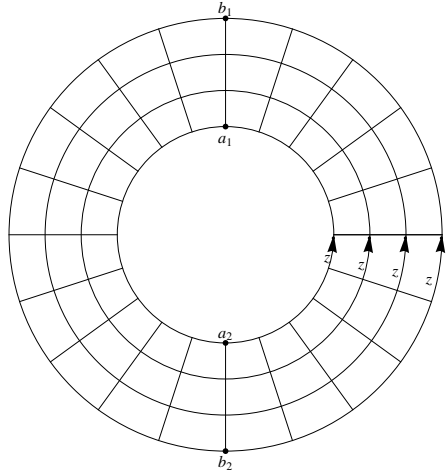}}
\caption{\label{zipper}The parallel transport on an annulus can be supported
on a zipper.}
\end{figure}

If particular if $i$ and $j$ are nodes on opposite boundaries then
$$L_{ij}\Zvunc=\sum_{k\in\Z} C_kz^k,$$
where $C_k$ is the weighted sum of CRGs with a component
tree connecting $i$ to $j$, no cycles (a cycle is precluded
by the existence of a path from $i$ to $j$), and where $k$ is the signed number
of times the path from $i$ to $j$ crosses the zipper. 

Thus $L_{ij}\Zvunc$ is a Laurent polynomial of $z$ with nonnegative coefficients.

Here is another example. Suppose $A=\{a_1,a_2\}$ are nodes on one boundary
and $B=\{b_1,b_2\}$ are nodes on the other. 
Then by the Theorem $L_A^B\Zvunc$ contains terms in which $a_1$ is paired with $b_1$
(and $a_2$ with $b_2$) and, with an opposite sign terms in which
$a_1$ is paired with $b_2$ and $a_2$ with $b_1$. However these two types of
terms differ by an power of $z$. Suppose for example that the zipper is as shown in Figure \ref{zipper}. Then any term ${}_{a_1}^{b_1}|{}_{a_2}^{b_2}$ has an even power of $z$ and ${}_{a_1}^{b_2}|{}_{a_2}^{b_1}$ has an odd power of $z$. Thus 
$L_A^B\Zvunc$ is a Laurent polynomial in $z$ with coefficients of alternating sign.

\subsection{Minimality and reconstruction}

\subsubsection{Minimal networks on the annulus: landscapes}\label{lss}

Let $\G$ be a minimal network on an annulus $\Sigma$ with $n_1$
nodes on one boundary and $n_2$ on the other.
As discussed, let $\tilde\G$ be the lift of $\G$ to the strip,
the universal cover of the annulus.
The medial strands come in four types:
\begin{enumerate}
\item they connect stubs on the lower boundary of the strip (the {\bf rocks});
\item they connect stubs on the upper boundary of the strip (the {\bf clouds});
\item they connect stubs across the boundaries (the {\bf trees});
\item they form bi-infinite paths along the strip (the {\bf river}).
\end{enumerate}

We can perform Y-Delta transformations so that the strands of the first, second
and fourth type do not intersect (that is, no strand of the first type intersects
a strand of the second or fourth type, although it may intersect
other strands of the first type, etc.) As long as there is at least one tree
strand, then we can ``untwist" the river strands so that they are all parallel
and noncrossing.

The picture is then like a landscape; the rocks at the bottom,
interspersed with trees which grow up to mingle with the clouds,
and the river runs through the trees but does not touch the clouds or rocks.
See Figure \ref{landscape} and \ref{landscape2}.

\begin{figure}[htpb]
\center{\includegraphics[width=5.5in]{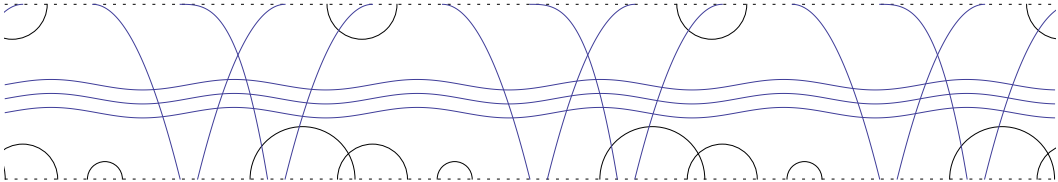}}
\caption{\label{landscape}A landscape: the general form of a minimal network on an annulus
(we show only the medial graph, and on the universal cover of the annulus).}
\end{figure} 

\begin{figure}[htpb]
\center{\includegraphics[width=3in]{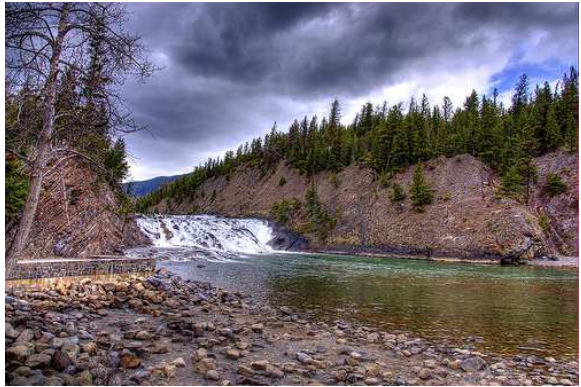}}
\caption{\label{landscape2}Bow falls, Banff.}
\end{figure} 

\subsubsection{Reconstruction}

The reconstruction of the conductances on a minimal annular network as a function
of the response matrix is open in general. Lam and Pylyavskyy \cite{LP2}
worked out the reconstruction for the ``grid" network of Figure
\ref{gridannulus}, whose medial graph is a grid.
\begin{figure}[htpb]
\center{\includegraphics[width=6cm]{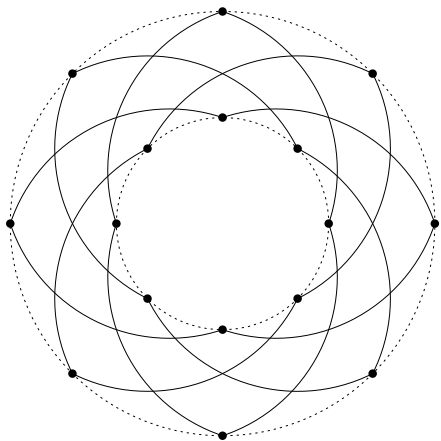}}
\caption{\label{gridannulus}The ``grid" network on an annulus.}
\end{figure}

The new feature of this reconstruction problem is that the solution is not unique.
If the medial graph of $\G$ has $k$ topologically nontrivial cycles, then there
are generically $k!$ solutions to the reconstruction. The idea is that
given a set of conductances on $\G$ one can permute the parallel
strands of the medial graph and get another solution. 
How does one permute two strands? This follows from 
the discussion of combing in Theorem
\ref{minimizable} above. 
See Figure \ref{river}. One introduces a pair of edges of opposite
conductance $c,-c$; this has no effect on the current flow or response matrix.
Now perform Y-Delta moves around the annulus to exchange the adjacent
strands. There is a unique choice of $c$ for which the added edge, when it comes
back around, will again have conductance $c$; it then can be removed along with the
parallel edge of conductance $-c$.

In \cite{LP2} it is conjectured that reconstruction is possible on all minimal annular networks.

\section{Periodic networks and networks on the torus}

Let $\G$ be a network on a torus. By this we mean a network $\G$,
with no boundary, embedded on a torus in such a way that every complementary
component is contractible.

For concreteness we suppose the torus is $\T^2=\R^2/\Z^2$
(but the network can be arbitrary).
The lifted network $\tilde\G$ on the universal cover $\R^2$ of $\T^2$
is a periodic planar network.

This case differs from the previous cases of circular planar and annular networks
in that the underlying surface has no boundary, so we do not introduce
any nodes.
Remarkably we can still have a complete theory about reducibility, minimal networks,
and the reconstruction problem. 

\subsection{The spectral curve of $\Delta$}

Let $\Phi$ be a flat line bundle on $\G$ with monodromy $z_1,z_2$
around the standard generators of the homotopy group of $\T^2$. 
Let $\Delta=\Delta_\Phi$ the associated Laplacian.
See for example Figure \ref{2X2torus}.
In this example the associated Laplacian
determinant $P(z_1,z_2)=\det\Delta$ is (with the vertices in the order indicated)
$$P(z_1,z_2)=\det\left(\begin{array}{cccc}6&-3-z_1^{-1}&-1-z_1^{-1}&0\\-3-z_1&6&0&-1-z_2^{-1}\\-1-z_2&0&4&-1-z_1^{-1}\\0&-1-z_2&-1-z_1&4\end{array}\right)=$$
\be\label{Pexample}
=
3 z_1^2+\frac{3}{z_1^2}-4 z_1 z_2-\frac{4z_1}{z_2}-\frac{4 z_2}{z_1}-\frac{4}{z_1z_2}-76z_1-\frac{76}{z_1}+z_2^2+\frac{1}{z_2^2}-52z_2-\frac{52}{z_2}+264.\ee

\begin{figure}
\center{\includegraphics[width=6cm]{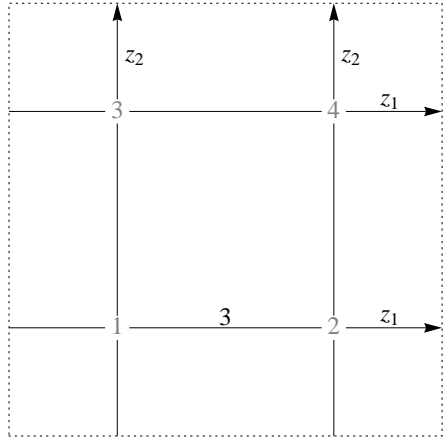}}
\caption{\label{2X2torus}The $2\times 2$ grid on a torus; in this example
all edges have conductance $1$ except for the one indicated
which has conductance $3$.} \end{figure}

In general $P$ is a (Laurent) polynomial with
coefficients which are polynomials in the conductances.
It is called the {\bf characteristic polynomial} of $\Delta$.
The curve $\{(z_1,z_2)\in\C^2~|~P(z_1,z_2)=0\}$ is the {\bf spectral curve} of $\Delta$.
Note that 
$P$ is symmetric: $P(z_1,z_2)=P(z_1^{-1},z_2^{-1})$.
This is because we can write $\Delta=d^*{\cal C}d$ where $d^*$ is the Hermitian
transpose of $d$ when $|z_1|=|z_2|=1$.

The {\bf Newton polygon} of the network $\G$ on the torus
is the Newton polygon of $P$,
that is, the convex hull of 
$$\{(i,j)\in\Z^2~|~z_1^{i_1}z_2^{j_2} \text{ is a coefficient of $P$}\}.$$
It is a centrally symmetric polygon centered at $(0,0)$.

By Theorem \ref{Formanthm} we can write
\begin{eqnarray}
P(z_1,z_2)&=&\sum_{\text{CRSFs}}\left(\prod_ec_e\right)(2-w-1/w)^k,\nonumber\\
&=&\sum_{(r,s)\ne(0,0)}C_{r,s}(2-z_1^iz_2^j-z_1^{-i}z_2^{-j})^k,\label{crsfform}
\end{eqnarray}
where in the first line we used the fact that in a CRSF on a torus all cycles
have the same homology class, and in the second line
we sum over all homology classes $(r,s)\in\Z^2$ 
(we need only
sum over one of each pair $(r,s),(-r,-s)$) where $(r,s)=(ik,jk)$ with $i,j$
being relatively prime. The coefficient $C_{r,s}$ is the weighted sum of CRSFs
with $k$ cycles of homology class $(i,j)$, that is (if we orient
the cycles in the same direction) of total homology class $(r,s)$.

This shows that the $z_1^rz_2^s$ coefficient of $P(z_1,z_2)$ 
is only due to the CRSFs with cycles in direction
$r/s$. Thus the Newton polygon
$N$ tells us which homology classes of CRSFs are possible
for $\G$; equivalently, the maximal number of vertex-disjoint cycles in 
$\G$ which have homology class $(i,j)$ is determined by the integer
point in $N$ which is farthest from the origin in direction $(i,j)$. 

In the above example (\ref{Pexample}), $P$ can be written in ``Newton polygon form"
$$P=\begin{matrix}&&1&&\\&-4&-52&-4&\\3&-76&264&-76&3\\
&-4&-52&-4&\\&&1&&
\end{matrix}.
$$
From this, one can write $P$ in a unique way in form (\ref{crsfform}):
\begin{multline} P(z_1,z_2)=3(2-z_1-\frac1{z_1})^2+64(2-z_1-\frac1{z_1})+(2-z_2-\frac1{z_2})^2+
48(2-z_2-\frac1{z_2})+\\
+4(2-z_1z_2-\frac{1}{z_1z_2})+4(2-\frac{z_1}{z_2}-\frac{z_2}{z_1}).
\end{multline}

Thus there are, for example, exactly $48$ CRSFs with a single cycle which has homology class $(0,\pm1)$.

\subsection{UST on the plane}

Let us say briefly a few words about the UST on the plane.

Let $\tilde\G$ be the periodic planar graph which is the lift
of $\G$ to $\R^2$. 
Pemantle \cite{Pemantle} showed that there is a UST measure 
on spanning trees of $\tilde\G$ which is the limit of the UST
measure on larger and larger tori 
$\tilde\G_n \stackrel{\text{def}}{=} 
\tilde\G/n\Z^2$. 

The {\bf Free Energy} of this measure is the limit
$$F=\lim_{n\to\infty}\frac1{n^2}\log Z_n,$$
where $Z_n$ is the partition sum for spanning trees on $\tilde\G_n$.

There is a nice formula for $F$
$$F=\frac1{(2\pi i)^2}\int_{|z_1|=1}\int_{|z_2|=1}\log P(z_1,z_2)\frac{dz_1}{z_1}\frac{dz_2}{z_2}$$
which arises from the exact formula
$$Z_n(z_1,z_2)=
\prod_{\zeta_1^n=z_1}\prod_{\zeta_2^n=z_2}P(\zeta_1,\zeta_2)$$
(obtained by Fourier analysis of $\tilde\G_n$) by taking logs and replacing the Riemann sum with an integral. 

One can analyze the limiting UST measure using the fact
that edge process is determinantal with kernel $T$
given by the {\bf Transfer Current} matrix:
$T(e,e')$ is the current across $e'$ when one unit of current
enters the graph at $e_+$ and exits at $e_-$.
In terms of the Greens function we can write
$$T(e,e') = G(e_+,e'_+)-G(e_+,e'_-)-G(e_-,e'_+)+G(e_-,e'_-).$$
The Green's function $G(v,v')$ on $\tilde\G$ has a formula involving 
the Fourier coefficients of $1/P(z_1,z_2)$: more precisely,
an integral over the unit torus $|z_1|=|z_2|=1$ of an expression
$z_1^xz_2^yQ(z_1,z_2)/P(z_1,z_2)$ where $(x,y)$ is the translation
from $v$ to $v'$ and $Q$ is a polynomial depending on where $v,v'$
sit in their respective fundamental domains. 
In particular the transfer current has an exact
asymptotic expression. 

\subsection{Medial strands and minimality}

Recall that two networks on $\T^2$ are {\bf topologically equivalent} if one can be obtained from the other (disregarding conductances) by electrical transformations. 
We say that they are {\bf electrically equivalent}
if their characteristic polynomials are the same (compare this definition with
the one for networks with boundary in section \ref{elecequiv}).

\begin{thm}\label{topequivtorus}
Two networks on a torus 
are topologically equivalent if and only if their Newton polygons
are equal.
\end{thm}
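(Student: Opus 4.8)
The plan is to prove both implications by passing through the medial strand picture, exploiting the fact that the Newton polygon is a purely topological invariant of the embedded graph.

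For the ``only if'' direction, I would first observe that the Newton polygon does not depend on the conductances at all. By (\ref{crsfform}) we may write $P=\sum_{(r,s)}C_{r,s}(2-z_1^iz_2^j-z_1^{-i}z_2^{-j})^k$, and each coefficient $C_{r,s}$ is a sum of products $\prod_e c_e$ over CRSFs whose cycles have total homology class $(r,s)$. Since the conductances are positive there is no cancellation, so $C_{r,s}\neq 0$ if and only if $\G$ carries a CRSF of that homology class. Hence the support of $P$, and so its Newton polygon, is determined by the combinatorics of $\G$ on the torus and nothing else. It then suffices to check that a single electrical transformation preserves the set of achievable CRSF homology classes. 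For the three reductions (dead branch or self-loop, series, parallel) this is a local bijection on CRSFs that multiplies $P$ by a positive constant (for instance a degree-one internal vertex is forced into every CRSF, giving $P_\G=c_eP_{\G\setminus v}$); for the Y--Delta move one checks the analogous local bijection. Since these factors are constants, and any overall monomial factor is harmless because the central symmetry $P(z_1,z_2)=P(z_1^{-1},z_2^{-1})$ pins the polygon at the origin, the Newton polygon is unchanged.

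For the ``if'' direction I would first invoke Theorem \ref{minimizable} to replace $\G$ and $\G'$ by topologically equivalent minimal networks, so that it remains to show that two minimal toral networks with the same Newton polygon are related by Y--Delta moves. The key object is the multiset of homology classes in $H_1(\T^2)=\Z^2$ of the medial strands, which for a minimal network are simple closed curves that lift to non-self-intersecting curves meeting pairwise at most once. The heart of the argument is a dictionary asserting that \emph{the Newton polygon is equivalent data to this multiset of medial homology classes}: going around the boundary of $N$, the primitive edge vectors are exactly the $90^\circ$ rotations $h^\perp$ of the medial strand homology classes $h$, with the lattice length of each edge equal to the number of parallel strands of that class. (One checks this on the $2\times2$ grid of Figure \ref{2X2torus}: the strands have classes $(1,\pm1)$ with multiplicity two, whose rotations $2(\mp1,1)$ are precisely the edges of the diamond Newton polygon read off from (\ref{Pexample}).) Since a centrally symmetric convex lattice polygon is reconstructed uniquely from its cyclically ordered edge vectors, equality of Newton polygons is equivalent to equality of the medial homology multisets.

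Finally I would argue that the homology multiset determines a minimal network up to Y--Delta moves, the toral analogue of Theorem \ref{topequiv}. For strands of distinct primitive classes $h_1,h_2$ the geometric intersection number is forced to be $|h_1\wedge h_2|$, and a family of simple closed curves realizing these minimal intersection numbers is unique up to isotopy of the torus; strands sharing a class form a ``packet'' of parallel curves. An isotopy carrying one minimal configuration to another decomposes into finitely many elementary moves: passing a strand across a transverse intersection of two others is exactly a Y--Delta move, while reordering or combing the strands within a packet is accomplished by the combing procedure from the proof of Theorem \ref{minimizable} (Figure \ref{river}), again a sequence of Y--Delta moves. Assembling these realizes the isotopy by electrical transformations, giving topological equivalence. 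The step I expect to be the main obstacle is precisely this last one: making rigorous that two minimal medial configurations with identical homology data are isotopic \emph{and} that the isotopy can be pushed through the cell structure so that every elementary move is a genuine Y--Delta move on the network, in particular controlling the winding when combing a parallel packet exactly as in Theorem \ref{minimizable}. The dictionary of the previous paragraph is the other delicate point, since one must establish the rotation-by-$90^\circ$ correspondence and the edge multiplicities in general, not merely in the worked example.
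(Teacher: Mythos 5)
Your overall architecture matches the paper's proof: invariance of $N$ under electrical transformations via CRSF bijections, reduction to minimal networks, a dictionary between the Newton polygon and the homology classes of the medial strands, and then geodesic isotopy plus combing to connect two minimal configurations by Y--Delta moves. However, your dictionary is stated incorrectly, and this is the one step of real content. The paper's lemma asserts that the side vectors of $N$ \emph{are} the strand homology classes $h$ themselves, not the rotations $h^\perp$. A counterexample to your version: take the $1\times 2$ grid on the torus (two vertices $u,v$, a loop of class $(1,0)$ at each, and two edges joining $u$ to $v$ whose union is a cycle of class $(0,1)$). With unit conductances,
\begin{equation*}
P(z_1,z_2)=(4-z_1-z_1^{-1})^2-(1+z_2)(1+z_2^{-1})=z_1^2+z_1^{-2}-8z_1-8z_1^{-1}-z_2-z_2^{-1}+16,
\end{equation*}
so $N$ is the convex hull of $(\pm2,0)$ and $(0,\pm1)$, with primitive side vectors $\pm(2,1)$ and $\pm(2,-1)$. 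The lift of this network is the full square grid $\Z^2$, whose medial strands are the diagonals; on this torus they close up into exactly two strands, of homology classes $(2,1)$ and $(2,-1)$ in the $(z_1,z_2)$ basis --- equal to the side vectors, whereas your prediction $h^\perp=(\mp1,2)$ gives vectors that are not sides of $N$ at all. Your verification on the $2\times 2$ grid is non-probative precisely because there the strand-class multiset $\{(1,1),(1,1),(1,-1),(1,-1)\}$ happens to be invariant under $90^\circ$ rotation, so the two dictionaries coincide.

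Since $h\mapsto h^\perp$ is a bijection of the lattice, the statement you actually use --- equal Newton polygons if and only if equal multisets of strand classes --- survives once the dictionary is corrected, so the error is repairable; but note that you flag the dictionary as ``delicate'' and never prove it, and it is exactly what the paper's lemma establishes. The paper's argument: orient the geodesic strands $S_1,\dots,S_k$ with classes $e_1,\dots,e_k$; each strand has $2k_i=\sum_j|e_i\wedge e_j|$ crossings, and any CRSF of total cycle class $\omega$ satisfies $|\omega\wedge e_i|\le k_i$. These strips have boundary lines \emph{parallel} to $e_i$ (the constraint is in the direction $e_i^\perp$, but the resulting polygon side is $e_i$ --- likely the source of your spurious rotation), and they cut out precisely the centrally symmetric polygon with side vectors $e_1,\dots,e_k,-e_1,\dots,-e_k$; extremality is realized by explicit CRSFs built by assigning weight $-\tfrac12$ to each oriented zig-zag edge of every strand, so that the weight-$1$ edges form vertex-disjoint cycles achieving each vertex of $N$. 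You would need to supply this or an equivalent argument. Your remaining step --- that the strand multiset determines the minimal network up to Y--Delta moves via flat-metric geodesics, triple-point moves, and combing of parallel packets as in Theorem \ref{minimizable} --- is exactly how the paper proceeds, so there your plan is sound, though no more detailed than the paper itself.
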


\begin{proof}
It is not hard to show that electrical transformations do not change $N$.
The simplest reason is because there is a bijection 
on the CRSFs of the ``before" and ``after"
networks, and the set of CRSFs determines $N$.

So it suffices to show that $N$ determines a minimal network up to 
electrical equivalence. Suppose $N$ has $2n$ integer boundary points
$v_0,\dots,v_{2n-1}$ in cclw order. Let $e_i$ be the edge joining $v_i$ to $v_{i+1}$;
it is antiparallel with $e_{n+i}$ because $P$ is centrally symmetric.
The proof now follows from the following lemma.
\end{proof}

\begin{lemma}
The strands of a minimal network 
are in bijection with, and have homology classes equal to,
the $n$ edge pairs $\{e_i,e_{i+n}\}$.
\end{lemma}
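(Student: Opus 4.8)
The plan is to prove the lemma in two stages: first use minimality to pin down the topology of the strands, and then match their homology classes to the boundary of the Newton polygon $N$ through the cycle-rooted spanning forest expansion (\ref{crsfform}).

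First I would record the topology. Since $\G$ has no nodes and lies on the closed torus, every medial strand is a closed loop. By the definition of minimality, when lifted to $\R^2$ no strand self-intersects and no two lifts meet more than once; hence each strand is an embedded simple closed curve on $\T$ and so carries a \emph{primitive} homology class $h\in H_1(\T)=\Z^2$ (the only nonzero classes realized by simple closed curves). Two strands of the same class are freely homotopic simple closed curves meeting at most once, hence are disjoint and parallel, while strands of different classes $h,h'$ cross exactly $|h\wedge h'|$ times. As in the proof of Theorem \ref{minimizable} I may straighten all strands to geodesics of the flat torus, so the arrangement becomes straight parallel families, $s_h$ lines for each occurring class $h$. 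Since the three isotopy moves and the Y--Delta move only slide strands across triple points, the multiset $\{h\}$ of strand classes, like $N$ itself, is unchanged by electrical transformations; I am therefore free to pass to any convenient minimal representative.

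Next I would translate into the geometry of $N$. By (\ref{crsfform}) every lattice point of $N$ with nonzero coefficient has the form $k\,(i,j)$ with $(i,j)$ primitive, all cycles of the CRSF being homologous, so $N$ is the convex hull of the vectors $m(i,j)\,(i,j)$, where $m(i,j)$ is the maximal number of disjoint cycles of class $(i,j)$. Central symmetry of $P$ forces $N$ centrally symmetric, so its $2n$ boundary lattice points split into $n$ antipodal unit-edge pairs $\{e_i,e_{i+n}\}$ with $e_{i+n}=-e_i$. The content of the lemma is the equality, for each primitive $h$,
\[
\#\{\text{strands of class } h\}\;=\;\bigl(\text{lattice length of the edge of } N \text{ with direction } h\bigr),
\]
together with the fact that the antipodal edges of directions $h$ and $-h$ are realized by the \emph{same} unoriented strands, each strand contributing its two orientations. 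Granting this, the bijection is immediate: assign to the $i$-th unit boundary segment the strand whose oriented class equals the edge vector $e_i$, so that $e_i$ and $e_{i+n}$ record the two orientations of one strand, and its homology class is exactly $\{e_i,e_{i+n}\}$ as claimed.

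I would establish the displayed equality by two inequalities, and I expect the upper bound to be the main obstacle. For the lower bound, the $s_h$ parallel geodesic strands of class $h$ cut $\T$ into $s_h$ parallel annular bands; combing each band as in Theorem \ref{minimizable} (equivalently, cutting along a transverse strand and invoking the annular ``string of loops'' normal form behind Theorem \ref{distinct}) exhibits enough cycles to force the edge of $N$ in direction $h$ to have length at least $s_h$. The subtlety on the other side is that this edge length is \emph{not} the packing number $m(h)$ (indeed $m(h)\ne s_h$ in general); it is the jump in the optimal homology as the supporting direction crosses $h^{\perp}$, a second-order feature of the packing function built from the packings of several different cycle classes that happen to line up collinearly. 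Showing this jump is exactly $s_h$, with no excess, is where minimality must be used in full: the plan is to cut $\T$ along the strands of class $h$, reducing each band to an annular characteristic polynomial governed by Theorem \ref{distinct}, and then to verify that the transverse strands contribute nothing to the edge in direction $h$, so that the $s_h$ annuli assemble into an edge of length precisely $s_h$. Controlling this interaction of the transverse families is the crux of the argument.
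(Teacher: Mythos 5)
Your first stage---straightening the strands to flat geodesics, observing that each strand carries a primitive class, that distinct classes $h,h'$ cross $|h\wedge h'|$ times, and that the lemma reduces to the identity (number of strands of class $h$) $=$ (lattice length of the boundary edge of $N$ parallel to $h$)---is exactly how the paper frames the problem. But from there the proposal stops short of a proof, and you say so yourself: both inequalities are left as plans, and the ``crux'' you flag is precisely the unproved content. The paper closes the gap with two concrete devices that your sketch lacks. The first is an edge count: since every edge of $\G$ is the crossing of exactly two strands, the strand $S_i$ of class $e_i$ consists of exactly $2k_i=\sum_j|e_i\wedge e_j|$ edges, and every CRSF class $\omega$ obeys the linear constraint $|\omega\wedge e_i|\le k_i$, because $\omega\wedge e_i$ computes the signed intersection of the CRSF with $S_i$. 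These constraints cut out exactly the centrally symmetric polygon $N_S$ with side vectors $\pm e_1,\dots,\pm e_k$: at its vertices $v_i=\frac12(e_1+\cdots+e_{i-1}-e_i-\cdots-e_k)$ one checks $2v_i\wedge e_i=\sum_j|e_j\wedge e_i|=2k_i$, so the constraints are tight there, and $N\subseteq N_S$ follows in one stroke. This single linear-programming bound handles all the transverse families simultaneously; no decomposition into $s_h$ annuli is needed, and in particular the appeal to Theorem \ref{distinct} (a statement about the roots of the \emph{one-variable} annular polynomial) gives no handle on the two-variable Newton polygon.

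The second device is the one your lower bound genuinely misses: an explicit construction of CRSFs realizing every vertex of $N_S$. Orienting the strands with increasing $y$-coordinate and putting weight $-\frac12$ on each oriented edge of every zig-zag path, each edge of $\G$ acquires weight $0$ or $\pm1$; geodesicity forces exactly one incoming and one outgoing weight-$1$ edge at each vertex, so the weight-$1$ edges form vertex-disjoint cycles of total homology class $v_1=\frac12(-e_1-\cdots-e_k)$, and reindexing/reorienting gives each $v_i$. Hence $N=N_S$ and the bijection of the lemma is tautological. Your sketch, by contrast, only proposes to exhibit disjoint cycles of class $h$, which shows at best that $m(h)\,h\in N$---it controls the extent of $N$ along the \emph{ray} through $h$, not the lattice length of the boundary edge of direction $h$, which (as you yourself note) is governed by the extreme values of the functional $\omega\mapsto\omega\wedge h$. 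To bound that edge from below one must produce two extreme CRSF classes differing by $s_h\,h$; the zig-zag weighting does exactly this (consecutive vertices of $N_S$ differ by $e_i$), whereas your annular-decomposition plan never constructs the extreme classes, so the ``jump across $h^{\perp}$'' you describe is named but never computed.
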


\begin{proof}
Since the network is minimal we can isotope the strands on the torus so that they
are geodesics for a Euclidean structure. Then we orient the strands so that
their $y$ coordinate is increasing, or if they have no $y$ component,
the $x$ component is increasing. 

Let $S_1,\dots,S_k$ be the strands, and let $e_i=(x_i,y_i)\in\Z^2$ 
be the homology class
of $S_i$.
The length $2k_i$ of $S_i$ 
(that is, the number of edges of the corresponding zig-zag path of $\G$)
is then determined by
$2k_i=\sum_j |e_i\wedge e_j|$ since exactly two strands cross at
each edge of $\G$.

Every strand gives a constraint on the homology class of a CRSF:
for any CRSF with homology class
$\omega$ we must have 
\be\label{constraints}|\omega\wedge e_i|\le k_i,\ee
since $|\omega\wedge e_i|$ is the intersection number of the CRSF
with the strand $S_i$. 

Suppse the $S_i$ are indexed in order of decreasing slope $x_i/y_i$.
Let $N_S$ be the convex polygon with side vectors 
$e_1,\dots,e_k,-e_1,\dots,-e_k$ in cclw order.
It has vertices $v_1,v_2,\dots,v_k,-v_1,-v_2,\dots,-v_k$ where
$$v_i=\frac12(e_1+\dots+e_{i-1}-e_i-\dots-e_k).$$
We claim that $N_S$ is the subset of $\omega\in\R^2$ which satisfy the constraints 
(\ref{constraints}).
To see this, note that if $v_i,v_{i+1}$ are consecutive vertices of $N_S$ and $e_i$
is the vector between them, then
\begin{eqnarray*}
2v_i\wedge e_i&=&e_1\wedge e_i+\dots+e_{i-1}\wedge e_i-e_{i+1}\wedge e_i-
\dots-e_k\wedge e_i\\
&=&\sum_j |e_j\wedge e_i|\\
&\le& 2k_i
\end{eqnarray*}
and similarly for $v_{i+1}$.
Thus $v_i,v_{i+1}$ satisfy the maximal possible constraint.

Thus $N\subset N_S$.
To see that $N=N_S$, is suffices to construct a CRSF with homology
class equal to each of the vertices $v_i$ of $N_S$.
To construct a set of cycles with homology class
$v_1=\frac12(-e_1-e_2-\dots-e_k)$, for example,
put weight $-1/2$ on each edge of the zig-zag path of $S_i$ (in the orientation
defined above).
Each edge of $\G$ will then have weight $0$ or $\pm1$;
since we assumed that the strands were drawn geodesically, at each vertex
there is exactly one incoming and one outgoing edge.
The oriented edges with weight $1$ form a set of vertex disjoint cycles
of the homology class $\omega$. 
\end{proof}

\subsection{Harnack curves and characterization of spectral curves}\label{Harnackdef}
The Laurent polynomial $P=\det\Delta$ has the following properties:
\begin{enumerate}
\item $P$ has real coefficients.
\item $P$ is symmetric: $P(z_1,z_2)=P(z_1^{-1},z_2^{-1})$.
\item $P(1,1)=0$.
\item $\{P=0\}$ is a Harnack curve.
\end{enumerate}

What we call Harnack curves were studied by Harnack and called simple Harnack curves
by Mikhalkin in \cite{M}.
The definition given by Mikhalkin in \cite{M} involves a topological condition on the real locus;
in \cite{MR} a more concise characterization was given:
a curve is Harnack if it intersects each torus
$\{|z|=c_1, |w|=c_2\}$ in at most two points (which are necessarily
complex conjugates if not real). In particular the map 
$(z,w)\mapsto(\log|z|,\log|w|)$ which maps the curve into $\R^2$
is generically $2$ to $1$. The image of $P=0$ under this
map is the {\bf amoeba} of $P$. See Figure \ref{amoeba} for the
amoeba of the example above.
\begin{figure}
\center{\includegraphics[width=6cm]{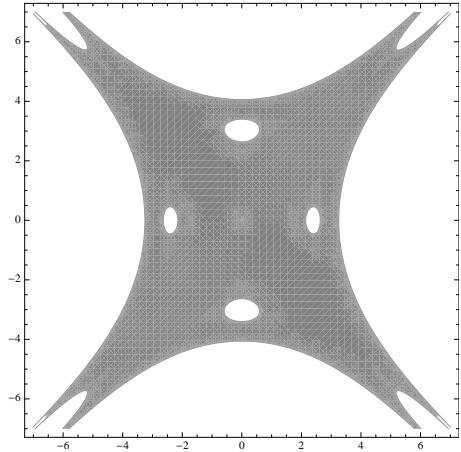}}
\caption{\label{amoeba}The amoeba (shaded) of the curve of Figure 
\protect{\ref{2X2torus}} 
above.
The boundary of the amoeba is the real locus of $P=0$; a model of $P=0$
can be obtained by gluing two copies of the shaded region together
along their common boundary.}
\end{figure}

Remarkably, the four properties above characterize the characteristic polynomials
of networks on a torus:
\begin{thm}\cite{KO,GK}\label{preimage}
For any polynomial $P$ satisfying the above properties there is a network 
$\G$ whose characteristic polynomial is $P$. The set of minimal networks 
with polynomial $P$, modulo electrical transformations, 
is a real torus of dimension
$g/2,$ where $g$ is the geometric genus (the number of real ovals of the amoeba of $P$).
\end{thm}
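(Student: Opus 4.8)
\noindent\emph{Proof proposal.}
The plan is to reduce the entire statement to the dimer model via Temperley's bijection and then import the known classification of dimer spectral curves. Recall (\cite{Temperley}, and in general form \cite{KPW,KS}) that to a network $\G$ on $\T^2$ one associates a bipartite ``Temperleyan'' graph $\G_T$ on $\T^2$ — with black vertices at the vertices and faces of $\G$ and white vertices at the edges of $\G$ — whose dimer covers are in weight-preserving bijection with the CRSFs counted in Theorem \ref{Formanthm}. Under this bijection the characteristic polynomial $P(z_1,z_2)=\det\Delta$ equals, up to an explicit monomial factor and the identification of the monodromies $z_1,z_2$, the Kasteleyn determinant of $\G_T$. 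My first task is to record how the four properties translate: property 1 is reality of the conductances; property 2 (central symmetry) corresponds to the central symmetry of the Kasteleyn data forced by $\Delta=d^*{\cal C}d$ with $d^*$ the conjugate-transpose of $d$; and property 3, $P(1,1)=0$, says constants lie in the kernel of the ordinary Laplacian. I would first observe that properties 2 and 3 together force $\{P=0\}$ to acquire a node at $(z_1,z_2)=(1,1)$, the toral analogue of the double root of Theorem \ref{distinct}.

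For the existence half I would invoke the realization theorem from the dimer side: by \cite{KO} every Harnack curve with prescribed Newton polygon $N$ is the spectral curve of a dimer model on some bipartite graph on $\T^2$, with the $g$ bounded complementary components of the amoeba being exactly the ovals. The work is then to \emph{descend} this realization to a resistor network, i.e.\ to show the realizing bipartite graph may be taken Temperleyan and the weights chosen so that the central symmetry is implemented by \emph{real positive} conductances. Here I would use the minimization already established (Theorem \ref{minimizable}) together with the medial-strand/Newton-polygon dictionary of the lemma following Theorem \ref{topequivtorus}: the Newton polygon of a symmetric Harnack $P$ determines, through its edge vectors $\{e_i,e_{i+n}\}$, the strand homology classes of a minimal network, and hence fixes the combinatorial type of $\G$ (and of $\G_T$) up to electrical transformations. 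Matching the continuous data then reduces to solving for conductances, which is where the moduli description enters.

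For the moduli half I would use the parametrization of \cite{KOS}: dimer models with a fixed spectral curve $P$ are naturally parametrized by a real positive component of the Jacobian of $\{P=0\}$, a real torus whose dimension equals the geometric genus. The central symmetry endows the curve with the holomorphic involution $\iota\colon(z_1,z_2)\mapsto(z_1^{-1},z_2^{-1})$; resistor networks correspond exactly to the line bundles compatible with $\iota$ and the real structure, and by the cluster/integrable-systems picture of \cite{GK} this symmetric locus is the \emph{Lagrangian} subvariety of the dimer phase space, hence half-dimensional. Combined with the node at $(1,1)$ — which pairs the interior lattice points of the centrally symmetric $N$ as $\{(0,0)\}\cup\{\pm(i,j)\}$ and thereby shows $g$ is even — this yields a real torus of dimension exactly $g/2$.

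The main obstacle is the interface between the last two steps: showing that the resistor sublocus is cut out of the $g$-dimensional dimer Jacobian as a genuine half-dimensional torus, and not merely contained in one. Concretely, one must verify that $\iota$ together with complex conjugation acts on $H^1$ of the spectral curve so that the invariant (resistor-compatible) part has real dimension $g/2$, and — the more delicate point — that every such invariant line bundle is actually realized by \emph{positive} conductances on a Temperleyan graph, so that the Harnack/positivity and Temperleyan constraints are simultaneously satisfiable. This positivity-plus-symmetry surjectivity, rather than the bare dimension count, is what makes the theorem genuinely depend on the fine structure of Harnack curves developed in \cite{KO,GK}, and is the step I expect to require the most care.
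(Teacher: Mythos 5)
The paper does not actually prove Theorem \ref{preimage}: it is stated as an imported result with the citations \cite{KO,GK}, so there is no internal argument to compare against, and your proposal has to be measured against the proofs in those references. On that score your outline reconstructs their strategy faithfully: pass to the dimer model via a Temperley-type correspondence \cite{Temperley,KPW,KS}, use the Harnack-curve realization and the oval-torus parametrization of dimer models with fixed spectral curve from \cite{KO,KOS}, then cut down to the resistor-network locus using the involution $\iota(z_1,z_2)=(z_1^{-1},z_2^{-1})$ and the Lagrangian picture of \cite{GK}. Several of your local observations are correct and worth keeping: differentiating $P(z_1,z_2)=P(z_1^{-1},z_2^{-1})$ at $(1,1)$ together with $P(1,1)=0$ does force $\nabla P(1,1)=0$, hence a node at $(1,1)$ (the toral analogue of the double root in Theorem \ref{distinct}); and your lattice-point pairing correctly shows the geometric genus $g$ is even, consistent with the $g/2$ in the statement.

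The genuine gap is that at both decisive points your writeup defers to exactly the results being proven. First, existence: \cite{KO} realizes Harnack curves by \emph{dimer} models, and the descent to a network --- showing that for a symmetric Harnack $P$ the realizing bipartite weights can be gauged to \emph{positive real} conductances on a minimal toral network whose strand homology classes are dictated by the Newton polygon (the lemma after Theorem \ref{topequivtorus} constrains the combinatorics but says nothing about weights) --- is the substance of the network half of the theorem and is not supplied. Second, moduli: ``Lagrangian, hence half-dimensional'' is a statement about the ambient phase space, not about the fiber of the spectral transform over a fixed curve; a half-dimensional subvariety can meet a $g$-dimensional fiber in anything from a point to the whole fiber. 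To get a $g/2$-torus one must analyze how $\iota$ (combined with the real structure) acts on the ovals --- e.g.\ show the central symmetry of the amoeba pairs the ovals freely, or handle $\iota$-fixed ovals --- identify the $\iota$-invariant degree-$g$ divisors with one point per oval as a torus of dimension exactly $g/2$, and then prove this invariant locus is entirely realized by positive conductances. Evenness of $g$ is necessary but nowhere near sufficient for this. You flag both points yourself, which is honest, but it means the proposal is a roadmap into \cite{KO,GK} rather than an independent proof: the steps you leave open are precisely the content of the cited theorems.
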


\section{Other surfaces}

For surfaces other than the annulus and torus,
the fundamental group will be nonabelian so 
it is appropriate to use an $\SL$-connection on $\G$ rather than a $\C^*$-connection.

Although we understand very little about this situation,
there is one 
positive result. Considering the Laplacian determinant 
as a function on the moduli space of flat
$\SL$-connections, it can be written in a unique way as a sum over CRSFs
according to the homotopy types of their cycles.
Let us explain this. We define a {\bf finite lamination} on $\Sigma$ to be an
isotopy class of finite 
pairwise disjoint collection of simple closed curves, none bounding
a disk. For example on an annulus there is a finite lamination 
for each nonnegative integer $k$, $k$ being
the number of its cycles. 

The cycles in a CRSF form a finite lamination. 
One can ask the question: for a given finite lamination $\cal L$,
what is the weighted sum of CRSFs having cycle set of type ${\cal L}$?
We have
\begin{thm}[\cite{Kenyon.bundle,FG}]\label{generalCRSF}
$P=\det\Delta$ can be written as
$$P=\sum_{\cal L}C_{\cal L}$$
where the sum is over all finite laminations and
$C_{\cal L}$ is the weighted sum (partition sum) of CRSFs
of cycle type ${\cal L}$. The coefficients $C_{\cal L}$ are functions of $P$ only,
and can be extracted
via an appropriate integration of $P$ over the representation variety 
${\text Hom}(\pi_1(\Sigma),\SL)$.
\end{thm}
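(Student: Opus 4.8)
The plan is to start from the quaternion-determinant expansion of the Laplacian and then decompose it according to the topology of the cycles. Interpreting $\det\Delta$ through its square root $\Qdet\Delta=(\det\Delta)^{1/2}$, which is the object for which Theorem \ref{CRSFthm} provides a CRSF expansion, we have
$$P=\Qdet\Delta=\sum_{\text{CRSFs }T}\Big(\prod_{e\in T}c_e\Big)\prod_{\text{cycles }\gamma\text{ of }T}\big(2-\Tr w_\gamma\big),$$
where $w_\gamma=\rho(\gamma)\in\SL$ is the monodromy of the flat connection $\rho$ around the cycle $\gamma$. Because the connection is flat, a contractible cycle has trivial monodromy and contributes a factor $2-\Tr(I)=0$; hence every surviving CRSF has all of its cycles essential, and (being vertex-disjoint simple closed curves) their isotopy classes form a finite lamination $\cal L$. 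Grouping the sum by this lamination type gives $P=\sum_{\cal L}C_{\cal L}\,\phi_{\cal L}$, where $\phi_{\cal L}(\rho)=\prod_{\gamma\in\cal L}(2-\Tr\rho(\gamma))$ depends only on the conjugacy classes $[\gamma]$ (so only on $\cal L$), since $\Tr$ is invariant under conjugation, orientation, and basepoint, and where $C_{\cal L}=\sum_{T:\,\mathrm{type}(T)=\cal L}\prod_{e\in T}c_e$ is the purely combinatorial partition sum, the monodromy factors having been collected into $\phi_{\cal L}$. This already establishes the displayed decomposition; what remains is to show the scalars $C_{\cal L}$ are recoverable from the function $P$ alone.

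For the extraction I would restrict $\rho$ to the compact real form, i.e.\ to the moduli space $\text{Hom}(\pi_1(\Sigma),\SU)/\text{conj}$, which carries a natural finite measure (the Atiyah--Bott--Goldman symplectic volume, equivalently the pushforward of Haar measure); since $P$ is regular on the whole representation variety it is determined by this restriction. The goal is to produce a family of test functions $\{\psi_{\cal L}\}$ dual to $\{\phi_{\cal L}\}$ under the resulting $L^2$ pairing, so that
$$C_{\cal L}=\int P\,\overline{\psi_{\cal L}}.$$
The single-curve case is the model: writing $\Tr\rho(\gamma)=2\cos\theta_\gamma$, the quantities $2-\Tr\rho(\gamma)=4\sin^2(\theta_\gamma/2)$ and their powers expand in the $\SU$ characters $\chi_n$, which are orthonormal for the Weyl measure $\tfrac{2}{\pi}\sin^2\theta\,d\theta$; this is exactly the Chebyshev substitution used in the annulus computation around equation (\ref{lapdetann}) and in the cylinder example. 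For a multicurve one wants the pairing $\int\phi_{\cal L}\,\overline{\phi_{\cal L'}}$ to factor over the disjoint components, which is the cutting/gluing behaviour of the $\SU$ (Reshetikhin--Turaev / Jones--Wenzl) inner product on the skein module; combined with single-curve orthogonality this makes the Gram matrix of the $\phi_{\cal L}$ essentially diagonal and invertible, so the $C_{\cal L}$ follow by solving a finite linear system. This realizes the integration over the representation variety asserted in the statement.

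The genuinely hard step is the linear independence and orthogonality of the lamination functions $\phi_{\cal L}$ on the character variety, which is the skein-theoretic input underlying the whole argument. One wants to invoke that simple multicurves form a basis of the Kauffman bracket skein module and that at the special value $A=-1$ the skein algebra maps onto the coordinate ring of the $\SL$-character variety, so that distinct laminations give independent functions; the measure-theoretic refinement giving genuine orthogonality is the content of \cite{FG} and \cite{Kenyon.bundle}. The main obstacle is that the monodromies around disjoint curves are \emph{not} independent as elements of $\SL$, since they satisfy the relations of $\pi_1(\Sigma)$, so the factorization of $\int\phi_{\cal L}\,\overline{\phi_{\cal L'}}$ cannot be asserted naively; it must be extracted from the surgery/gluing axioms of the associated TQFT, and one must further control the sign and nilpotent subtleties that appear at $A=-1$. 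Once orthogonality is in hand, invertibility of the Gram matrix exhibits each $C_{\cal L}$ as an explicit integral of $P$, completing the proof.
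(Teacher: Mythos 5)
You should first be aware that the paper contains no proof of Theorem \ref{generalCRSF}: it is quoted from \cite{Kenyon.bundle} and \cite{FG}, and the only supporting material in the text is the pair-of-pants example, where $\pi_1(\Sigma)$ is free on two generators, the three functions $2-\Tr A$, $2-\Tr B$, $2-\Tr(AB)$ are \emph{free} coordinates on the character variety (Fricke), and extraction of the coefficients is literal reading of a polynomial in $X,Y,Z$. Measured against that, the first half of your argument is exactly the intended decomposition and is correct: expand via Theorem \ref{CRSFthm}, use flatness to annihilate every CRSF with a contractible cycle through the factor $2-\Tr(I)=0$, note that the surviving cycles are disjoint essential simple closed curves forming a finite lamination, and group by isotopy type using conjugation-, orientation- and basepoint-invariance of the trace. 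Two bookkeeping remarks: the paper writes $P=\det\Delta$ while the CRSF expansion is for $\Qdet\Delta=(\det\Delta)^{1/2}$ up to sign, so your silent substitution is the right reading of the statement; and in the paper's formulation $C_{\cal L}$ denotes the partition sum \emph{including} the monodromy factor, i.e.\ your $C_{\cal L}\,\phi_{\cal L}$ --- your normalization separating the scalar from the function is equivalent and cleaner.

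Your extraction step is a genuinely different route from the coordinate-reading of the paper's example, and it is heavier than it needs to be. Since the graph is finite, only finitely many laminations occur, so all the theorem requires is that the finitely many functions $\phi_{\cal L}$ be linearly independent on the representation variety; this is precisely the multicurve-basis theorem for the skein algebra at $A=-1$ (Bullock, Przytycki--Sikora; the nilradical is trivial, so the sign/nilpotent worries you raise are settled there), equivalently the canonical-basis statement of \cite{FG}, which you correctly invoke. Given linear independence, \emph{any} measure with Zariski-dense support --- e.g.\ the pushforward of Haar measure on the $\SU$ locus, whose density is immediate when $\pi_1(\Sigma)$ is free, the relevant case for surfaces with boundary, though it needs a citation for closed surfaces --- makes the finite Gram matrix positive definite, and inverting it exhibits each $C_{\cal L}$ as an integral of $P$. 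This is where your writeup has its one real soft spot: you insist on genuine orthogonality of the $\phi_{\cal L}$ and on TQFT cut-and-paste factorization of the pairing $\int\phi_{\cal L}\,\overline{\phi_{{\cal L}'}}$, and, as you yourself concede, the monodromies of disjoint curves are not independent and the pairing does not factor --- but none of that is needed. Replace ``orthogonal'' by ``linearly independent, hence invertible Gram matrix'' throughout and your argument closes, outsourcing exactly the same skein-theoretic input that the paper outsources to \cite{Kenyon.bundle,FG}.
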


As an example, take $\Sigma$ to be a pair of pants, that is,
a sphere minus three disks. Let $\G$ be a network embedded on $\Sigma$.
Take a flat $\SL$-connection on $\G$
with monodromy $A,B,C$ around the 
three boundary holes, where $ABC=1$. 

Then 
$$P=\det\Delta=\sum_{i,j,k\ge0}c_{i,j,k}(2-\Tr A)^i(2-\Tr B)^j(2-\Tr(AB))^k.$$
Given variables $X,Y,Z$ one can choose matrices $A,B$ in $\SL$ such that
$2-\Tr A=X, 2-\Tr B=Y,2-\Tr AB=Z$. 
Then 
$$P=P(X,Y,Z)=\sum_{i,j,k\ge0}c_{i,j,k}X^iY^jZ^k.$$
One can extract from this expression the coefficients $c_{i,j,k}$.

\old{
\begin{conj}
The topological equivalence class and conductances
of a minimal network on a surface can be recovered from
$\det\Delta$ as a function on the moduli space of flat 
$\SL$-connections on $\Sigma$. 
\end{conj}
}

\end{document}